\DeclarePairedDelimiter{\abs}{\lvert}{\rvert}
\DeclarePairedDelimiter{\norm}{\lVert}{\rVert}
\DeclarePairedDelimiter{\abss}{\bigg\lvert}{\bigg\rvert}
\DeclarePairedDelimiter{\prt}{(}{)}
\DeclarePairedDelimiter{\brk}{[}{]}
\theoremstyle{definition}
\newtheorem{rmk}{Remark}[section]
\newtheorem{definition}{Definition}[section]
\theoremstyle{plain}
\newtheorem{theorem}{Theorem}[section] 
\newtheorem{corollary}{Corollary}[section]
\newtheorem{lemma}{Lemma}[section]
\newtheorem{prop}{Proposition}[section] 
\renewcommand{\i}{_{i}}
\renewcommand{\j}{_{j}}
\renewcommand{\ij}{_{ij}}
\newcommand{\ii}{_{ii}}
\renewcommand{\k}{^{k}}
\newcommand{\ep}{^{\varepsilon}}
\newcommand{\epn}{^{\varepsilon,(n)}}
\newcommand{\de}{^{\delta}}
\newcommand{\ai}{{\alpha_i}}
\newcommand{\fori}{{i=1,\ldots, N}}
\newcommand{\R}{\mathbb{R}}
\newcommand{\N}{\mathbb{N}}
\newcommand{\W}{\mathcal{W}}
\newcommand{\T}{\mathcal{T}}
\newcommand{\ninf}{{L^\infty}}
\newcommand{\ro}{{\rho_1}}
\newcommand{\f}{\mathbf{f}}
\newcommand{\g}{\mathbf{g}}
\newcommand{\E}{\mathbf{E}}
\newcommand{\D}{\mathbf{D}}
\renewcommand{\ro}{\boldsymbol{\rho}}
\newcommand{\calE}{\mathcal E}
\newcommand{\calF}{\mathcal F}
\newcommand{\calM}{\mathcal M}
\newcommand{\calP}{\mathcal P}
\newcommand{\mc}{\mathcal C}
\newcommand{\e}{\varepsilon}
\newcommand{\intr}{\int_{\R^d}}
\newcommand{\intdd}{{\iint_{\R^d \times \R^d}}}
\newcommand{\Rdd}{{\R^d \times \R^d}}
\definecolor{green}{rgb}{0.01, 0.75, 0.24}
\keywords{Kinetic equations, aggregation phenomena, singular potentials, small inertia limit, measure solutions} 
\subjclass[2010]{35A01,35A21,82C40,35Q70}
\begin{document}
\author{Young-Pil Choi \and Simone Fagioli \and Valeria Iorio}
\address{Young-Pil Choi - Department of Mathematics,
Yonsei University, 50 Yonsei-Ro, Seodaemun-Gu, Seoul 03722, Republic of Korea}
\email{ypchoi@yonsei.ac.kr}
\address{Simone Fagioli - DISIM - Department of Information Engineering, Computer Science and Mathematics, University of L'Aquila, Via Vetoio 1 (Coppito)
67100 L'Aquila (AQ) - Italy}
\email{simone.fagioli@univaq.it}
\address{ Valeria Iorio - DISIM - Department of Information Engineering, Computer Science and Mathematics, University of L'Aquila, Via Vetoio 1 (Coppito)
67100 L'Aquila (AQ) - Italy}
\email{valeria.iorio1@univaq.it}

\title[Small inertia limit for coupled kinetic swarming
models]{Small inertia limit for coupled kinetic swarming
models}
\date{}

\begin{abstract}
We investigate various versions of multi-dimensional systems involving many species, modeling aggregation phenomena through nonlocal interaction terms. We establish a rigorous connection between kinetic and macroscopic descriptions by considering the small-inertia limit at the kinetic level. The results are proven either under smoothness assumptions on all interaction kernels or under singular assumptions for \emph{self-interaction} potentials. Utilizing different techniques in the two cases, we demonstrate the existence of a solution to the kinetic system, provide uniform estimates with respect to the inertia parameter, and show convergence towards the corresponding macroscopic system as the inertia approaches zero.

% Taking the associated characteristic system, we provide some a priori estimates. Then we prove existence and uniqueness of a measure solution to the kinetic system in the space of probability measures with finite first moment endowed with the $1$-Wasserstein distance.
% The main result concerns the small inertia limit: we prove that the solution to the kinetic system converges towards a solution to the corresponding first order macroscopic system as the inertia vanishes. Then we deal with a multi-dimensional system with many species subject to smooth cross-potentials and singular self-potentials, considering in addition an inertial effect.
% Once we introduce the mesoscopic model we want to study, we perturb the self-potentials in order to switch to a regularised system. After providing some uniform estimates with respect to the perturbation, we prove existence of weak solutions to the kinetic system. We also prove that there exist some classical solutions to the corresponding macroscopic model. Then we show rigorously that a solution to the kinetic system converges towards a solution to the corresponding macroscopic system as the inertia goes to zero.
\end{abstract}
\maketitle
  \tableofcontents
\section{Introduction}\label{sec:intro}
This paper aims to investigate the connections among different descriptions of multispecies aggregation phenomena, focusing specifically on nonlocal systems of partial differential equations. The main objective is to study the following first order macroscopic system
\begin{equation}
\label{eq:mainsystem} 
\begin{dcases}
\partial_t \rho\i + \nabla \cdot \prt{\rho\i v\i }=0, \\ 
v\i = - \sum_{j=1}^N \nabla K\ij \ast \rho\j,
\end{dcases}
\end{equation}
for $i=1,\ldots, N$, where $N$ is the number of species, $\rho\i (t,x)$ is a function modelling the $i$-th species density and $K_{ij}$ are given space-dependent interaction potentials modelling interaction between species. Interactions between agents of the same species are modelled by the $K_{ii}$ potentials that are called \textit{self-interaction} kernels whereas \textit{cross-interaction} kernels $K\ij$ describe the interactions of individuals of different species. 

System \eqref{eq:mainsystem} admits a discrete counterpart constructed as follows: consider $M$ particles for each species and let $z\i\k$, $k=1,\ldots,M$, be the locations of $M$ particles of the $i$-th species, for $\fori$. Denoting by $u\i\k$ the velocities of $z\i\k$, the dynamics of $z\i\k$ is determined by the first order ODE system
\begin{equation}
\label{eq:particlesystem}
\begin{dcases}
\frac{d z\i\k}{dt}=u\i\k, \\
u\i\k = -\frac{1}{M} \sum_{j=1}^N \sum_{h=1}^M \nabla K\ij \prt{z\i\k-z\j^h},
\end{dcases}
\end{equation}
for $\fori$, $k=1,\ldots,M$, where $K\ij$ are the same kernels as in \eqref{eq:mainsystem}. System \eqref{eq:particlesystem} can be also derived as \textit{small inertia limit} of the second order system
\begin{equation}
\label{eq:particlesystemeps}
\begin{dcases}
\frac{d}{dt}z\i\k = u\i\k, \\
\varepsilon \frac{d}{dt} u\i\k=-u\i\k-\frac{1}{M}\sum_{j=1}^N \sum_{h=1}^M \nabla K\ij \prt{ z\i\k-z\j^h} ,
\end{dcases}
\end{equation}
with $\varepsilon >0$, $i=1,\ldots , N$, $k=1,\ldots,M$, see \cite{bodnar}. 
In \eqref{eq:particlesystemeps}, $\varepsilon >0$ represents a small \textit{inertia} time of individuals. In system \eqref{eq:particlesystem}, it is assumed that the $\varepsilon$-terms in \eqref{eq:particlesystemeps} are negligible, but this choice is quite restrictive in many cases since in this way a ``reaction'' time is not taking into account and velocities change instantaneously.

Taking the formal limit as the number of particles increases to infinity, namely $M\to\infty$, we can associate to \eqref{eq:particlesystemeps} the kinetic system 
\begin{equation}
\label{eq:kinetic} 
\partial _t f\i+v\cdot \nabla _x f\i=\frac{1}{\varepsilon}\nabla_v\cdot (v f\i )+\frac{1}{\varepsilon}\nabla_v\cdot \bigg( \bigg(\sum_{j=1}^N \nabla K\ij \ast \rho\j \bigg) f\i \bigg),  
\end{equation}
for $\fori$, where $f\i(t,x,v)$ is the mesoscopic density of the $i$-th species at position $x\in \R^d$ with velocity $v\in\R^d$, and $\rho\i (t,x)$ is the associated macroscopic population density, i.e.,
\[
\rho\i(t,x)=\int_{\R^d} f\i(t,x,v)\,dv.
\]

The main goal of the present paper is to investigate the small inertia limit at the continuum level. In particular, we want to study the $\varepsilon\to 0$ limit in \eqref{eq:kinetic} and prove that it converges towards the first order PDEs model \eqref{eq:mainsystem}.

In recent decades, systems such as \eqref{eq:mainsystem} and \eqref{eq:kinetic} have been extensively employed to provide a biologically relevant representation of aggregative phenomena in population dynamics, particularly in the context of \emph{swarming} phenomena (see \cite{boi, mogilner, okubo, topaz}). Common interaction potentials in these scenarios include the attractive \emph{Morse} potential $G(x)=-e^{-|x|}$, attractive-repulsive Morse potentials $G(x)=-C_a e^{-|x|/l_a} + C_r e^{-|x|/l_r}$ (where $l_a$ and $l_r$ represent scales for the ``attractive range'' and the ``repulsive range'' respectively), combinations of Gaussian potentials $G(x)=-C_a e^{-|x|^2/l_a} + C_r e^{-|x|^2/l_r}$, or characteristic functions of a set $G(x)=\alpha \chi_{A}(x)$. Considerable attention is directed towards aggregation systems with singular kernels, particularly at the mesoscopic level. 
%In \cite{choi_jeong_kinetic}, the Vlasov--Manev--Fokker--Planck system in dimension $3$ is studied, featuring a gravitational potential of the form $-1/r-1/r^2$. The paper establishes the existence of weak solutions under suitable initial data.

A notable mathematical characteristic of these models that has drawn attention is the finite-time  \emph{blow-up} of solutions. Numerous contributions have been made in the literature for the one-species version of \eqref{eq:mainsystem}, as seen in \cite{bertozzi1, bertozzi3, bertozzi2, bertozzi4, budif, CDFLS, choi_jeong_fractional, litoscani}. Notably, in \cite{choi_jeong_fractional} the one-species case of  \eqref{eq:mainsystem} with
\[
v=-\nabla K\ast \rho= c_K \Lambda^{\alpha-d}\nabla \rho,
\]
where $c_K\in\R$ and $-2\leq\alpha-d\leq0$ are parameters and $\Lambda^s$ is the $s-$fractional power of $\Lambda \coloneqq (-\Delta)^{\frac{1}{2}}$, to be defined precisely below, was studied  establishing the local-in-time existence and uniqueness of classical solutions.

Inspired by the results in the single-species case, an existence theory was developed for the system \eqref{eq:mainsystem} in \cite{difrafag}. In the case when the system presents a \emph{symmetry}, namely $K_{ij} = K_{ji}$ for $i\neq j$ and $i,j=1,\dots,N$, then system \eqref{eq:mainsystem} exhibits a formal gradient flow structure:
\[
\partial_t \rho_i = \nabla \cdot \bigg(\rho_i \nabla \frac{\delta \calF(\rho)}{\delta\rho_i}\bigg),
\]
for $\fori$, where $\calF(\rho)$ is a free energy given by
\[
\calF(\rho) = \sum_{i,j=1}^N \intr \rho_i K_{ij} *\rho_j\,dx.
\]
In \cite{difrafag}, the authors proved that the theory of Gradient Flows in Wasserstein spaces developed in \cite{ags, CDFLS} can be extended to systems under mildly singular assumptions on all kernels $K_{ij}$, i.e., Morse-type singularity. When the symmetry property is lost, the existence of weak-measure solutions is developed in \cite{difrafag} using a semi-implicit version of the \emph{JKO-scheme}, originally introduced in \cite{jko}. Existence can be proved under mildly singular assumptions on the self-interaction potentials and smoothness assumptions on the cross-interaction potentials. To the best of the authors' knowledge, no results in the literature cover the case of singular non-symmetric cross-interaction kernels or self-interaction kernels under more singular assumptions, similar to the ones in \cite{choi_jeong_fractional}.

Focusing on the system \eqref{eq:kinetic}, the kinetic approach is widely employed in investigating aggregation phenomena. In \cite{ccr}, the single-species version of equation \eqref{eq:kinetic} is examined, considering both a self-propulsion term and a friction term. The former influences individuals independently of others, while the latter introduces a velocity-averaging effect, compelling agents to adjust their velocities based on nearby agents. The paper presents results on well-posedness, existence, uniqueness, and continuous dependence in the space of probability measures $\mathcal{P}_1(\R^d)$ equipped with the Monge--Kantorovich--Rubinstein distance. Additionally, the corresponding microscopic system is explored, and a convergence result from the particle system to the kinetic equation is established. The existence of smooth solutions is addressed using the classical framework for Vlasov-type equations, as outlined in \cite{glassey}. We refer to \cite{iacobelli} and references therein for recent treatments of the Vlasov--Poisson equation. Let us also mention the contribution in \cite{carrillo_choi_peng} to the theory of the Vlasov--Poisson--Fokker--Planck system.

In \cite{fs}, equation \eqref{eq:kinetic} is further investigated in the one-species case in a multi-dimensional space, accounting for inertial effects. Assuming smooth conditions on the kernel and applying the theory developed in \cite{ccr}, the paper proves existence and uniqueness results in the sense of measures. A small inertia limit is also examined, providing convergence to the corresponding macroscopic system.

Starting from the seminal work \cite{kramers}, several contributions have appeared in the literature in the study of the limit from \eqref{eq:kinetic} to \eqref{eq:mainsystem} in its one species version; see \cite{freidlin,hott_volpe,narita1994asymptotic}. In \cite{Duong_2018,doung2017}, variational techniques were introduced to study the rigorous limit from the Vlasov--Fokker--Planck equation to the corresponding macroscopic equation under suitable regularity and integrability assumptions on the interaction potential. Finally, in \cite{Goudon2005, pousol00}, qualitative analysis of the \emph{overdamped} limit from the Vlasov--Poisson--Fokker--Planck system towards the drift-diffusion equation, with interaction kernels given as attractive or repulsive Coulomb potential, has been conducted. More recently, a rigorous \emph{quantified overdamped limits} for the Vlasov--Fokker--Planck equation with smooth and singular nonlocal forces have been established in \cite{RoSuXi21} and \cite{choi_tse}, respectively.

The rest of this paper is organised as follows. In Section \ref{sec:pre}, we present the collection of assumptions and the statements of the main theorems of the paper, namely the small inertia limits in the smooth case in Theorem \ref{th:kinetic_limit} and for singular self-interaction kernels in Theorem \ref{thm_ktoc}. Section \ref{sec:smooth} is devoted to the proof of Theorem \ref{th:kinetic_limit}. The existence of solutions to system \eqref{eq:kinetic} is first proved by applying the method of characteristic. Uniform in $\varepsilon$ estimates and convergence to solutions to \eqref{eq:mainsystem} is then proved by adapting to systems  the results in \cite{fs}. We then provide the proof of the main result of the paper that is Theorem \ref{thm_ktoc} in Section \ref{sec:singular_model}. A regularisation procedure yields the existence of solutions to system \eqref{eq:kinetic} in the case of singular self-interaction kernels. Suitable a-priori estimates allow us to prove the existence and uniqueness of classical solution to \eqref{eq:mainsystem} under the singular setting by extending the one-species result
in \cite{choi_jeong_fractional}. Finally Theorem \ref{thm_ktoc} is proved by using the modulated energy estimate technique. 

\section{Preliminaries and main results}\label{sec:pre}
\subsection{Preliminaries and assumptions}
\label{sec:kinetic_smooth_preliminaries}

Let $\mathcal{P}_1 (\R^d)$ be the set of probability measures with finite first moment, i.e.,
\[ 
\mathcal{P}_1(\R^d)=\bigg\{ f\in \mathcal{P}(\R^d) \, :\, \int_{\R^d} \abs{x}f(x)\,dx<\infty\bigg\}. 
\]
We equip $\mathcal{P}_1 (\R^d)$ with the $1$-Wasserstein distance defined as (cf.\ \cite{villani})
\[
W_1 (\mu, \nu)= \inf_{\gamma \in \Pi (\mu,\nu)} \left\{ \iint_{\R^d \times \R^d} \abs{x-y} \,d \gamma (x,y) \right\}
\]
for all $\mu, \ \nu \in \mathcal{P}_1 (\R^d)$, where $\gamma\in \mathcal{P}(\R^d\times\R^d)$ is a probability measure on the product space $\R^d \times \R^d$ with marginals $\mu$ and $\nu$, respectively. We also introduce the set of probability measures with compact support, that is
\[
\mathcal{P}_c(\R^d) = \bigg\{ f\in \mathcal{P}(\R^d) \, : \, f \, \mbox{has compact support} \bigg\}.
\]
Since we deal with $N$ interacting species, the measure space we consider is $( \mathcal{P}_1(\R^d)^N, \mathcal{W}_1)$, where $\mathcal{W}_1$ is the $1$-Wasserstein distance on $\mathcal{P}_1(\R^d)^N$ defined below. In order to fix the notation, we write
\[
\f=\prt{f\i}_{i=1}^N\in \mathcal{P}_1 \prt{\R^d}^N 
\]
to denote a $N$-tuple of probability measures in the product space $\mathcal{P}_1 \prt{\R^d}^N $.

\begin{definition}(1-Wasserstein distance) 
\label{def:W1}
Let $\f=\prt{f\i}_{i=1}^N, \, \g=\prt{g\i}_{i=1}^N\in \mathcal{P}_1 \prt{\R^d}^N$ . The \textit{$1$-Wasserstein distance between $\f$ and $\g$} is defined as
\[ \mathcal{W}_1(\f, \g)\coloneqq\sup_{t\in[0,T]}\big[ W_1(f_1,g_1)+ \cdots +W_1(f_N,g_N) \big]. \]
\end{definition}

Given $Q$ a generic potential involved in systems \eqref{eq:mainsystem} and \eqref{eq:kinetic}, we will call the potential \emph{smooth} if it satisfies the following assumption
\begin{equation}
\label{pot}
\tag{\textbf{Pot}}
Q \in \mc^2 (\R^d) \quad \mbox{and} \quad \nabla Q \in W^{1,\infty} (\R^d).  
\end{equation}  

\begin{rmk}[Lipschitz constant] Denote by $B_R$ a closed ball in $\R^d$ centered in $0$ and with radius $R>0$. We denote by $Lip_R(Q)$ the Lipschitz constant of $Q$ in the ball $B_R\subset\R^d$. If $Q$ depends also on time, i.e., $Q : [0,T]\times\R^d\to\R^n,$ $Q=Q(t,x)$, we write $Lip_R(Q)$ to denote the Lipschitz constant of $Q$ with respect to $x$ in the ball $B_R\subset\R^d$, that is the smallest constant such that
\[ \abs{Q(t,x)-Q(t,y)}\leq Lip_R(Q)\abs{x-y}, \]
for all $x,y\in B_R$, and for all $t\in [0,T]$.
\end{rmk}

    In the proposition below, we state a property on the convergence of measures, see \cite{villani} for more details.
\begin{prop}
\label{prop:convergence_properties} Let $(X,d)$ be a complete and separable metric space. Let $\mu_n$ be a sequence of probability measures in $\mathcal{P}_1(X)$, and let $\mu \in \mathcal{P}_1(X)$. Then, the following are equivalent:
\begin{itemize}
    \item[1.] $W_1 ( \mu_n, \mu)\to 0$ as $n\to \infty.$
    \item[2.] $\mu_n \to \mu$ in the weak sense as $n\to\infty$ and the following tightness condition holds: for any $x_0 \in X$, 
    \[ \lim_{R\to \infty} \limsup_{k \to \infty} \int_{d(x_0, x)\geq R} d(x_0, x) d\mu_n =0. \]
    \item[3.] $\mu_n \to \mu$ in the weak sense as $n\to\infty$, and there is a convergence of the moment of first order, i.e., for any $x_0 \in X$, 
    \[ \int_{X \times X} d(x_0,x) \,d\mu_n (x) \to \int_{X\times X} d(x_0,x) \,d\mu(x), \]
    as $n \to \infty$.
\end{itemize}
\end{prop}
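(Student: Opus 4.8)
The plan is to establish the three equivalences as a cycle $(1)\Rightarrow(3)\Rightarrow(2)\Rightarrow(1)$. The single tool behind the first two arrows is the Kantorovich--Rubinstein duality
\[
W_1(\mu,\nu)=\sup\Big\{\,\textstyle\int_X \phi\,d\mu-\int_X \phi\,d\nu \;:\; \phi \text{ is }1\text{-Lipschitz}\,\Big\},
\]
which remains valid for \emph{unbounded} $1$-Lipschitz $\phi$ as long as $\mu,\nu\in\mathcal P_1(X)$ (approximate $\phi$ by $(\phi\wedge R)\vee(-R)$ and let $R\to\infty$, dominating by $d(x_0,\cdot)\in L^1$), together with the standard fact that bounded Lipschitz functions are convergence-determining for weak convergence on a metric space.

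For $(1)\Rightarrow(3)$: weak convergence follows at once since $|\int g\,d\mu_n-\int g\,d\mu|\le \mathrm{Lip}(g)\,W_1(\mu_n,\mu)\to0$ for every bounded Lipschitz $g$, and such $g$ determine weak convergence; applying the duality to the $1$-Lipschitz map $x\mapsto d(x_0,x)$ gives $|\int d(x_0,\cdot)\,d\mu_n-\int d(x_0,\cdot)\,d\mu|\le W_1(\mu_n,\mu)\to0$, i.e.\ convergence of first moments. For $(3)\Rightarrow(2)$, only the tail condition needs work: fix $R>0$ and write $(d(x_0,\cdot)-R)^+=d(x_0,\cdot)-\min(d(x_0,\cdot),R)$; since $\min(d(x_0,\cdot),R)\in C_b(X)$ and the first moments converge by $(3)$, one gets $\int(d(x_0,\cdot)-R)^+\,d\mu_n\to\int(d(x_0,\cdot)-R)^+\,d\mu$ as $n\to\infty$. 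Using $d(x_0,x)\le 2\,(d(x_0,x)-R)^+$ on $\{d(x_0,x)\ge 2R\}$ yields $\limsup_n\int_{d(x_0,x)\ge 2R} d(x_0,x)\,d\mu_n\le 2\int(d(x_0,\cdot)-R)^+\,d\mu$, and the right-hand side tends to $0$ as $R\to\infty$ by dominated convergence since $d(x_0,\cdot)\in L^1(\mu)$; this is precisely the required tightness of first moments.

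For $(2)\Rightarrow(1)$, which is the substantive step: since $X$ is separable and $\mu_n\to\mu$ weakly, the Skorokhod representation theorem provides $Y_n\sim\mu_n$ and $Y\sim\mu$ on a common probability space with $Y_n\to Y$ almost surely. Then $(Y_n,Y)$ is a coupling of $(\mu_n,\mu)$, so $W_1(\mu_n,\mu)\le\mathbb E[d(Y_n,Y)]$. Here $d(Y_n,Y)\to0$ a.s.\ and $d(Y_n,Y)\le d(x_0,Y_n)+d(x_0,Y)$, where $d(x_0,Y)\in L^1$ and $\{d(x_0,Y_n)\}_n$ is uniformly integrable — this is exactly the content of the tail condition in $(2)$ (one upgrades the $\limsup_n$ to $\sup_n$ using that each $d(x_0,Y_n)\in L^1$ handles the finitely many small indices, and $\sup_n\mathbb E[d(x_0,Y_n)]<\infty$ follows similarly from $(2)$ and $\mu_n\in\mathcal P_1$). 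Hence $\{d(Y_n,Y)\}_n$ is uniformly integrable, and Vitali's convergence theorem gives $\mathbb E[d(Y_n,Y)]\to0$, so $W_1(\mu_n,\mu)\to0$.

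The main obstacle is precisely $(2)\Rightarrow(1)$: one must produce near-optimal couplings of $\mu_n$ with $\mu$ from nothing more than weak convergence plus a uniform-integrability condition, and in a general complete separable metric space there is no truncation or projection available to reduce to a compact setting. Invoking Skorokhod's representation sidesteps this cleanly; a more hands-on alternative — restricting the Kantorovich--Rubinstein test functions to a fixed ball, then combining Arzel\`a--Ascoli with the tightness automatically supplied by weak convergence on a Polish space — also works but is noticeably more delicate to organize.
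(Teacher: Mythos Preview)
Your argument is correct. Note, however, that the paper does not prove this proposition at all: it merely states the result and refers the reader to Villani's book \cite{villani} for the proof. So there is no ``paper's own proof'' to compare against here.

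Your self-contained cycle $(1)\Rightarrow(3)\Rightarrow(2)\Rightarrow(1)$ is a clean and standard route. The use of Skorokhod representation plus Vitali for $(2)\Rightarrow(1)$ is particularly tidy; the alternative you mention (working directly with the Kantorovich--Rubinstein dual and controlling the tails of the test functions via tightness) is closer in spirit to how some textbook proofs are organized, but your coupling argument is arguably more transparent. All the technical points you flag --- upgrading the $\limsup$ to a $\sup$ in the tail condition by handling finitely many indices separately, the validity of duality for unbounded Lipschitz $\phi$ when both measures lie in $\mathcal P_1$, and the fact that bounded Lipschitz functions determine weak convergence on a metric space --- are handled correctly.
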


\subsection{Main results}
In this subsection, we state our main results on the small inertia limits for the kinetic system \eqref{eq:kinetic} and its convergence to \eqref{eq:mainsystem}. We consider two cases: smooth and singular interactions potentials. In order to emphasize our results on the small inertia limit of the kinetic system \eqref{eq:kinetic} towards the first order macroscopic system \eqref{eq:mainsystem}, here we only state the theorems on that. The required existence theory for the systems \eqref{eq:kinetic} and \eqref{eq:mainsystem} will be discussed in later sections.

\subsubsection{Smooth potential case}
We start by introducing the notion of weak solutions to \eqref{eq:mainsystem}. We consider first the case of interaction potentials $K_{ij}$ under assumption \eqref{pot}.
\begin{definition}
\label{def:first_order_ws}
A weak solution to \eqref{eq:mainsystem} is a $N$-tuple $\ro = (\rho\i)_{i=1}^N \in \mc([0,T),\mathcal{P}(\R^d)^N)$ that satisfies 
\[
\int_0^T \int_{\R^d} \partial_t \phi\i \rho\i \,dx\,dt -\int_0^T \int_{\R^{d}} \nabla_x \phi\i \cdot \bigg( \sum_{j=1}^N K\ij \ast \rho\j \bigg) \rho\i \,dx\,dt +\int_{\R^d} \phi\i(0) \rho_{i0}\,dx=0,
\]
for each $\phi\i\in \mc_c^1([0,T);\mc_b^1(\R^{d}))$, as $\fori$. 
\end{definition}
We already mentioned in the previous section that the existence of weak solutions to \eqref{eq:mainsystem} can be found in \cite{difrafag}. For smooth interaction potentials the small inertia limit result reads as follows.
\begin{theorem}
\label{th:kinetic_limit} 
Let $T>0.$ Assume all the potentials as in \eqref{pot}. Consider $\f_0 \in\mathcal{P}_c(\R^{2d})^N$. Let $\f\ep \in \mc([0,T);\mathcal{P}_c(\R^{2d})^N)$ be the solution to system \eqref{eq:kinetic} given by Theorem \ref{th:existence}. Let $\rho\i\ep (t,x)=\int_\R f^\varepsilon\i (t,x,v)\,dv$, for $\fori$. Then there exists $\ro\in \mc([0,T);\mathcal{P}_1(\R^d)^N)$ such that for each $t\in [0,T)$,
\[
\ro\ep(t,\cdot)\xrightarrow{\mathcal{W}_1}\ro(t,\cdot) \qquad \text{in $\mathcal{P}_1(\R^d)^N$}
\]
as $\varepsilon\to 0$. Moreover, $\ro$ is a weak solution to system \eqref{eq:mainsystem} in the sense of Definition \ref{def:first_order_ws}.
\end{theorem}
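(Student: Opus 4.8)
The plan is to pass to the limit $\e \to 0$ in the kinetic system \eqref{eq:kinetic} by combining uniform moment estimates with a velocity concentration argument, following the one-species strategy of \cite{fs}. First, I would establish uniform-in-$\e$ bounds on the solutions $\f\ep$: exploiting that $\f_0$ is compactly supported and the potentials satisfy \eqref{pot}, the characteristic flow associated to \eqref{eq:kinetic} keeps the $x$-support bounded on $[0,T)$ uniformly in $\e$, while the damping term $\tfrac{1}{\e}\nabla_v\cdot(vf\i)$ together with the bounded force $\sum_j \nabla K\ij \ast \rho\j$ gives exponential-in-$t/\e$ contraction of the $v$-support plus a bounded equilibrium contribution; thus $\mathrm{supp}_v \f\ep(t)$ is uniformly bounded, and in fact the second velocity moment $\iint |v|^2 \,df\i\ep$ and the ``energy dissipation'' quantity $\tfrac{1}{\e}\iint |v - (\text{force})|^2\, df\i\ep$ can be controlled. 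From the continuity equation $\partial_t \rho\i\ep + \nabla_x\cdot(\rho\i\ep u\i\ep) = 0$, where $u\i\ep = \tfrac1{\rho\i\ep}\int v f\i\ep\,dv$ is the bulk velocity, I would extract equicontinuity in time of $\ro\ep$ in $\W_1$, and together with tightness (uniform compact support) apply an Arzelà--Ascoli / Ascoli-type argument in $\mc([0,T);\mathcal{P}_1(\R^d)^N)$ to get a convergent subsequence $\ro\ep \to \ro$, using Proposition \ref{prop:convergence_properties} to upgrade weak convergence to $\W_1$ convergence via the first-moment (indeed compact-support) control.

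The second main step is identifying the limit as a weak solution of \eqref{eq:mainsystem}. For this I would write the weak formulation of \eqref{eq:kinetic} tested against $\phi\i(t,x)$ (independent of $v$): the streaming term produces $\int\!\int u\i\ep \cdot \nabla_x\phi\i \,\rho\i\ep$, and the right-hand side in divergence form in $v$ vanishes against $v$-independent test functions. So $\ro\ep$ solves exactly Definition \ref{def:first_order_ws} with velocity field $u\i\ep$ in place of $-\sum_j \nabla K\ij \ast \rho\j\ep$. The crux is then showing $\rho\i\ep u\i\ep \rightharpoonup -\rho\i (\sum_j \nabla K\ij \ast \rho\j)$. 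This is where the energy/dissipation estimate enters: the uniform bound on $\tfrac1\e \iint |v + \sum_j \nabla K\ij\ast\rho\j\ep|^2\,df\i\ep$ forces, as $\e\to 0$, the measure $f\i\ep$ to concentrate on the graph $v = -\sum_j \nabla K\ij \ast \rho\j$; quantitatively, $\|\rho\i\ep u\i\ep + \rho\i\ep(\sum_j \nabla K\ij\ast\rho\j\ep)\|$ in a weak norm is $O(\sqrt\e)$. Combined with the strong $\W_1$ convergence $\ro\ep \to \ro$ and the Lipschitz continuity of $\nabla K\ij$ (from \eqref{pot}), which gives $\nabla K\ij \ast \rho\j\ep \to \nabla K\ij\ast\rho\j$ uniformly on the common compact support, one passes to the limit in every term of the weak formulation.

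The main obstacle, as usual in overdamped limits, is making the velocity-concentration step rigorous: one must derive the dissipation inequality with a constant uniform in $\e$ (this requires care because the natural energy for \eqref{eq:kinetic} carries a $\tfrac1\e$ in front of the potential part, so one works instead with the modulated quantity $\iint |v + F\i\ep|^2 \,df\i\ep$ where $F\i\ep = \sum_j \nabla K\ij\ast\rho\j\ep$, and the time derivative of $F\i\ep$ along the flow must be absorbed using the already-established uniform support and velocity-moment bounds), and then convert it into convergence of the flux $\rho\i\ep u\i\ep$ against test functions. A secondary, more technical point is that initially $\f_0$ need not be well-prepared, so the estimate will contain an $O(1)$ boundary-layer contribution near $t=0$; this is harmless for the $\mc([0,T);\mathcal{P}_1)$ convergence since it only affects an $\e$-shrinking time interval, but it must be tracked. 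Everything else — the support propagation, the time-equicontinuity of $\ro\ep$, the limit passage in the smooth convolution terms — is routine given the hypotheses and the cited results.
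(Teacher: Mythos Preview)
Your proposal is correct and follows essentially the same route as the paper. The one substantive variation is that you frame the velocity-concentration step via the $L^2$ modulated quantity $\iint |v + \sum_j \nabla K_{ij}\ast\rho_j^\e|^2\,df_i^\e$, whereas the paper (Proposition~\ref{prop:estimate_smooth}) works with the $L^1$ version $I_i(t)=\iint |v + \sum_j \nabla K_{ij}\ast\rho_j^\e|\,df_i^\e$ and derives a differential inequality $\tfrac{d}{dt}I_i \le -\tfrac1\e I_i + C I_i + C$ directly; the paper then extends this from smooth to measure solutions by mollification (Proposition~\ref{prop:estimate_moll}), a step you do not mention but which is routine. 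Both choices lead to the same conclusion; the paper's $L^1$ Gr\"onwall argument absorbs the ill-prepared initial data into a single function $M(\e)\to 0$ and so sidesteps the boundary-layer bookkeeping you flag, while your $L^2$ version would give the explicit $O(\sqrt\e)$ rate you mention. Everything else---support propagation, time-equicontinuity of $\ro^\e$, Arzel\`a--Ascoli, and the strong convergence of $\nabla K_{ij}\ast\rho_j^\e$ on the common compact support---matches the paper's argument.
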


Theorem \ref{th:kinetic_limit} will be proved in Section \ref{sec:smooth} by extending the results in \cite{fs} where the small inertia limit is proved in the one species case under regularity assumptions on the interaction kernel.

\subsubsection{Singular potential case}
Now we deal with the case of singular interaction potentials. Precisely, we consider the system \eqref{eq:kinetic} with smooth cross-potentials $K\ij$, $i\neq j$ satisfying \eqref{pot} and singular self-potentials $K_{ii}$ of the form
\begin{equation}
\label{eq:singular_potential0}
K\ii(x) \coloneqq \frac{C\i}{\abs{x}^\ai},
\end{equation}
with $\ai \in (0,d)$, and some positive constants $C\i$. Note that  if $\alpha_i \in ((d-2) \vee 0,d)$, then $K_{ii} * \rho_i = \Lambda^{\alpha_i-d}\rho_i$ with $\Lambda = (-\Delta)^{\frac12}$ up to constant. Thus in this case the system \eqref{eq:mainsystem} becomes the following coupled \textit{fractional porous medium flows} \cite{CSV13}:
\[
\partial_t \rho_i = \nabla \cdot \bigg(\rho_i \bigg(\nabla \Lambda^{\alpha_i-d}\rho_i + \sum_{ \substack{ j=1 \\ j\neq i}}^N \nabla K_{ij}\ast\rho_j\bigg)\bigg),
\]
for $\fori$. 

Then our second and main result is stated as follows.
\begin{theorem}\label{thm_ktoc} Let $T>0$ and $d \geq 1$. Let $\f^\e=(f^\e_i)_{i=1}^N \in \mc([0,T); \mathcal{P}(\R^d \times \R^d)^N)$ be a solution to system \eqref{eq:kinetic} in the sense of distributions, and let $(\ro, \mathbf{u})=(\rho_i, u_i)_{i=1}^N$ be the unique classical solution of the system \eqref{eq:mainsystem} with $\rho_i > 0$ on $\R^d \times [0,T)$, $\partial_t u_i + u_i \cdot \nabla u_i \in L^\infty(\R^d \times (0,T))$, and if $\alpha<d-2$, $\nabla^{[(d-\alpha)/2]+1} u_i\in L^\infty((0,T);L^{\frac{d}{[(d-\alpha)/2]}}(\R^d))$ up to time $T>0$ with the initial data $ \rho_{i0}$. If 
\begin{equation}\label{ini_cond_sing0}
\sup_{\e > 0}\sum_{i=1}^N\intdd \abs{v - u_{i0}(x) }^2 f^\e_{i0}(x,v)\,dx\,dv < \infty
\end{equation}
and
\begin{equation}\label{ini_cond_sing1}
\sum_{i=1}^N\intr (\rho_{i0} - \rho^\e_{i0})K_{ii}\ast(\rho_{i0} - \rho^\e_{i0})\,dx + \sum_{i=1}^N W_1(\rho_{i0}, \rho_{i0}^\e) \to 0,
\end{equation}
as $\e \to 0$, then for each $i=1,\dots, N$, we have
\begin{align*}
\intr \,f^\e_i\,dv &\xrightharpoonup{\ast} \rho_i \quad \mbox{weakly-$*$ in } L^\infty((0,T);\mathcal{M}(\R^d)), \cr
\intr v \,f^\e_i\,dv  &\xrightharpoonup{\ast} \rho_i u_i  \quad \mbox{weakly-$*$ in } L^2((0,T);\mathcal{M}(\R^d)), 
\end{align*}
and
\[
f^\e_i \xrightharpoonup{\ast} \rho_i\delta_{u_i} \quad \mbox{weakly-$*$ in } L^2((0,T);\mathcal{M}(\R^d \times \R^d)),
\]
where we denoted by $\mathcal{M}(\R^n)$ the space of signed Radon measures on $\R^n$ with $n \in \N$.
\end{theorem}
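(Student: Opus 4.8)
The strategy is a \emph{modulated energy} estimate comparing the kinetic solution $\f^\e$ to the macroscopic monokinetic profile $(\rho_i\delta_{u_i})_{i}$. For each $i$ set $\rho_i^\e:=\intr f_i^\e\,dv$ and define the local momentum $\rho_i^\e u_i^\e:=\intr v f_i^\e\,dv$; testing \eqref{eq:kinetic} against $1$ and against $v$ yields $\partial_t\rho_i^\e+\nabla\cdot(\rho_i^\e u_i^\e)=0$ together with a momentum balance, and the mass $\intdd f_i^\e\,dx\,dv\equiv1$ is conserved, so that by Jensen's inequality $\intr\rho_i^\e|u_i^\e-u_i|^2\,dx\le\intdd|v-u_i|^2 f_i^\e\,dx\,dv$. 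I would work with
\[
\mathcal E^\e(t):=\sum_{i=1}^N\Big[\tfrac\e2\intdd|v-u_i|^2 f_i^\e\,dx\,dv+\tfrac12\intr(\rho_i^\e-\rho_i)K_{ii}\ast(\rho_i^\e-\rho_i)\,dx+W_1(\rho_i^\e,\rho_i)^2\Big],
\]
which is nonnegative because $K_{ii}\ast\,\cdot\,=c_i\Lambda^{\alpha_i-d}$ is a positive operator, so that the middle term equals $c_i\|\rho_i^\e-\rho_i\|_{\dot H^{-(d-\alpha_i)/2}}^2$. By \eqref{ini_cond_sing0} the kinetic part of $\mathcal E^\e(0)$ is $O(\e)$, and by \eqref{ini_cond_sing1} the rest of $\mathcal E^\e(0)$ vanishes as $\e\to0$, hence $\mathcal E^\e(0)\to0$.

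The core of the proof is a differential inequality
\[
\frac{d}{dt}\mathcal E^\e(t)+c\sum_{i=1}^N\intdd|v-u_i|^2 f_i^\e\,dx\,dv\le C\,\mathcal E^\e(t)+C\e^2,
\]
with $c>0$ and $C$ depending only on $T$ and on the norms of $(\ro,\mathbf u)$ appearing in the hypotheses. To derive it I differentiate $\mathcal E^\e$, inserting \eqref{eq:kinetic} in the $f_i^\e$-terms and \eqref{eq:mainsystem}---in the form $\partial_t\rho_i+\nabla\cdot(\rho_i u_i)=0$, $u_i=-\sum_j\nabla K_{ij}\ast\rho_j$, $\partial_t u_i+u_i\cdot\nabla u_i\in L^\infty$, together with $u_i,\nabla u_i\in L^\infty$ from the classical-solution regularity---in the macroscopic terms, using the symmetry of $K_{ii}$ for the modulated interaction energy. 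The friction term $\tfrac1\e\nabla_v\cdot(vf_i^\e)$ produces the dissipation $-\intdd|v-u_i|^2 f_i^\e\,dx\,dv$. The crucial algebraic fact is that, since $u_i=-\sum_j\nabla K_{ij}\ast\rho_j$, the force term of \eqref{eq:kinetic} merges with the friction term into $-\tfrac1\e\intdd(v-u_i)\cdot\big[\sum_j\nabla K_{ij}\ast(\rho_j^\e-\rho_j)\big]f_i^\e\,dx\,dv$, whose \emph{self} part (the $\e$-weight cancelling the $1/\e$) equals $-\intr\rho_i^\e(u_i^\e-u_i)\cdot\nabla K_{ii}\ast(\rho_i^\e-\rho_i)\,dx$ and \emph{exactly cancels} the first half of $\frac{d}{dt}\big[\tfrac12\intr(\rho_i^\e-\rho_i)K_{ii}\ast(\rho_i^\e-\rho_i)\,dx\big]$, leaving behind only the commutator-type quantity $\Theta_i:=\intr(\rho_i^\e-\rho_i)\,u_i\cdot\nabla K_{ii}\ast(\rho_i^\e-\rho_i)\,dx$. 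The remaining contributions are routine: free transport combines with $\partial_t u_i$ into $-\intdd(v-u_i)\cdot(\partial_t u_i+u_i\cdot\nabla u_i)f_i^\e\,dx\,dv$ plus a term quadratic in $v-u_i$ with coefficient $\nabla u_i$, controlled by a fraction of the dissipation plus $C\mathcal E^\e+C\e^2$; the \emph{cross} force errors ($j\ne i$) are bounded, via $\nabla K_{ij}\in W^{1,\infty}$ and Kantorovich duality, by a fraction of the dissipation plus $C\sum_j W_1(\rho_j^\e,\rho_j)^2$; and $\frac{d}{dt}W_1(\rho_i^\e,\rho_i)^2$ is handled by the standard Wasserstein stability of continuity equations (with $u_i$ Lipschitz in space) together with $\intr\rho_i^\e|u_i^\e-u_i|^2\le\intdd|v-u_i|^2 f_i^\e$, again consuming only a small fraction of the dissipation.

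The principal obstacle is the bound $|\Theta_i|\le C\intr(\rho_i^\e-\rho_i)K_{ii}\ast(\rho_i^\e-\rho_i)\,dx$. Writing $K_{ii}\ast\,\cdot\,=c_i\Lambda^{\alpha_i-d}$ and integrating by parts, $\Theta_i$ reduces---up to a harmless term estimated by $\lesssim\|\nabla\cdot u_i\|_\infty\|\rho_i^\e-\rho_i\|_{\dot H^{-(d-\alpha_i)/2}}^2$ via $\dot H^{-s}$--$\dot H^{s}$ duality---to a commutator $\intr(\rho_i^\e-\rho_i)\,[u_i\cdot\nabla,\Lambda^{\alpha_i-d}](\rho_i^\e-\rho_i)$; since $[u_i\cdot\nabla,\Lambda^{\alpha_i-d}]$ has order $\alpha_i-d$ (one below the naive one), for $\alpha_i\in((d-2)\vee0,d)$ one obtains $\lesssim\|u_i\|_{\mathrm{Lip}}\|\rho_i^\e-\rho_i\|_{\dot H^{-(d-\alpha_i)/2}}^2$, while for the strongly singular range $\alpha_i<d-2$ the fractional Leibniz rule forces precisely the extra hypothesis $\nabla^{[(d-\alpha_i)/2]+1}u_i\in L^\infty((0,T);L^{d/[(d-\alpha_i)/2]}(\R^d))$, in the spirit of the commutator estimates of \cite{choi_jeong_fractional}. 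A secondary, technical point is that $\f^\e$ is only a distributional solution, so the finiteness of the modulated interaction energy and the integrations by parts above must be justified through the uniform regularity/integrability of $\rho_i^\e$ provided by the regularisation construction of solutions to \eqref{eq:kinetic} in the singular setting (or by running the estimate on the approximate solutions and passing to the limit); the uniform moment bounds from the same source also ensure $\rho_i^\e(t),\rho_i(t)\in\mathcal P_1(\R^d)$ and the tightness needed below.

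With the differential inequality in hand, Grönwall's lemma gives $\mathcal E^\e(t)\le e^{CT}(\mathcal E^\e(0)+C\e^2)\to0$ uniformly on $[0,T)$, and integrating it also yields $\int_0^T\sum_i\intdd|v-u_i|^2 f_i^\e\,dx\,dv\,dt\le C(\mathcal E^\e(0)+\e^2)\to0$. The first bound forces $W_1(\rho_i^\e(t),\rho_i(t))\to0$ uniformly in $t$, whence $\intr f_i^\e\,dv=\rho_i^\e\xrightharpoonup{\ast}\rho_i$ in $L^\infty((0,T);\mathcal{M}(\R^d))$ (each $\rho_i^\e(t)$ has unit mass). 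The second bound, combined with Cauchy--Schwarz in $t$ and the identity $\intr v f_i^\e\,dv-\rho_i u_i=\intr(v-u_i)f_i^\e\,dv+(\rho_i^\e-\rho_i)u_i$, gives $\intr v f_i^\e\,dv\xrightharpoonup{\ast}\rho_i u_i$ in $L^2((0,T);\mathcal{M}(\R^d))$; and testing $f_i^\e$ against $\psi(t)\phi(x,v)$, writing $\phi(x,v)=\phi(x,u_i(x))+\big(\phi(x,v)-\phi(x,u_i(x))\big)$ and using $|\phi(x,v)-\phi(x,u_i(x))|\le\|\nabla_v\phi\|_\infty|v-u_i|$ together with $\rho_i^\e\to\rho_i$, yields $f_i^\e\xrightharpoonup{\ast}\rho_i\delta_{u_i}$ in $L^2((0,T);\mathcal{M}(\R^d\times\R^d))$, which is the assertion.
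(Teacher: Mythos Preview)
Your proposal is correct and follows the same modulated-energy route as the paper, differing only in bookkeeping: you weight the kinetic part by $\e$ and include $W_1^2$ directly in $\mathcal E^\e$, whereas the paper weights the interaction part by $1/\e$ and controls $W_1$ through a separate Wasserstein stability lemma (Lemma~\ref{lem_dbl}). Your commutator bound on $\Theta_i$ and your final passage from the energy/dissipation decay to the three weak-$*$ limits are precisely the content of Theorem~\ref{thm_mod} and of \cite[Lemma~4.2]{CJpre}, both of which the paper cites rather than reproves.
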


Our proof for Theorem \ref{thm_ktoc} relies on the modulated energy estimates. For this, we need to establish the existence theory for the kinetic system \eqref{eq:kinetic} and the first order macroscopic system \eqref{eq:mainsystem} at least locally in time. To be more specific, as stated in Theorem  \ref{thm_ktoc}, it suffices to construct the weak solutions to \eqref{eq:kinetic}, but for the limit system \eqref{eq:mainsystem}, it is required to show the existence and uniqueness of regular solutions satisfying the regularity conditions of Theorem \ref{thm_ktoc}. 

\begin{rmk}\label{rmk_sing_main} Here we provide some remarks regarding Theorem \ref{thm_ktoc}. 
\begin{itemize}
    \item[(i)] If we further assume 
\[
\sum_{i=1}^N\intdd \abs{v - u_{i0}(x) }^2 f^\e_{i0}(x,v)\,dx\,dv \to 0 
\]
and
\[
\frac1\e\sum_{i=1}^N\intr (\rho_{i0} - \rho^\e_{i0})K_{ii}\ast(\rho_{i0} - \rho^\e_{i0})\,dx + \frac1\e\sum_{i=1}^N W_1^2(\rho_{i0}, \rho_{i0}^\e) \to 0
\]
as $\e \to 0$, then for each $i=1,\dots,N$, we have
\[
 \intr f^\e_i\,dv \xrightharpoonup{\ast} \rho_i, \qquad  \intr v f^\e_i\,dv \xrightharpoonup{\ast} \rho_i u_i \quad \mbox{weakly-$*$ in } L^\infty((0,T); \calM(\R^d)),
 \]
 and
 \[
 f^\e_i \xrightharpoonup{\ast} \rho_i \delta_{u_i}  \quad \mbox{weakly-$*$ in } L^\infty((0,T); \calM(\R^d \times \R^d))
\]
as $\e \to 0$.

\item[(ii)] Due to some technical reasons, we are only able to construct the global-in-time $L^1\cap L^\infty$ solutions to the kinetic system \eqref{eq:kinetic} for $\alpha_i \in (0,\alpha_i  - 1]$, see Theorem \ref{th:existence_kin_singular}. We also need to assume additional condition $\nabla K_{ij} \in W^{1,1}(\R^d)$, $i\neq j$ to develop the local-in-time well-posedness of the macroscopic system \eqref{eq:mainsystem}, see Theorem \ref{thm_classical}. In that respect, our results are fully rigorous when the interaction potentials $K_{ij}$, $i\neq j$, satisfy
\[
K_{ij} \in C^2(\R^d), \quad \nabla K_{ij} \in W^{1,1}\cap W^{1,\infty}(\R^d)
\]
and $K_{ii}$ is given as \eqref{eq:singular_potential0} with $\alpha_i \in (0,d-1]$. 
\end{itemize} 
\end{rmk}

\section{Smooth interaction potentials}\label{sec:smooth}
\subsection{Well-posedness for the kinetic system for $\varepsilon>0$ fixed}
\label{sec:kinetic_smooth_wellpos}
We start the investigation of the small inertia limit for smooth interaction kernels by studying  the well-posedness for system \eqref{eq:kinetic} for $\varepsilon >0$ fixed, in the spirit of \cite{ccr}.
We start observing that such existence theory can be studied for a more general class of force fields
\[\E\coloneqq \prt{E\i}_{i=1}^N (t,x) : [0,T]\times \R^{d}\to \R^{Nd}, \]
for $\fori$, fulfilling the following general set of hypotheses:
\begin{itemize}
\item [$\mathbf{(H_1)}$] $E\i$ are continuous on $[0,T]\times \R^{d}$, for all $\fori$. 
\item [$\mathbf{(H_2)}$] There exist some positive constants $C\i$ such that
\[
\abs{E\i(t,x)}\leq C\i(1+\abs{x}),
\]
for all $(t,x)\in [0,T]\times \R^{d}$, for all $\fori$.
\item [$\mathbf{(H_3)}$] $E\i$ are locally Lipschitz with respect to $x$ uniformly in $t$, for $\fori$, that is for any compact set $K\subset \R^d$ there exist positive constants $L\i$ such that
\[
\abs{E\i (t,x)-E\i (t,y)}\leq L\i \abs{x-y}, 
\]
for all $x,y\in K$, and for all $t\in [0,T]$.
\end{itemize}

The kinetic system we are going to study takes then the following form
\begin{equation}
\label{eq:kineticgeneral}
\partial _t f\i+v\cdot \nabla _x f\i-\frac{1}{\varepsilon}\nabla_v\cdot (vf\i)+\frac{1}{\varepsilon}E\i \cdot \nabla_v f\i=0,
\end{equation}
for $\fori$. Following the approach in \cite{ccr}, we will construct solutions to \eqref{eq:kineticgeneral} by considering the following characteristic system associated to \eqref{eq:kineticgeneral}
\begin{equation}
\label{eq:charactgeneral}
\begin{dcases} 
\frac{dX}{dt}=V, \\ 
\frac{dV}{dt}=-\frac{1}{\varepsilon} V+\frac{1}{\varepsilon} E\i\prt{t,X},
\end{dcases}
\end{equation}
for $\fori$. Introducing $P\coloneqq \prt{X,V} \in \R^d\times \R^d$, and denoting by
\[
\Psi_{E\i} : [0,T]\times \R^d\times \R^d\to\R^d\times \R^d
\]
the right-hand side of system \eqref{eq:charactgeneral}, we can rewrite system \eqref{eq:charactgeneral} as
\begin{equation}
\label{eq:chatacter}
\frac{d}{dt} P=\Psi _{E\i} (t,P),
\end{equation}
for $\fori$, subject to the initial condition $P_0=(X_0, V_0)\in \R^d \times\R^d$. Existence and uniqueness of solutions to system \eqref{eq:chatacter} falls into the classical ordinary differential equations theory, see \cite{tikhonov}, that provides a vector field $P\in \mc^1([0,T];\R^{2d})$ such that
\[ \abs{P}\leq \abs{P_0}e^{Ct},\]
for all $t\in [0,T]$ and the constant $C$ depending on $T$, $\abs{X_0}$, $\abs{V_0}$. We will use the compact notation
\[ \Psi_\E \coloneqq \prt{\Psi_{E\i} }_{i=1}^N. \]
%Note that system \eqref{eq:kinetic} falls back to this formalism by setting
%\[ \E \coloneqq \prt{E\i}_{i=1}^N= \bigg( -\sum_{j=1}^N \nabla K\ij \ast \rho\j \bigg)_{i=1}^N. \] 

%The following Lemma concerns existence and uniqueness of solutions to system \eqref{eq:charactgeneral}, and falls into the classical ordinary differential equations theory, see \cite{tikhonov}. 

%\begin{lemma} \label{lemma:flow}
%Fix $T>0$ and $\varepsilon>0$.  Consider a vector field $\E=\prt{E\i}_{i=1}^N$  satisfying $\mathbf{(H_1)}$-$\mathbf{(H_2)}$-$\mathbf{(H_3)}$. Let $P_0\in\R^{2d}$ be a given vector. Then, for each $\fori$, there exists a unique solution $P$ to system \eqref{eq:chatacter} with initial condition $P_0$ (or equivalently $\prt{X, V}$ solution to system \eqref{eq:charactgeneral} with initial condition $\prt{X_0, V_0}$, for each $\fori$) such that $P\in \mc^1([0,T];\R^{2d})$.  Furthermore, there exists a constant $C$ depending on $T$, $\abs{X_0}$, $\abs{V_0}$, and on the constants $C\i$ in Assumption $\mathbf{(H_2)}$ such that
%\[ \abs{P}\leq \abs{P_0}e^{Ct},\]
%for all $t\in [0,T].$
%\end{lemma}
%\begin{proof}
%The statement follows from the regularity of the fields $E\i$, as $\fori$, using the standard theory of ordinary differential equations. The bound follows from a direct estimate on the equations, using the Assumption $\mathbf{(H_2)}$.
%\end{proof}

Let us introduce the time-dependent flow map associated to system \eqref{eq:charactgeneral} by 
\[ \mathcal{T}_{E\i}^t : \R^{d}\times\R^{d}\to\R^{d}\times\R^{d}, \]
such that
\[ \mathcal{T}_{E\i}^t \prt{ \prt{X_0, V_0}}=\prt{ X(t), V(t)}, \]
where $\prt{X(t),V(t)}$ is the unique solution to \eqref{eq:charactgeneral} at time $t>0$ under the initial condition $\prt{X_0, V_0}$.

We are now in the position to introduce the notion of measure solution to system \eqref{eq:kineticgeneral}. Consider $f_{i0} \in\mathcal{P}_1(\R^{2d})$ the initial datum of the $i$-th species and let $T>0$. Then, a measure solution to \eqref{eq:kineticgeneral} can be defined as
\[
f\i \prt{t} = \T_{E\i}^t \# f_{i0},
\]
for $\fori$. With a slight abuse of notation, for using a compact formulation we set
\[ \T_\E^t = \big( \T^t_{E\i} \big)_{i=1}^N, \] 
and given the initial datum $\f_0 \in \mathcal{P} \prt{\R^{2d}}^N$ and a time $T>0$, we define the measure solution to \eqref{eq:kineticgeneral} as
\[
\f(t)=\mathcal{T}_\E^t\# \f_0.
\]
Referring to system \eqref{eq:kinetic}, we define the vector field $\E\brk{ \f}$ associated to a $N$-tuple of measures $\f$ as
\begin{equation}
\label{eq:field_sol}
\E \brk{ \f} = \prt{E\i \brk{\f}}_{i=1}^N = \bigg( -\sum_{j=1}^N \nabla K\ij\ast	\rho\j \bigg)_{i=1}^N.
\end{equation}
We can now give the notion of measure solution to system \eqref{eq:kinetic} as in \cite{ccr,fs}.
\begin{definition} [Measure solution to \eqref{eq:kinetic}] \label{def:solution} Fix $T>0$ and $\varepsilon>0$. Let $\f_0\in\mathcal{P}_1(\R^{2d})^N$ be a given initial condition and let $\E\brk{\f}$ be defined as in \eqref{eq:field_sol}. A $N$-tuple $\f : [0,T]\to\mathcal{P}_1(\R^{2d})^N$ is a measure solution to system \eqref{eq:kinetic} with initial condition $\f_0$ if: 
\begin{itemize}
\item [1.] the field $\E\brk{\f}$ defined in \eqref{eq:field_sol} satisfies the conditions $\mathbf{(H_1)}$-$\mathbf{(H_2)}$-$\mathbf{(H_3)}$;
\item[2.] it holds $\f (t)=\T_{\E\brk{\f}}^t\# \f_0 $.
\end{itemize}
\end{definition}

\subsubsection{A priori estimates on the characteristics system} \label{subsec:characteristics}
In this part, we collect some results on the solution to the characteristic system \eqref{eq:charactgeneral}. Proofs of the two Lemmas below can be obtained directly from system \eqref{eq:charactgeneral} and by definition of $\Psi_\E$ in \eqref{eq:chatacter}, see \cite{ccr,fs}.

\begin{lemma}\label{lemma:regularity1} Fix $T>0$. Let $\E,\D : [0,T]\times \R^{d}\to \R^{Nd}$ be  fields that satisfy $\mathbf{(H_1)}$-$\mathbf{(H_2)}$-$\mathbf{(H_3)}$ and let $\Psi_\E, \Psi_\D$ as in \eqref{eq:chatacter}. Consider $R>0$ and the closed ball $B_R\subset \R^{Nd}\times \R^{Nd}$. Then
\begin{itemize}
\item [1.] $\Psi_\E$ is bounded in compact sets, i.e.,
\[ 
\abs{\Psi_\E (t,P)}\leq C,
\]
for all $P\in B_R$, $t\in [0,T]$ and for some $C>0$ which depends on $R$ and $\norm{\E}_{L^\infty ([0,T]\times B_R^1)},$ where $B_R^1$ is the ball in $\R^d$ with radius $R$.
\item [2.] $\Psi_\E$ is locally Lipschitz with respect to $X$ and $V$, i.e.,
\[ \abs{\Psi_\E (t,P_1)-\Psi_\E(t,P_2)}\leq C (1+Lip_R(\E))\abs{P_1-P_2}, \]
for all $P_1, P_2\in B_R$, $t\in [0,T]$ and $C>0.$
\item [3.] For any compact set $B\subset \R^{Nd}\times\R^{Nd}$,
\[ \norm{\Psi_\E-\Psi_\D}_{L^\infty (B)}\leq \frac{1}{\varepsilon} \norm{\E-\D}_{L^\infty (B^1)}. \]
\end{itemize}
\end{lemma}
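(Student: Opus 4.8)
The plan is to unwind the explicit form of $\Psi_\E$ and then apply the structural hypotheses $\mathbf{(H_1)}$–$\mathbf{(H_3)}$ essentially verbatim. Writing $P=(X,V)\in\R^d\times\R^d$, the right-hand side of \eqref{eq:charactgeneral} is
\[
\Psi_{E\i}(t,P) = \Big(V,\ -\tfrac{1}{\varepsilon}V + \tfrac{1}{\varepsilon}E\i(t,X)\Big),
\]
and $\Psi_\E = (\Psi_{E\i})_{i=1}^N$. Each of the three claims then reduces to an elementary pointwise estimate on this map together with a sum over $i=1,\dots,N$; recall throughout that $\varepsilon>0$ is fixed, so any factor $1/\varepsilon$ is just a harmless constant.

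For item~1, if $P\in B_R$ then $|X|\le R$ and $|V|\le R$, hence
\[
|\Psi_{E\i}(t,P)| \le \big(1+\tfrac{1}{\varepsilon}\big)|V| + \tfrac{1}{\varepsilon}|E\i(t,X)| \le \big(1+\tfrac{1}{\varepsilon}\big)R + \tfrac{1}{\varepsilon}\|E\i\|_{L^\infty([0,T]\times B_R^1)},
\]
and summing over $i$ produces a constant $C$ depending only on $R$ and $\|\E\|_{L^\infty([0,T]\times B_R^1)}$ (with $\varepsilon$ absorbed). For item~2, given $P_1=(X_1,V_1), P_2=(X_2,V_2)\in B_R$, the first component of $\Psi_{E\i}(t,P_1)-\Psi_{E\i}(t,P_2)$ is $V_1-V_2$ and the second is $-\tfrac1\varepsilon(V_1-V_2)+\tfrac1\varepsilon\big(E\i(t,X_1)-E\i(t,X_2)\big)$; bounding the last difference by $Lip_R(\E)|X_1-X_2|$ via $\mathbf{(H_3)}$ gives
\[
|\Psi_{E\i}(t,P_1)-\Psi_{E\i}(t,P_2)| \le \big(1+\tfrac1\varepsilon\big)|V_1-V_2| + \tfrac1\varepsilon Lip_R(\E)\,|X_1-X_2| \le C\big(1+Lip_R(\E)\big)|P_1-P_2|,
\]
and summing over $i$ closes the estimate. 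For item~3, the $V$-component of $\Psi_{E\i}$ and $\Psi_{D\i}$ agree and the drift terms $-\tfrac1\varepsilon V$ cancel, so for every $P=(X,V)$ with $X\in B^1$ one has $|\Psi_{E\i}(t,P)-\Psi_{D\i}(t,P)| = \tfrac1\varepsilon|E\i(t,X)-D\i(t,X)|$, whence $\|\Psi_\E-\Psi_\D\|_{L^\infty(B)}\le\tfrac1\varepsilon\|\E-\D\|_{L^\infty(B^1)}$.

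There is no genuine obstacle in this lemma: the content is just that the velocity-drift term is linear and the only nonlinear/nonsmooth input, $E\i(t,X)$, enters affinely and is controlled on compacts by $\mathbf{(H_2)}$ and $\mathbf{(H_3)}$. The only points deserving care are (a) that the local Lipschitz constants furnished by $\mathbf{(H_3)}$ on the compact ball $B_R^1$ are exactly what yields the factor $Lip_R(\E)$ in item~2, and (b) keeping the dependence on $\varepsilon$ explicit, since although it is an innocuous constant here, its precise role must be tracked in the subsequent small-inertia analysis.
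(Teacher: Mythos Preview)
Your proof is correct and is exactly the direct computation the paper has in mind: the authors omit the argument entirely, stating that it ``can be obtained directly from system \eqref{eq:charactgeneral} and by definition of $\Psi_\E$'' with a reference to \cite{ccr,fs}. Your unpacking of $\Psi_{E\i}(t,P)=(V,\,-\tfrac1\varepsilon V+\tfrac1\varepsilon E\i(t,X))$ and componentwise estimation is precisely that direct verification.
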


%\begin{lemma} [Dependence of the characteristic equations on $\E$] \label{lemma:depending1} Consider two fields $\E$ and $\D$ satisfying $\mathbf{(H_1)}$-$\mathbf{(H_2)}$-$\mathbf{(H_3)}$, and consider the functions $\Psi_\E, \Psi_\D$ as in \eqref{eq:chatacter}. Then, for any compact set $B\subset \R^{Nd}\times\R^{Nd}$,
%\[ \norm{\Psi_\E-\Psi_\D}_{L^\infty (B)}\leq \frac{1}{\varepsilon} \norm{\E-\D}_{L^\infty (B^1)}. \]
%\end{lemma}

Now we provide some results that concern the dependence of the characteristics on the field $\E$ and a quantitative bound on the regularity of the flow $\mathcal{T}_\E ^t$.

\begin{lemma}\label{lemma:depending2} Fix $T>0$ and consider  two vector fields $\E$ and $\D$ satisfying $\mathbf{(H_1)}$-$\mathbf{(H_2)}$-$\mathbf{(H_3)}$. Take $P_0,P_1,P_2\in\R^{Nd}\times\R^{Nd}$ and $R>0$. Assume
\[ \abs{\mathcal{T}^t_\E(P_0)}\leq R, \quad \abs{\mathcal{T}^t_\D(P_0)}\leq R,\quad \abs{\mathcal{T}^t_\E(P_1)}\leq R, \quad \abs{\mathcal{T}^t_\E(P_2)}\leq R, \]
for $t\in [0,T]$. Then,
\begin{itemize}
    \item [1.] there is a constant $C$ depending on $R$ and $Lip_R(\E)$ such that
\[ \abs{\mathcal{T}^t_\E(P_0)-\mathcal{T}^t_\D(P_0)}\leq\frac{e^{Ct}-1}{C\varepsilon}\sup_{s\in[0,T)} \norm{\E(s)-\D(s)}_{L^\infty (B_R^1)}, \]
for $t\in [0,T]$.
\item [2.] There is a constant $C$ depending on $R$ such that
\[ \abs{\mathcal{T}^t_\E(P_1)-\mathcal{T}^t_\E(P_2)}\leq \abs{P_1-P_2} e^{C\int_0^t (Lip_R(\E(s))+1)\,ds}, \]
for $t\in [0,T]$.
\item[3.] There is a constant $C$ depending on $R$ and  $\norm{\E}_{L^\infty ([0,T]\times B_R^1)}$ such that
\[ \abs{\mathcal{T}^t_\E(P_0)-\mathcal{T}^s_\E(P_0)}\leq C\abs{t-s}, \]
for $s,t\in [0,T]$. 
\end{itemize}
\end{lemma}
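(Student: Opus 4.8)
The plan is to obtain all three bounds directly from the characteristic ODE system \eqref{eq:charactgeneral}, written in the compact form \eqref{eq:chatacter} with $\Psi_\E = (\Psi_{E_i})_{i=1}^N$, combined with the quantitative estimates on $\Psi_\E$ collected in Lemma \ref{lemma:regularity1}, and then to close each estimate by Gr\"onwall's inequality. The a priori confinement hypotheses $\abs{\mathcal{T}^t_\bullet(P_j)}\le R$ are precisely what allow one to invoke the local (in $B_R$) bounds of Lemma \ref{lemma:regularity1} along the relevant trajectories with constants independent of $t$.

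For item 1, I would set $P^\E(t):=\mathcal{T}^t_\E(P_0)$ and $P^\D(t):=\mathcal{T}^t_\D(P_0)$, so that both solve \eqref{eq:chatacter} (with fields $\E$, resp.\ $\D$) and agree at $t=0$. Subtracting the two equations and inserting $\pm\,\Psi_\E(t,P^\D(t))$,
\[
\frac{d}{dt}\bigl(P^\E - P^\D\bigr) = \bigl[\Psi_\E(t,P^\E) - \Psi_\E(t,P^\D)\bigr] + \bigl[\Psi_\E(t,P^\D) - \Psi_\D(t,P^\D)\bigr],
\]
and by items 2 and 3 of Lemma \ref{lemma:regularity1} the first bracket is bounded by $C\,(1+Lip_R(\E))\abs{P^\E-P^\D}$ and the second by $\tfrac1\varepsilon\norm{\E(t)-\D(t)}_{L^\infty(B_R^1)}$. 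Absorbing $(1+Lip_R(\E))$ into $C$ and applying Gr\"onwall to $g(t):=\abs{P^\E(t)-P^\D(t)}$ with $g(0)=0$ gives $g(t)\le \tfrac1\varepsilon\bigl(\sup_{s}\norm{\E(s)-\D(s)}_{L^\infty(B_R^1)}\bigr)\int_0^t e^{C(t-\tau)}\,d\tau$, which is the asserted bound $\tfrac{e^{Ct}-1}{C\varepsilon}\sup_{s}\norm{\E(s)-\D(s)}_{L^\infty(B_R^1)}$.

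For item 2, with $P_1(t):=\mathcal{T}^t_\E(P_1)$, $P_2(t):=\mathcal{T}^t_\E(P_2)$ solving \eqref{eq:chatacter} for the same field $\E$, item 2 of Lemma \ref{lemma:regularity1} yields $\bigl|\tfrac{d}{dt}(P_1-P_2)\bigr| \le C\,(1+Lip_R(\E(t)))\abs{P_1-P_2}$, and Gr\"onwall gives $\abs{P_1(t)-P_2(t)} \le \abs{P_1-P_2}\,e^{C\int_0^t(Lip_R(\E(s))+1)\,ds}$. For item 3, take $s\le t$ without loss of generality and write $\mathcal{T}^t_\E(P_0)-\mathcal{T}^s_\E(P_0)=\int_s^t\Psi_\E(\tau,\mathcal{T}^\tau_\E(P_0))\,d\tau$; since the trajectory stays in $B_R$, item 1 of Lemma \ref{lemma:regularity1} bounds the integrand by a constant $C=C(R,\norm{\E}_{L^\infty([0,T]\times B_R^1)})$, whence $\abs{\mathcal{T}^t_\E(P_0)-\mathcal{T}^s_\E(P_0)}\le C\abs{t-s}$.

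I do not anticipate a genuine obstacle: these are textbook Gr\"onwall arguments once Lemma \ref{lemma:regularity1} is in hand. The only point that requires a little care is the bookkeeping of constants — in particular, verifying that the $C$ appearing in the exponent in items 1 and 2 coincides with the one bounding the right-hand side of the corresponding differential inequality, and that it may be chosen uniformly in $t$ by virtue of the confinement assumption $\abs{\mathcal{T}^t_\bullet(P_j)}\le R$.
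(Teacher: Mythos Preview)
Your proposal is correct and follows exactly the approach the paper indicates: the paper does not spell out a proof but simply states that the result ``can be obtained directly from system \eqref{eq:charactgeneral} and by definition of $\Psi_\E$ in \eqref{eq:chatacter}, see \cite{ccr,fs}'', and your argument---subtracting characteristics, splitting via $\pm\Psi_\E(t,P^\D)$, invoking the three parts of Lemma \ref{lemma:regularity1}, and closing with Gr\"onwall---is precisely the standard computation those references carry out.
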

% \begin{proof}
% Let $P_\E(t)=\mathcal{T}^t_\E(P_0)$ and $P_\D(t)=\mathcal{T}^t_\D(P_0)$ be the solutions to   system  \eqref{eq:charactgeneral} with vector fields $\E$ and $\D$ respectively, that is
% \[ \frac{d}{dt}P_\E(t)=\Psi_\E(t,P_\E(t)), \qquad\frac{d}{dt}P_\D(t)=\Psi_\D(t,P_\D(t)). \]
% Using Lemma \ref{lemma:regularity1} and Lemma \ref{lemma:depending1} we have
% \begin{align*}
% \abs{P_\E(t)-P_\D(t)} \leq & \int_0^t \abs{\Psi_\E(s,P_\E(s))-\Psi_\D(s,P_\D(s))}\,ds \\ \leq & \int_0^t \abs{\Psi_\E(s,P_\E(s))-\Psi_\E(s,P_\D(s))}\,ds
% \\& \quad +\int_0^t \abs{\Psi_\E(s,P_\D(s))-\Psi_\D(s,P_\D(s))}\,ds \\ \leq & \ C\int_0^t \abs{P_\E(s)-P_\D(s)}\,ds+ \int_0^t \frac{1}{\varepsilon} \norm{\E(s)-\D(s)}_{L^\infty(B_R^1)}\,ds.
% \end{align*}
% By Gr\"onwall's lemma we obtain that
% \begin{align*}
% \abs{P_\E(t)-P_\D(t)}&\leq\int_0^t e^{C(t-s)} \frac{1}{\varepsilon} \norm{\E(s)-\D(s)}_{L^\infty(B_R^1)}\,ds \\& \leq \frac{e^{Ct}-1}{C \varepsilon}  \sup_{s\in[0,T)}\norm{\E(s)-\D(s)}_{L^\infty(B_R^1)},
% \end{align*}
% that concludes the proof.
% \end{proof}

\begin{rmk}
Note that the sub-linearity assumption on the vector field $\E$ ensures global existence for solution for $t\in\R$. The boundedness assumption in Lemma \ref{lemma:depending2} on the initial flow $\mathcal{T}^t_{\E} (P_0)$ is only needed to prove a quantitative estimate on the flow map for every time $t \in [0,T]$. Moreover, Lemma \ref{lemma:depending2} ensures that the flow $\mathcal{T}^t_\E$ is Lipschitz on $B_R\subset \R^{Nd}\times\R^{Nd}$, with constant
\[ Lip_R(\mathcal{T}^t_\E)\leq e^{C\int_0^t (Lip_R(\E(s))+1)\,ds}, \]
for $t\in [0,T]$.

\end{rmk}

In the following lemmas, we collect some contraction results in the Wasserstein distance $\mathcal{W}_1$ that are crucial in proving the existence of measure solutions for \eqref{eq:kineticgeneral}. What we reproduce is the extension to multiple species of the results in  \cite[Lemmas 3.11, 3.12, and 3.13]{ccr}, see also \cite{fs}. 

\begin{lemma} \label{lemma:bou_dist_init_data}
Let $\E, \,\D : \R^{Nd}\to\R^{Nd}$ be two Borel measurable maps and let $\f\in\mathcal{P}_1(\R^d)^N.$ Then
\[ \W_1\big( \E\#\f,\D\#\f \big) \leq \norm{\E-\D}_{\ninf(\emph{supp}\, \f)}. \]
\end{lemma}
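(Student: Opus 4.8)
\textbf{Proof proposal for Lemma~\ref{lemma:bou_dist_init_data}.}
The plan is to reduce the claim componentwise and then exhibit an explicit transport plan on each component. By the definition of $\W_1$ in Definition~\ref{def:W1}, it suffices to show, for each $i=1,\dots,N$, the one-component inequality
\[
W_1\big(E_i\#f_i, D_i\#f_i\big)\leq \norm{E_i-D_i}_{L^\infty(\mathrm{supp}\,f_i)},
\]
since summing over $i$ yields the bound with $\norm{\E-\D}_{L^\infty(\mathrm{supp}\,\f)}$ on the right (taking the supremum over $[0,T]$ is vacuous here as the maps are time-independent, or is handled trivially if one reads $\W_1$ without the sup). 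So fix $i$ and write $\mu = E_i\#f_i$, $\nu = D_i\#f_i$.

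For the one-component estimate I would construct the coupling $\gamma := (E_i, D_i)\#f_i \in \mathcal P(\R^d\times\R^d)$, i.e.\ the push-forward of $f_i$ under the map $x\mapsto (E_i(x), D_i(x))$. The first key step is to check that $\gamma \in \Pi(\mu,\nu)$: its first marginal is $E_i\#f_i=\mu$ and its second marginal is $D_i\#f_i=\nu$, which follows immediately from the composition property of push-forwards applied to the two coordinate projections. The second key step is the change-of-variables computation
\[
W_1(\mu,\nu)\leq \iint_{\R^d\times\R^d}\abs{y-z}\,d\gamma(y,z) = \intr \abs{E_i(x)-D_i(x)}\,df_i(x) \leq \norm{E_i-D_i}_{L^\infty(\mathrm{supp}\,f_i)},
\]
where the first inequality is because $\gamma$ is an admissible plan (not necessarily optimal), the equality is the push-forward formula, and the last step uses that $f_i$ is a probability measure supported on $\mathrm{supp}\,f_i$. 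Putting the $N$ components together gives the claim.

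There is essentially no hard obstacle here; the only points requiring a little care are the measurability bookkeeping — one should note that $E_i, D_i$ being Borel measurable makes $(E_i,D_i)$ Borel measurable so that $\gamma$ is a well-defined Borel probability measure — and the (harmless) observation that the supremum in Definition~\ref{def:W1} over $t\in[0,T]$ plays no role since the maps $\E,\D$ carry no time dependence in this lemma. One should also confirm that $E_i\#f_i, D_i\#f_i$ actually lie in $\mathcal P_1(\R^d)$ so that $W_1$ is finite-valued; this is automatic once the right-hand side is finite, since the computed integral bounds $\int \abs{y}\,d\mu(y)$ up to $\int\abs{D_i(x)}\,df_i(x)$, but in the intended applications the relevant maps are (locally) Lipschitz with linear growth and $f_i$ has finite first moment, so integrability is not an issue.
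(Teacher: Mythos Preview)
Your argument is correct and is exactly the standard proof of this fact: push forward $f_i$ under the diagonal map $(E_i,D_i)$ to obtain an admissible transport plan, then read off the bound from the push-forward change of variables. The paper does not actually give its own proof of this lemma; it only records the statement as the multi-species version of \cite[Lemma~3.11]{ccr} (see also \cite{fs}). Your write-up matches the argument in those references, so there is nothing to compare beyond noting that the proof is omitted in the present paper and that your coupling construction is the expected one.
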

% \begin{proof}
% Thanks to the Definition \ref{def:W1} of $1$-Wasserstain distance in the product space, it holds that
% \begin{align*}
%     \W_1 \big( \E\#\f,\D\#\f\big) &\leq \sum_{i=1}^N W_1 (E\i\#f\i,D\i\#f\i)\\ & \leq \sum_{i=1}^N \norm{E\i-D\i}_{\ninf (\text{supp}\,\f)} \\ & \leq \norm{\E-\D}_{\ninf(\text{supp}\,\f)},
% \end{align*}
% thus the statement is proved.
% \end{proof}

\begin{lemma}\label{lemma:cont_time} Let $T>0.$ Let $\E: [0,T]\times\R^{d}\to\R^{Nd}$ be a field that satisfies $\mathbf{(H_1)}$-$\mathbf{(H_2)}$-$\mathbf{(H_3)}$ and let $\f$ be a $N$-tuple of measures on $\R^d$ with compact support contained in a ball $B_R\subset \R^d$. Then, there exists a positive constant $C$ depending on $N$, $R$ and $\norm{\E}_{L^\infty ([0,T]\times B_R^1)}$ such that
\[ \mathcal{W}_1 (\mathcal{T}_\E^t\#\f, \mathcal{T}_\E^s\#\f)\leq C\abs{t-s}, \]
for any $s,t\in [0,T]$.
\end{lemma}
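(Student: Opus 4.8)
The plan is to estimate $\mathcal{W}_1$ componentwise, using the flow map itself to build an explicit transport plan, and thereby reduce the statement to a uniform-in-time continuity estimate for single trajectories, which is exactly the content of Lemma \ref{lemma:depending2}(3). Fix $s,t\in[0,T]$ and $i\in\{1,\dots,N\}$. Since $\big(\mathcal{T}_{E\i}^t,\mathcal{T}_{E\i}^s\big)\#f\i$ is an admissible coupling of $\mathcal{T}_{E\i}^t\#f\i$ and $\mathcal{T}_{E\i}^s\#f\i$, the definition of $W_1$ gives
\[
W_1\big(\mathcal{T}_{E\i}^t\#f\i,\ \mathcal{T}_{E\i}^s\#f\i\big)\le \int \big|\mathcal{T}_{E\i}^t(P)-\mathcal{T}_{E\i}^s(P)\big|\,df\i(P).
\]
Summing over $i$ and taking the supremum in $t$, it then suffices to bound the integrand uniformly for $P\in\mathrm{supp}\,f\i\subset B_R$.

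First I would fix a ball, independent of time, enclosing all trajectories issued from $B_R$: by the sublinear growth $\mathbf{(H_2)}$ together with the a priori bound $|P(t)|\le|P_0|e^{Ct}$ for solutions of \eqref{eq:chatacter} recalled above, one has $|\mathcal{T}_{E\i}^t(P_0)|\le R\,e^{CT}=:R'$ for every $P_0\in B_R$ and every $t\in[0,T]$, with $C$ depending only on $T$ and the growth constants in $\mathbf{(H_2)}$. Then I would invoke Lemma \ref{lemma:depending2}(3) with radius $R'$: it produces a constant $C$, depending on $R'$ (hence on $R$ and $T$) and on $\norm{\E}_{L^\infty([0,T]\times B_{R'}^1)}$, such that $|\mathcal{T}_{E\i}^t(P_0)-\mathcal{T}_{E\i}^s(P_0)|\le C|t-s|$ for all $P_0\in B_R$ and all $s,t\in[0,T]$. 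Substituting into the integral, using that each $f\i$ is a probability measure, and adding the $N$ contributions yields $\mathcal{W}_1(\mathcal{T}_\E^t\#\f,\mathcal{T}_\E^s\#\f)\le NC|t-s|$, i.e.\ the claim after relabelling the constant.

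There is essentially no serious obstacle here. The one point requiring care is that the constant delivered by Lemma \ref{lemma:depending2}(3) depends on the radius of the ball in which the trajectories live, so one must first produce a uniform-in-time enclosing ball $B_{R'}$ for all trajectories starting in $\mathrm{supp}\,\f$ — which is precisely where $\mathbf{(H_2)}$ and the Gr\"onwall-type bound $|P(t)|\le|P_0|e^{Ct}$ enter. (If one insists on the constant depending only on $\norm{\E}_{L^\infty([0,T]\times B_R^1)}$ as in the statement, it is enough to note $B_R^1\subseteq B_{R'}^1$ and absorb the difference.) Everything else is the coupling inequality for pushforwards combined with a direct application of Lemma \ref{lemma:depending2}.
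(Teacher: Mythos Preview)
Your proof is correct and follows the standard route the paper defers to \cite{ccr} for: combine the coupling bound $W_1(\mathcal{T}_{E_i}^t\#f_i,\mathcal{T}_{E_i}^s\#f_i)\le\|\mathcal{T}_{E_i}^t-\mathcal{T}_{E_i}^s\|_{L^\infty(\mathrm{supp}\,f_i)}$ (this is exactly Lemma~\ref{lemma:bou_dist_init_data}) with the time-Lipschitz estimate of Lemma~\ref{lemma:depending2}(3), after first enlarging the ball via the a priori growth bound so that the hypotheses of the latter are met. One small correction to your final parenthetical: the inclusion $B_R^1\subseteq B_{R'}^1$ gives the inequality in the wrong direction for controlling $\norm{\E}_{L^\infty([0,T]\times B_{R'}^1)}$ by $\norm{\E}_{L^\infty([0,T]\times B_R^1)}$; the right fix is that $\mathbf{(H_2)}$ bounds $\norm{\E}_{L^\infty([0,T]\times B_{R'}^1)}$ by a constant depending only on $R'$ (hence on $R$ and $T$) and the sublinearity constants, which is enough.
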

% \begin{proof}
% By the definition of flow in \eqref{eq:flow}, we have that
% \begin{align*}
%     \W_1 \big(\mathcal{T}_\E^t\#\f, \mathcal{T}_\E^s\#\f \big)  & \leq \sum_{i=1}^N W_1(\mathcal{T}_{E\i}^t\#f\i, \mathcal{T}_{E\i}^s\#f\i)  \\ & \leq \sum_{i=1}^N C\i \abs{t-s} \\&\leq C \abs{t-s},
% \end{align*}
% therefore the statement holds.
% \end{proof}

\begin{lemma} \label{lemma:lip_contr}
Let $\mathcal{T} : \R^{Nd}\to\R^{Nd}$ be a Lipschitz map and let $\f,\g\in\mathcal{P}_1(\R^d)^N$ both have compact support contained in a ball $B_R.$ Then
\[ \W_1 \big( \mathcal{T}\#\f, \mathcal{T}\#\g \big) \leq L \W_1\prt{ \f,\g },\]
where $L$ is the Lipschitz constant of $\T$ on the ball $B_R$.
\end{lemma}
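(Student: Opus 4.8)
The plan is to reduce the statement to a single species and then invoke the classical fact that pushing a transport plan forward by a Lipschitz map yields a transport plan whose cost is controlled by the Lipschitz constant. Consistently with the convention used above for the flow maps $\T_\E^t$, I read $\mathcal{T}$ as acting species-wise, $\mathcal{T}=(\mathcal{T}_i)_{i=1}^N$ with $\mathcal{T}_i\colon\R^d\to\R^d$ and $\mathcal{T}\#\f=(\mathcal{T}_i\#f_i)_{i=1}^N$; by Definition \ref{def:W1} one then has $\W_1(\mathcal{T}\#\f,\mathcal{T}\#\g)=\sum_{i=1}^N W_1(\mathcal{T}_i\#f_i,\mathcal{T}_i\#g_i)$, so it suffices to prove $W_1(\mathcal{T}_i\#f_i,\mathcal{T}_i\#g_i)\le L\,W_1(f_i,g_i)$ for each fixed $i$ and then sum. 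I fix such an $i$ and drop the index, writing $f,g,\mathcal{T}$ for $f_i,g_i,\mathcal{T}_i$.

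First I would pick an optimal plan $\gamma\in\Pi(f,g)$ for $W_1(f,g)$ — its existence follows from the compactness of the set of couplings, since $f,g$ are compactly supported, or one can equally well work with $\eta$-almost optimal plans and let $\eta\to 0$ — and record that, since the two marginals of $\gamma$ are concentrated on $B_R$, the plan $\gamma$ is concentrated on $B_R\times B_R$. Next I would introduce the pushed-forward plan $\tilde\gamma\coloneqq(\mathcal{T}\times\mathcal{T})\#\gamma$ and verify, by a change of variables tested against bounded continuous functions, that $\tilde\gamma\in\Pi(\mathcal{T}\#f,\mathcal{T}\#g)$: indeed
\[
\intdd\varphi(x)\,d\tilde\gamma(x,y)=\intdd\varphi(\mathcal{T}(x))\,d\gamma(x,y)=\intr\varphi(\mathcal{T}(x))\,df(x)=\intr\varphi\,d(\mathcal{T}\#f),
\]
and the analogous identity holds for the second marginal.

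Then, using $\tilde\gamma$ as a competitor in the definition of $W_1$ and applying the change of variables once more,
\[
W_1(\mathcal{T}\#f,\mathcal{T}\#g)\le\intdd\abs{x-y}\,d\tilde\gamma(x,y)=\intdd\abs{\mathcal{T}(x)-\mathcal{T}(y)}\,d\gamma(x,y),
\]
and since $\gamma$ lives on $B_R\times B_R$ and $\mathcal{T}$ is $L$-Lipschitz there, $\abs{\mathcal{T}(x)-\mathcal{T}(y)}\le L\abs{x-y}$ holds $\gamma$-almost everywhere, so the right-hand side is at most $L\intdd\abs{x-y}\,d\gamma=L\,W_1(f,g)$. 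Summing the resulting inequalities over $i=1,\dots,N$ yields $\W_1(\mathcal{T}\#\f,\mathcal{T}\#\g)\le L\,\W_1(\f,\g)$.

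I do not expect a serious obstacle here: this is the standard contraction property of Lipschitz pushforwards in Wasserstein distance. The only point requiring mild care is that $\mathcal{T}$ is assumed Lipschitz merely on $B_R$, which is exactly why one must first observe that any coupling of $f$ and $g$ is supported in $B_R\times B_R$, so that the Lipschitz estimate applies to the integrand; everything else is a routine verification of marginals under the map $\mathcal{T}\times\mathcal{T}$.
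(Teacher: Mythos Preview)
Your argument is correct and is precisely the standard proof of this classical contraction property; the paper does not give its own proof but merely cites \cite[Lemmas 3.11--3.13]{ccr} and \cite{fs}, where the same push-forward-of-an-optimal-plan argument is used. Your remark that any coupling of $f$ and $g$ is supported in $B_R\times B_R$, so that the local Lipschitz constant suffices, is exactly the point needed to match the statement as written.
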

% \begin{proof}
% By a simple application of the triangular inequality we have
% \begin{align*}
%     \W_1 \big( \T\#\f,\T\#\g\big) & = \W_1 \big( (\T_1\#f_1, \cdots, \T_N\#f_N),(\T_1\#g_1,\cdots , \T_N\#g_N)\big) \\ & \leq \sum_{i=1}^N \big[ W_1 \prt{\T\i \#f\i, \T\i\#g\i } \big]\\ & \leq \sum_{i=1}^N L\i W_1(f\i,g\i) \\ & \leq L \ \W_1 \prt{\f, \g},
% \end{align*}
% that concludes the proof.
% \end{proof}

\subsubsection{Existence and uniqueness for smooth potentials}
We turn now to the existence and uniqueness of measure solutions to system \eqref{eq:kinetic}. We first provide the following preliminary Lemmas, whose proof is straightforward and we omit.

\begin{lemma} \label{lemma:norm_bound}
Assume the potentials $K\ij$ under assumption \eqref{pot}. Let $\f\in\mathcal{P}_1(\R^{2d})^N$ be with compact support contained in a ball $B_R\subset \R^{2d}$. Set $B_R^1\coloneqq \{ x\, : \, \prt{x,v}\in B_R \}$. Consider the vector field defined in \eqref{eq:field_sol}. Then,
\[ \norm{\E[\f]}_{\ninf(B_R^1)}\leq \Xi, \quad\mbox{ and }
\quad Lip_R(\E[\f])\leq \Upsilon, \]
where the constants $\Xi$ and $\Upsilon$ are defined by
\[
\Xi \coloneqq \sum_{i,j=1}^N \norm{\nabla K\ij}_{\ninf(B_{2R})},
\]
and
\[
\Upsilon\coloneqq \sum_{i,j=1}^N Lip_{2R}(\nabla K\ij).\]
\end{lemma}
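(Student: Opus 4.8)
The plan is to unwind the definition of the force field $\E[\f]$ in \eqref{eq:field_sol} and combine the compact support of $\f$ with the $W^{1,\infty}$ regularity of the kernels guaranteed by \eqref{pot}. The one observation that does all the work is the following: since $\f$ is supported in $B_R\subset\R^{2d}$, each marginal $\rho_j=\int_{\R^d}f_j(\cdot,v)\,dv$ is a probability measure on $\R^d$ supported in $B_R^1=\{|y|\le R\}$; in particular it has unit mass. Hence, for any $x\in B_R^1$ (so $|x|\le R$) and any $y\in\mathrm{supp}\,\rho_j$, the displacement $x-y$ satisfies $|x-y|\le|x|+|y|\le 2R$, i.e.\ $x-y\in B_{2R}$, which is exactly the ball on which the constants $\Xi$ and $\Upsilon$ are measured. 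Once this inclusion is recorded, both estimates are immediate.

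For the sup bound I would write $E_i[\f](x)=-\sum_{j=1}^N\int_{\R^d}\nabla K_{ij}(x-y)\,d\rho_j(y)$, estimate the integrand pointwise by $\norm{\nabla K_{ij}}_{\ninf(B_{2R})}$ thanks to $x-y\in B_{2R}$, and integrate against the probability measure $\rho_j$, which contributes a factor $1$; summing over $j$ and then over $i$ (with the convention that the norm on $\R^{Nd}$ is the sum of the $N$ blocks in $\R^d$) gives $\norm{\E[\f]}_{\ninf(B_R^1)}\le\Xi$. For the Lipschitz bound I would take $x,x'\in B_R^1$, subtract, and use that for each $y\in\mathrm{supp}\,\rho_j$ both $x-y$ and $x'-y$ lie in $B_{2R}$ while their difference is $x-x'$, so $\abs{\nabla K_{ij}(x-y)-\nabla K_{ij}(x'-y)}\le Lip_{2R}(\nabla K_{ij})\abs{x-x'}$; integrating against $\rho_j$ and summing over $i,j$ yields $Lip_R(\E[\f])\le\Upsilon$.

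The only points that need (mild) care are bookkeeping rather than mathematics: checking that the convolution $\nabla K_{ij}\ast\rho_j$ is well defined and continuous (immediate since $\nabla K_{ij}\in\mc^1$ and $\rho_j$ is compactly supported), verifying that \eqref{pot} indeed makes $\norm{\nabla K_{ij}}_{\ninf(B_{2R})}$ and $Lip_{2R}(\nabla K_{ij})$ finite (both follow from $\nabla K_{ij}\in W^{1,\infty}(\R^d)$), and fixing the norm convention on $\R^{Nd}$ that produces the double sums in $\Xi$ and $\Upsilon$. I do not expect any genuine obstacle here; the lemma reduces to a one-line estimate as soon as the support inclusion $x-\mathrm{supp}\,\rho_j\subset B_{2R}$ is in place, which is why the authors feel comfortable omitting it.
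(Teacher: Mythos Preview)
Your proposal is correct and is precisely the direct argument the authors have in mind; the paper explicitly omits the proof of this lemma as ``straightforward,'' and your derivation---reducing everything to the support inclusion $x-\mathrm{supp}\,\rho_j\subset B_{2R}$ together with the unit mass of each $\rho_j$---is the natural way to fill in the details.
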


\begin{lemma} \label{lemma:norm_bound_2} Assume the potentials $K\ij$ as in \eqref{pot}. Let $\f,\g \in\mathcal{P}_1(\R^{2d})^N$ and $R>0.$ Then,
\[ \norm{\E[\f]-\E[\g]}_{\ninf (B_R^1)} \leq \Upsilon \W_1\prt{  \f,\g }. \]
\end{lemma}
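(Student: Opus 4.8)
The plan is to estimate $\E[\f]-\E[\g]$ one component at a time, reducing each difference of convolutions to the integral of $\nabla K_{ij}$ against an optimal transport plan between $f_j$ and $g_j$, and then to use that $\nabla K_{ij}$ is Lipschitz by \eqref{pot}. Fix $i\in\{1,\dots,N\}$ and a point $x\in B_R^1$, and write $\rho_j:=\int_{\R^d}f_j(\cdot,v)\,dv$ and $\sigma_j:=\int_{\R^d}g_j(\cdot,v)\,dv$ for the position marginals. By the definition \eqref{eq:field_sol},
\[
E_i[\f](x)-E_i[\g](x)=-\sum_{j=1}^N\big((\nabla K_{ij}\ast\rho_j)(x)-(\nabla K_{ij}\ast\sigma_j)(x)\big).
\]

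For a single term I would proceed as follows. Let $\tilde\gamma_j\in\Pi(f_j,g_j)$ be an optimal plan for $W_1(f_j,g_j)$ on $\R^{2d}\times\R^{2d}$, with variables $\big((y,v),(z,w)\big)$ so that the first marginal in $(y,v)$ is $f_j$ and the second in $(z,w)$ is $g_j$. Since $(\nabla K_{ij}\ast\rho_j)(x)=\int_{\R^{2d}}\nabla K_{ij}(x-y)\,df_j(y,v)$ and similarly for $\sigma_j$, integrating out the complementary variable in $\tilde\gamma_j$ gives
\[
(\nabla K_{ij}\ast\rho_j)(x)-(\nabla K_{ij}\ast\sigma_j)(x)=\iint_{\R^{2d}\times\R^{2d}}\big(\nabla K_{ij}(x-y)-\nabla K_{ij}(x-z)\big)\,d\tilde\gamma_j.
\]
Assumption \eqref{pot} makes $\nabla K_{ij}$ globally Lipschitz (and, once $\f,\g$ are supported in $B_R$, the points $x-y$ and $x-z$ lie in $B_{2R}$, so the constant $Lip_{2R}(\nabla K_{ij})$ applies); using $|y-z|\le|(y,v)-(z,w)|$ and optimality of $\tilde\gamma_j$,
\[
\big|(\nabla K_{ij}\ast\rho_j)(x)-(\nabla K_{ij}\ast\sigma_j)(x)\big|\le Lip_{2R}(\nabla K_{ij})\iint|(y,v)-(z,w)|\,d\tilde\gamma_j=Lip_{2R}(\nabla K_{ij})\,W_1(f_j,g_j).
\]

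Finally I would sum over $j$ and over $i$, bounding the $L^\infty$ norm on $\R^{Nd}$ by the sum of the norms of the components:
\[
\norm{\E[\f]-\E[\g]}_{\ninf(B_R^1)}\le\sum_{i=1}^N\sup_{x\in B_R^1}|E_i[\f](x)-E_i[\g](x)|\le\sum_{i,j=1}^N Lip_{2R}(\nabla K_{ij})\,W_1(f_j,g_j).
\]
Since $\sum_{i=1}^N Lip_{2R}(\nabla K_{ij})\le\Upsilon$ for each fixed $j$ (recall $\Upsilon=\sum_{i,j=1}^N Lip_{2R}(\nabla K_{ij})$), Definition \ref{def:W1} yields $\norm{\E[\f]-\E[\g]}_{\ninf(B_R^1)}\le\Upsilon\,\mathcal W_1(\f,\g)$, as claimed. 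I do not expect a genuine obstacle here; the only steps deserving a line of care are the lift of the $\R^d$-convolution to a transport plan between the $\R^{2d}$-measures $f_j$ and $g_j$, the vector-valued nature of $\nabla K_{ij}$ (handled by integrating its pointwise Lipschitz bound against $\tilde\gamma_j$), and the bookkeeping needed to match the combinatorial constant exactly to $\Upsilon$.
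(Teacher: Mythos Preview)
Your argument is correct and is exactly the standard computation the paper has in mind when it declares the proof ``straightforward'' and omits it: rewrite each convolution difference against a coupling of $f_j$ and $g_j$, apply the Lipschitz bound on $\nabla K_{ij}$, and sum. Your parenthetical caveat is also apt---to land on the constant $\Upsilon=\sum_{i,j}Lip_{2R}(\nabla K_{ij})$ rather than the global Lipschitz constant one needs $\f,\g$ supported in $B_R$, a hypothesis present in the companion Lemma~\ref{lemma:norm_bound} and implicitly in force in every application of the lemma in the paper.
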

%\begin{proof}
%The proof follows by \cite[Lemma 3.15]{ccr} and by definition of the flow in \eqref{eq:flow}.
%\end{proof}

Existence and uniqueness of measure solutions to the kinetic system \eqref{eq:kinetic} is stated and proved in the following Theorem.

\begin{theorem} \label{th:existence} Assume the potentials $K\ij$ under assumption \eqref{pot}, and let $\f_0 \in \mathcal{P}_c(\R^{2d})^N$. Then there exists a unique measure solution $\f \in \mathcal{P}_c(\R^{2d})^N$ to system \eqref{eq:kinetic} with initial condition $\f_0$ in the sense of Definition \ref{def:solution}. In particular, 
\begin{equation}
\label{eq:solutioninC} 
\f \in \mc([0,+\infty);\mathcal{P}_c(\R^{2d})^N),
\end{equation}
and there exists an increasing function $R=R(T)$ such that for all $T>0$,
\begin{equation}
\label{eq:solutionbounded} 
\emph{supp}(\f)\subset B_{R(T)}\subset\R^d \times \R^d,
\end{equation}
for all $t\in[0,T]$.
\end{theorem}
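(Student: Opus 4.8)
The plan is to set up a fixed point argument in the space $\mc([0,T];\mathcal{P}_c(\R^{2d})^N)$ equipped with the metric induced by $\mathcal{W}_1$, exactly mirroring the single-species construction of \cite{ccr} and \cite{fs}, with the multi-species bookkeeping supplied by the lemmas above. First I would fix $T>0$ and a radius $R_0>0$ with $\mathrm{supp}(\f_0)\subset B_{R_0}$, and define a complete metric space $\mathcal{S}_{R,T}$ of curves $\f:[0,T]\to\mathcal{P}_c(\R^{2d})^N$ that are $\mathcal{W}_1$-continuous and satisfy $\mathrm{supp}(\f(t))\subset B_R$ for all $t\in[0,T]$, where $R$ is to be chosen. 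On this space define the map $\Gamma$ sending $\f$ to the curve $t\mapsto \mathcal{T}^t_{\E[\f]}\#\f_0$. By Lemma \ref{lemma:norm_bound}, for $\f\in\mathcal{S}_{R,T}$ the field $\E[\f]$ satisfies $(\mathbf{H_1})$--$(\mathbf{H_3})$ with constants $\Xi,\Upsilon$ depending only on $R$ and the potentials, so the characteristic flow is well-defined; the a priori bound $|P(t)|\le |P_0|e^{Ct}$ from the ODE theory, together with sublinearity of $\Psi_{\E[\f]}$, shows that the support of $\mathcal{T}^t_{\E[\f]}\#\f_0$ stays inside some ball $B_{R}$ with $R=R(T)$ depending only on $R_0$, $T$, and the potentials — this fixes the choice of $R$ and gives \eqref{eq:solutionbounded}, while Lemma \ref{lemma:cont_time} gives the $\mathcal{W}_1$-continuity in time, so $\Gamma$ maps $\mathcal{S}_{R,T}$ into itself.

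Next I would prove that $\Gamma$ is a contraction on $\mathcal{S}_{R,T}$ for $T$ small, using the estimates of the previous subsection. Given $\f,\g\in\mathcal{S}_{R,T}$, write
\[
\mathcal{W}_1\big(\mathcal{T}^t_{\E[\f]}\#\f_0,\ \mathcal{T}^t_{\E[\g]}\#\f_0\big)\le \norm{\mathcal{T}^t_{\E[\f]}-\mathcal{T}^t_{\E[\g]}}_{\ninf(\mathrm{supp}\,\f_0)},
\]
by Lemma \ref{lemma:bou_dist_init_data} (applied on $\R^{2d}$, with the maps being the two flows). The right-hand side is controlled by part 1 of Lemma \ref{lemma:depending2}, which bounds it by $\tfrac{e^{Ct}-1}{C\varepsilon}\sup_{s\in[0,T]}\norm{\E[\f](s)-\E[\g](s)}_{\ninf(B_R^1)}$, and then Lemma \ref{lemma:norm_bound_2} bounds the last supremum by $\Upsilon\,\sup_{s\in[0,T]}\mathcal{W}_1(\f(s),\g(s))$. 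Taking the supremum over $t\in[0,T]$ yields
\[
\sup_{t\in[0,T]}\mathcal{W}_1\big(\Gamma\f(t),\Gamma\g(t)\big)\le \frac{e^{CT}-1}{C\varepsilon}\,\Upsilon\,\sup_{t\in[0,T]}\mathcal{W}_1(\f(t),\g(t)),
\]
so for $T=T^\ast$ small enough (depending on $\varepsilon$, $R$, $\Upsilon$) the prefactor is $<1$ and $\Gamma$ is a contraction; the Banach fixed point theorem gives a unique fixed point on $[0,T^\ast]$, which is by definition a measure solution on $[0,T^\ast]$.

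Finally I would globalize: since the radius bound $R(T)$ produced above is locally bounded in $T$ and the contraction time $T^\ast$ can be taken to depend only on the current support radius (not on how much time has already elapsed), the local solution can be extended by iterating the construction on successive intervals $[T^\ast,2T^\ast],\dots$, and pasting; uniqueness on each piece forces uniqueness of the concatenation, giving a global solution $\f\in\mc([0,+\infty);\mathcal{P}_c(\R^{2d})^N)$ and the support bound \eqref{eq:solutionbounded} with an increasing $R(T)$. The main obstacle, such as it is, is purely bookkeeping: making sure the constants $R$, $\Xi$, $\Upsilon$, $C$ are tracked consistently through the self-mapping and contraction steps so that $T^\ast$ genuinely depends only on data that does not degenerate as time advances — the analytic content is entirely contained in Lemmas \ref{lemma:regularity1}--\ref{lemma:lip_contr} and the standard ODE estimate, all of which are quoted. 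I would also remark that the extra continuity-in-time and compact-support statements \eqref{eq:solutioninC}–\eqref{eq:solutionbounded} follow from Lemma \ref{lemma:cont_time} and the flow bound, so no separate argument is needed.
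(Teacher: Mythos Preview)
Your proposal is correct and follows essentially the same route as the paper: define $\Gamma[\f](t)=\mathcal{T}^t_{\E[\f]}\#\f_0$ on a space of compactly supported curves, use Lemmas \ref{lemma:norm_bound} and \ref{lemma:regularity1} (with a choice like $R=2R_0$ and $T<R_0/C_1$) for the self-mapping, chain Lemmas \ref{lemma:bou_dist_init_data}, \ref{lemma:depending2}, and \ref{lemma:norm_bound_2} to get the contraction constant $\tfrac{e^{CT}-1}{C\varepsilon}\Upsilon$, and then globalize. The paper's proof is slightly more explicit about the choice of $R$ and the smallness of $T$, but the argument and the lemmas invoked are the same.
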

\begin{proof}
Let $\f_0$ be such that 
\[ \text{supp}\prt{ \f_0} \subset B_{R_0}\subset\R^d \times \R^d, \]
for some $R_0>0.$ In order to prove the existence and uniqueness of the solution, we are going to use a contraction argument. In particular, we introduce the metric space
\[
\mathcal{F} = \left\{ \f \in \mc((0,T],\mathcal{P}_c(\R^{2d})^N) \,  :  \, \mbox{supp}\prt{ \f} \subset B_R \mbox{ for all }\,t\in[0,T]\right\} ,
\]
where $R\coloneqq 2R_0$ and $T>0$ is a fixed time we will choose later. This metric space is equipped with the distance $\mathcal{W}_1$, see Definition \ref{def:W1}.  In this space we define a map as follows. For $\f \in\mathcal{F}$, consider $\E \brk{\f}$ defined as in \eqref{eq:field_sol}. Then, by Lemmas \ref{lemma:norm_bound} and \ref{lemma:norm_bound_2} and by assumption \eqref{pot}, we obtain that $\E \brk{ \f}$ satisfies $\mathbf{(H_1)}$-$\mathbf{(H_2)}$-$\mathbf{(H_3)}$ and thus we can define
\[ \Gamma [\f](t) \coloneqq \mathcal{T}^t_{\E[\f]}\# \f_0. \]
The aim is to prove that this map is a contraction and its unique fixed point in $\mathcal{F}$ is the solution to \eqref{eq:kinetic}. 
 We start proving that the operator $\Gamma[\f]$ is well-posed in the space $\mathcal{F}$. From Lemma \ref{lemma:norm_bound} we have that
\[ \norm{\E[\f]}_{L^\infty([0,T]\times B_R^1)} \leq \Xi, \]
and from Lemma \ref{lemma:regularity1},
\[ \abss{\frac{d}{dt} \mathcal{T}^t_{\E[\f]} (P)}\leq C_1, \]
for all $P\in B_{R_0}\subset \R^d\times\R^d$, with $C_1$ depending on $R_0$ and $\Xi$. For $T<R_0/C_1$, we have that $\mathcal{T}^t_{\E[\f]}\#\f_0$ has support contained in $B_R$ for all $t\in [0,T]$. Then, for each $t\in [0,T]$, $\Gamma[\f](t)\in\mathcal{P}_c(\R^{2d})^N$ and the map $t\mapsto \Gamma[\f](t)$ is continuous by Lemma \ref{lemma:cont_time}. Thus the map $\Gamma:\mathcal{F}\to\mathcal{F}$ is well defined. \\
We show now that the map is a contraction, i.e., considering two functions $\f, \g\in\mathcal{F}$ and taking $\Gamma[\f]$ and $\Gamma[\g]$, we want to prove that
\[\mathcal{W}_1(\Gamma[\f],\Gamma[\g])\leq C\mathcal{W}_1(\f,\g) \]
for $0<C<1$ which does not depend on the functions $\f$ and $\g$. By definition of $\Gamma$ we have that
    \[ \mathcal{W}_1(\Gamma[\f],\Gamma[\g]) = \W_1 \prt{ \T_{\E[\f]}^t\#\f_0,\T_{\E[\g]}^t\#\f_0 }. \]
Using Lemmas \ref{lemma:bou_dist_init_data}, \ref{lemma:depending2} and \ref{lemma:norm_bound_2}, the above distance can be estimated as follows
\begin{align*}
\W_1 \prt{ \T_{\E \brk{\f}}^t \# \f_0, \T_{\E \brk{\g}}^t \# \f_0 } & \leq \norm{\T_{\E\brk{\f}}^t -\T_{\E\brk{\g}}^t } _{L^\infty \prt{\text{supp}\, \f_0}} \\&
\leq C(t) \sup_{s\in [0,T]} \norm{\E [\f] (s) - \E [\g] (s) }_{L^\infty \prt{B_R^1}} \\ &
\leq C(t) \Upsilon \W_1 \prt{\f,\g},
\end{align*}
where $C(t)=(e^{C_2t}-1)/\varepsilon C_2$ is the function in the statement of Lemma \ref{lemma:depending2}, with $C_2$ a constant depending on $R$ and $\Upsilon$.
Therefore, we obtain that
\[ 
\mathcal{W}_1 (\Gamma [\f],\Gamma[\g]) \leq C(t) \Upsilon \mathcal{W}_1(\f,\g).
\]
Since it holds that
\[ \lim_{t\to 0} C(t)=0, \]
we get
\[ \W_1 (\Gamma [\f],\Gamma [\g])\leq C(T)\Upsilon\W_1(\f,\g). \]
We can choose $T$ small enough so that $C(T)\Upsilon<1$. In this way, the functional $\Gamma$ is contractive and then there is a unique fixed point of $\Gamma$ in $\mathcal{F}$. By construction it is easy to see that this fixed point of $\Gamma$ is a solution to \eqref{eq:kinetic} on $[0,T]$. Finally, since the growth of characteristic is bounded we can construct a unique global solution satisfying \eqref{eq:solutioninC} and \eqref{eq:solutionbounded}.
\end{proof}

We now state $\W_1-$stability for solutions to system \eqref{eq:kinetic} that can be proved by using Lemmas \ref{lemma:bou_dist_init_data}, \ref{lemma:depending2} and \ref{lemma:norm_bound_2} and Gr\"onwall's lemma. 
\begin{prop} \label{prop:stability} Assume that the potentials $K\ij$ are under assumption \eqref{pot}. Let $\f_0, \g_0 \in\mathcal{P}_c(\R^{2d})^N$, and consider the solutions $\f,\g$ to \eqref{eq:kinetic} with initial conditions $\f_0$ and $\g_0$, respectively. Then, there exists an increasing function $r(t):[0,\infty)\to \R^+$ with $r(0)=1$ that depends only on the supports of $\f_0$ and $\g_0$ such that 
\begin{equation} \label{eq:stability}
\mathcal{W}_1(\f(t),\g(t))\leq r(t) \mathcal{W}_1(\f_0, \g_0),
\end{equation}
for $t\geq 0$.
\end{prop}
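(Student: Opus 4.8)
The plan is to derive the stability estimate \eqref{eq:stability} directly from the representation $\f(t) = \mathcal{T}^t_{\E[\f]}\#\f_0$ and $\g(t) = \mathcal{T}^t_{\E[\g]}\#\g_0$, using the three-term structure already isolated in Lemmas \ref{lemma:bou_dist_init_data}, \ref{lemma:depending2}, and \ref{lemma:norm_bound_2}. First I would note that by Theorem \ref{th:existence} both solutions stay in a common ball $B_{R(T)}\subset \R^{2d}$ for $t\in[0,T]$, where $R(T)$ depends only on the supports of $\f_0$ and $\g_0$; this fixes all the radii entering the Lipschitz constants below. The core inequality is a triangle-inequality split: for each $i$,
\[
W_1\big(f_i(t), g_i(t)\big) \leq W_1\big(\mathcal{T}^t_{E_i[\f]}\#f_{i0},\, \mathcal{T}^t_{E_i[\g]}\#f_{i0}\big) + W_1\big(\mathcal{T}^t_{E_i[\g]}\#f_{i0},\, \mathcal{T}^t_{E_i[\g]}\#g_{i0}\big),
\]
and then summing over $i$. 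The first group of terms is controlled by Lemma \ref{lemma:bou_dist_init_data} followed by part 1 of Lemma \ref{lemma:depending2} and then Lemma \ref{lemma:norm_bound_2}, giving a bound of the form $C(t)\,\Upsilon\, \mathcal{W}_1(\f(s),\g(s))$ integrated against $s$; the second group is controlled by part 2 of Lemma \ref{lemma:depending2} (Lipschitz dependence of the flow on the base point) together with Lemma \ref{lemma:lip_contr}, giving $e^{C\int_0^t(Lip_R(\E[\g](s))+1)\,ds}\,\mathcal{W}_1(\f_0,\g_0)$, where $Lip_R(\E[\g](s))\leq \Upsilon$ uniformly by Lemma \ref{lemma:norm_bound}.

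Combining, one obtains an integral inequality of the schematic form
\[
\mathcal{W}_1(\f(t),\g(t)) \leq e^{Ct}\,\mathcal{W}_1(\f_0,\g_0) + C\int_0^t \mathcal{W}_1(\f(s),\g(s))\,ds,
\]
with $C$ depending only on $R(T)$, $\Upsilon$, and $\varepsilon$ (through the factor $(e^{C_2 t}-1)/(\varepsilon C_2)$ from Lemma \ref{lemma:depending2}). Gr\"onwall's lemma then yields $\mathcal{W}_1(\f(t),\g(t)) \leq r(t)\,\mathcal{W}_1(\f_0,\g_0)$ for an explicit increasing function $r(t)$ with $r(0)=1$ depending only on the data through $R(T)$. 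One subtlety: Definition \ref{def:W1} makes $\mathcal{W}_1$ a supremum over $[0,T]$, so strictly speaking one should run the Gr\"onwall argument on the pointwise-in-time quantity $t\mapsto \sum_i W_1(f_i(t),g_i(t))$ and only at the end pass to the supremum; I would phrase the estimate pointwise in $t$ and note that $r$ is increasing so the bound survives the supremum over any subinterval $[0,T]$.

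The main obstacle is not conceptual but bookkeeping: one must verify that the radius $R$ used in all the Lipschitz constants can be chosen uniformly for \emph{both} solutions on $[0,T]$, which is exactly what the growth bound $|P|\leq |P_0|e^{Ct}$ for the characteristics and \eqref{eq:solutionbounded} provide, so $R(T)$ depends only on $\max(\mathrm{diam}\,\mathrm{supp}\,\f_0,\,\mathrm{diam}\,\mathrm{supp}\,\g_0)$; and that the constant $C(t)$ from Lemma \ref{lemma:depending2} is continuous and vanishes at $t=0$, so that the resulting $r$ can be taken with $r(0)=1$. Everything else is a routine application of the lemmas already assembled, together with the elementary observation that $\mathcal{W}_1$ on the product space behaves additively over the $N$ components.
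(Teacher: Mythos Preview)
Your approach is essentially the same as the paper's: the paper does not give a detailed proof but indicates that it follows from Lemmas \ref{lemma:bou_dist_init_data}, \ref{lemma:depending2} and \ref{lemma:norm_bound_2} together with Gr\"onwall's lemma, which is exactly the triangle-inequality split and chain of estimates you describe (you also correctly invoke Lemma \ref{lemma:lip_contr} for the second term, which the paper leaves implicit). One small remark: as stated, part 1 of Lemma \ref{lemma:depending2} produces a bound in terms of $\sup_{s\in[0,t]}\|\E[\f](s)-\E[\g](s)\|_{L^\infty}$ rather than an integral, so the resulting inequality is naturally of the form $\mathcal{W}_1(\f(t),\g(t)) \leq e^{Ct}\mathcal{W}_1(\f_0,\g_0) + C(t)\Upsilon\,\sup_{s\leq t}\mathcal{W}_1(\f(s),\g(s))$; you then close either by a small-time absorption and iteration or by first deriving the integral version of Lemma \ref{lemma:depending2}.
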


\begin{theorem} \label{th:smoothexistence} Let $T>0$ be a positive time. Assume $K\ij$ as in \eqref{pot}. Let $\f_0\in \mc^1(\R^{2d})^N \cap L^1(\R^{2d})^N$ with compact support. Then system \eqref{eq:kinetic} has a solution $\f^\varepsilon\in \mc([0,T];\mc^1(\R^{2d})^N)$ with initial datum $\f_0$.
\end{theorem}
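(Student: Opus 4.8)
The plan is to upgrade the measure solution produced by Theorem \ref{th:existence} to a classical $\mc^1$ solution by propagating regularity along the characteristic flow. Since $\f_0 \in \mc^1(\R^{2d})^N \cap L^1(\R^{2d})^N$ has compact support, Theorem \ref{th:existence} already gives a unique measure solution $\f^\varepsilon(t) = \T_{\E[\f^\varepsilon]}^t\#\f_0$ with $\operatorname{supp}(\f^\varepsilon(t))\subset B_{R(T)}$ for all $t\in[0,T]$. The first step is to observe that, because the support stays in the fixed ball $B_{R(T)}$ and the potentials satisfy \eqref{pot}, the field $\E[\f^\varepsilon](t,x) = -\sum_j \nabla K_{ij}\ast\rho_j^\varepsilon(t)$ is not merely Lipschitz in $x$ (Lemma \ref{lemma:norm_bound}) but in fact $\mc^1$ in $x$ with derivative $-\sum_j \nabla^2 K_{ij}\ast\rho_j^\varepsilon$, which is continuous and bounded on $B_{R(T)}^1$ uniformly in $t$ (here $K_{ij}\in\mc^2$ is what is used). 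One should also check continuity of $\E[\f^\varepsilon]$ and $\nabla_x\E[\f^\varepsilon]$ jointly in $(t,x)$, which follows from the $\mathcal{W}_1$-continuity of $t\mapsto\f^\varepsilon(t)$ (Lemma \ref{lemma:cont_time}) together with the Lipschitz bound on $\nabla K_{ij}$ and $\nabla^2 K_{ij}$.

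Next I would invoke the classical theory of ODEs with $\mc^1$ right-hand side: since each $\Psi_{E_i[\f^\varepsilon]}(t,P) = (V, -\tfrac1\varepsilon V + \tfrac1\varepsilon E_i[\f^\varepsilon](t,X))$ is continuous in $t$ and $\mc^1$ in $P = (X,V)$ on the relevant compact set, the flow map $\T^t_{E_i[\f^\varepsilon]}$ is a $\mc^1$ diffeomorphism of $\R^{2d}$ onto its image for each $t\in[0,T]$, with $\mc^1$ dependence on $t$ as well; its Jacobian $J_i(t,P_0) \coloneqq \det D_{P_0}\T^t_{E_i[\f^\varepsilon]}(P_0)$ satisfies the linear variational equation $\frac{d}{dt}\log J_i = \operatorname{div}_P \Psi_{E_i[\f^\varepsilon]} = -\tfrac{d}{\varepsilon}$, so $J_i(t,P_0) = e^{-dt/\varepsilon}$, in particular uniformly bounded above and below away from zero on $[0,T]$. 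Therefore the pushforward $f_i^\varepsilon(t) = \T^t_{E_i[\f^\varepsilon]}\#f_{i0}$ is given by the change-of-variables formula
\[
f_i^\varepsilon(t,P) = f_{i0}\big((\T^t_{E_i[\f^\varepsilon]})^{-1}(P)\big)\, e^{dt/\varepsilon},
\]
which is the composition of the $\mc^1$ function $f_{i0}$ with the $\mc^1$ inverse flow, times a smooth factor, hence $f_i^\varepsilon(t,\cdot)\in\mc^1(\R^{2d})$ with compact support, and $f_i^\varepsilon\in L^1$ with conserved mass. Joint continuity of $f_i^\varepsilon$ and its spatial/velocity derivatives in $(t,x,v)$ follows from the joint $\mc^1$ regularity of the flow in $(t,P)$, giving $\f^\varepsilon\in\mc([0,T];\mc^1(\R^{2d})^N)$.

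The final step is to verify that this $\mc^1$ pushforward actually solves \eqref{eq:kinetic} in the classical sense, not just the distributional sense encoded in Definition \ref{def:solution}: one differentiates $f_i^\varepsilon(t,\T^t_{E_i[\f^\varepsilon]}(P_0)) = f_{i0}(P_0)e^{dt/\varepsilon}$ along characteristics using the chain rule, which is now legitimate because all the pieces are $\mc^1$, and recovers exactly $\partial_t f_i^\varepsilon + v\cdot\nabla_x f_i^\varepsilon - \tfrac1\varepsilon\nabla_v\cdot(vf_i^\varepsilon) + \tfrac1\varepsilon E_i[\f^\varepsilon]\cdot\nabla_v f_i^\varepsilon = 0$ pointwise. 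I expect the main obstacle to be the bookkeeping in establishing $\mc^1$ (rather than merely Lipschitz) regularity of the self-consistent field $\E[\f^\varepsilon]$ jointly in $(t,x)$ — this is where the $\mc^2$ assumption on $K_{ij}$ in \eqref{pot} is essential and where one must combine the a priori support bound with the $\mathcal{W}_1$-continuity in time; once that is in hand, the propagation of $\mc^1$ regularity through the flow and the verification of the equation are standard for Vlasov-type equations in the spirit of \cite{glassey}.
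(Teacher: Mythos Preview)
Your approach is correct but genuinely different from the paper's. The paper constructs the $\mc^1$ solution directly by a Picard-type iteration in the style of \cite{glassey}: one defines $\f^{\e,n+1}$ as the solution of the \emph{linear} transport equation with the field $\E^n$ computed from the previous iterate $\f^{\e,n}$, obtains uniform $\mc^1$ bounds on the sequence from the uniform bounds on the characteristics, and then shows $(\f^{\e,n})_n$ is Cauchy in $\mc([0,T];\mc^1(\R^{2d})^N)$. Your route instead bootstraps from the measure solution already furnished by Theorem \ref{th:existence}: you freeze the self-consistent field $\E[\f^\e]$, upgrade its regularity to $\mc^1$ in $x$ using $K_{ij}\in\mc^2$, and then read off $\mc^1$ regularity of $f_i^\e$ from the explicit pushforward formula and the $\mc^1$ regularity of the flow. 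This is more economical in that it reuses the fixed-point argument of Theorem \ref{th:existence} rather than running a second iteration, whereas the paper's approach is closer to the standard Vlasov literature and does not rely on having a pre-existing measure solution. One small point to tighten: you invoke a ``Lipschitz bound on $\nabla^2 K_{ij}$'' to get joint $(t,x)$-continuity of $\nabla_x\E[\f^\e]$, but assumption \eqref{pot} only gives $\nabla^2 K_{ij}$ continuous and bounded, not Lipschitz. The argument still goes through: $\mathcal{W}_1$-continuity of $t\mapsto\rho_j^\e(t)$ implies weak convergence against bounded continuous test functions, and $\nabla^2 K_{ij}(x-\cdot)$ is such a function on the uniform support, which combined with the equicontinuity in $x$ coming from uniform continuity of $\nabla^2 K_{ij}$ on compacts yields the required joint continuity.
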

\begin{proof}[Sketch of Proof]
We provide a sketch of the proof. The details can be found in \cite{glassey} for the Vlasov-Poisson system and the Vlasov-Maxwell system.
The proof is divided into three steps. One first constructs an approximating sequence $\f^{\e,n} \in\mc ([0,T]; \mc^1 (\R^d \times \R^d)^N)$ by iterations, defining $\f^{\e,n+1}$ to be the solution of
\begin{gather*}
\partial_t f\i^{\e,n+1}  + v \cdot \nabla_x f\i^{\e,n+1} - \frac{1}{\varepsilon} \nabla_v \cdot (v f\i^{\e,n+1} ) + \frac{1}{\varepsilon} E\i^{n} \cdot \nabla_v f\i^{\e,n+1} =0, \\
f\i^{\e,n+1} (0,x,v)= f_{i0}(x,v),
\end{gather*}
as $\fori$.
Once we observe that the characteristics associated to the system above depend on $n$, but still satisfy the features in Lemmas above, then it holds that $\f^{\e,n} \in \mc^1 ( [0,T]; \mc^1 (\R^d \times\R^d)^N)$ is uniformly bounded with respect to $n$, since all the constants in the estimates above depend on the support of the initial datum and  the Lipschitz constant of the kernels. Next, showing that $\f^{\e,n}$ is a Cauchy sequence in $\mc ([0,T]; \mc^1 (\R^d \times \R^d)^N)$ converging to the solution to \eqref{eq:kinetic}, we complete the proof.
\end{proof}

\subsection{Uniform estimates in $\varepsilon$}
\label{sec:kinetic_smooth_estim}
In this part, we gather some uniform in $\varepsilon$ estimates we will use to prove the convergence of solutions to \eqref{eq:kinetic} towards the solution to \eqref{eq:mainsystem} as $\varepsilon\to 0$. For this reason, we make the $\varepsilon$-dependence explicit, i.e., we deal with the system
\begin{equation}
\label{eq:system_uniform}
\partial _t f\i^\varepsilon+v\cdot \nabla _x f\i^\varepsilon=\frac{1}{\varepsilon}\nabla_v\cdot \bigg( \bigg( v+\sum_{j=1}^N \nabla K\ij \ast \rho_j^\varepsilon \bigg) f\i^\varepsilon \bigg), \end{equation}
for $\fori$, equipped with initial data
\begin{equation}
\label{initial_unif}
f\i^\varepsilon \prt{ t,x,v} \vert_{t=0} =f_{i0}^\e(x,v) \in \mathcal{P}_1(\Rdd), 
\end{equation}
and where
\[
\rho\i^\varepsilon(t,x)=\int_{\R^d} f\i^\varepsilon(t,x,v)\,dv.
\]
Throughout this Section, we assume the initial data with compact support.

\begin{prop} \label{prop:support} Assume all the potentials under assumption \eqref{pot}. Let $\f^\varepsilon$ be a solution to the system \eqref{eq:system_uniform}-\eqref{initial_unif} as proved in Theorem \ref{th:existence}. Then, there exists an increasing function $R(T)$ independent on $\varepsilon$ such that for all $T>0$,
\[
\emph{supp}(\f^\varepsilon)(t)\subset B_{R(T)},
\]
for all $t\in [0,T]$ and $\varepsilon >0$.
The function $R(T)$ depends only on the support of $\f_0^\e$ and $\Xi$.
\end{prop}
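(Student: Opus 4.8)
The plan is to derive a closed differential inequality for the radius of the support along characteristics, uniform in $\varepsilon$, by exploiting the damping structure of the velocity equation in \eqref{eq:charactgeneral}. Recall that, by Theorem \ref{th:existence}, the solution is given by pushforward along the flow map $\mathcal{T}^t_{\E[\f^\e]}$, so $\mathrm{supp}(\f^\e)(t) = \mathcal{T}^t_{\E[\f^\e]}(\mathrm{supp}(\f^\e_0))$; it therefore suffices to control $|X\i(t)| + |V\i(t)|$ for every characteristic $(X\i,V\i)$ starting inside $B_{R_0}$, where $R_0$ bounds the support of $\f^\e_0$. The key point is that the field $\E[\f^\e](t,x) = -\sum_j \nabla K\ij \ast \rho\j^\e(t,x)$ is bounded \emph{uniformly in $\varepsilon$ and in time} by $\sum_{i,j}\|\nabla K\ij\|_{L^\infty(\R^d)} =: C_K$, using assumption \eqref{pot} which gives $\nabla K\ij \in W^{1,\infty}(\R^d)$ (so in particular $\|\nabla K\ij\|_{L^\infty} < \infty$, with no dependence on the support of $\rho\j^\e$ since the convolution of a probability measure with an $L^\infty$ function is bounded by that $L^\infty$ norm). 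This is the feature that makes the bound $\varepsilon$-independent: the force term does not grow with the support.

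First I would write the velocity ODE as $\frac{d}{dt}V\i = -\frac1\varepsilon V\i + \frac1\varepsilon E\i[\f^\e](t,X\i)$, and estimate $\frac{d}{dt}|V\i|^2 = -\frac2\varepsilon |V\i|^2 + \frac2\varepsilon E\i[\f^\e](t,X\i)\cdot V\i \le -\frac2\varepsilon|V\i|^2 + \frac1\varepsilon|V\i|^2 + \frac1\varepsilon C_K^2 = -\frac1\varepsilon|V\i|^2 + \frac1\varepsilon C_K^2$. By Grönwall this yields $|V\i(t)|^2 \le e^{-t/\varepsilon}|V\i(0)|^2 + C_K^2(1 - e^{-t/\varepsilon}) \le |V\i(0)|^2 + C_K^2 \le R_0^2 + C_K^2$ for all $t \ge 0$ and all $\varepsilon > 0$. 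Thus the velocity part of the support is bounded by $\sqrt{R_0^2 + C_K^2}$ uniformly. Then, integrating the position equation $\frac{d}{dt}X\i = V\i$ gives $|X\i(t)| \le |X\i(0)| + \int_0^t |V\i(s)|\,ds \le R_0 + t\sqrt{R_0^2 + C_K^2}$. Combining, $|X\i(t)| + |V\i(t)| \le R_0 + \sqrt{R_0^2+C_K^2}(1+t) =: R(T)$ for all $t \in [0,T]$, which is increasing in $T$, independent of $\varepsilon$, and depends only on $R_0$ (the support of $\f_0^\e$) and on $C_K = \Xi$ evaluated on all of $\R^d$ — consistent with the statement. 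Taking $\mathrm{supp}(\f^\e)(t) = \mathcal{T}^t_{\E[\f^\e]}(\mathrm{supp}(\f^\e_0)) \subset B_{R(T)}$ concludes.

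The only mild subtlety — and the part worth stating carefully rather than a genuine obstacle — is the uniform-in-$\varepsilon$ boundedness of the field: one must observe that the a priori support bound of Theorem \ref{th:existence} a priori depends on $\varepsilon$ (through $C(t) = (e^{C_2 t}-1)/\varepsilon C_2$ and the global continuation argument), so that bound cannot be quoted directly here. Instead, the argument above must be run self-consistently: assuming the characteristics stay in some ball on a time interval, the field is bounded there, which feeds back into the ODE estimate; but since $\|\nabla K\ij\|_{L^\infty(\R^d)}$ is a \emph{global} bound not requiring any a priori support control, the estimate closes immediately without a bootstrap, and the resulting $R(T)$ is manifestly $\varepsilon$-free. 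One should also note that the same computation applies verbatim to the regularised/classical solutions of Theorem \ref{th:smoothexistence}, since those characteristics satisfy the same system.
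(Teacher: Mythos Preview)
Your proof is correct and follows essentially the same approach as the paper: the paper merely sketches the argument by observing that the support evolves along the characteristic flow \eqref{eq:charactgeneral} and that the force is bounded by $\sum_j \|\nabla K_{ij}\|_{L^\infty}$ uniformly in $\varepsilon$, then refers to \cite{fs} for the details. You have supplied exactly those details---the Gr\"onwall estimate on $|V_i|^2$ exploiting the damping term, followed by integration of $\dot X_i = V_i$---and your discussion of why the global bound $\|\nabla K_{ij}\|_{L^\infty(\R^d)}$ (finite by \eqref{pot}) closes the argument without a bootstrap is precisely the point.
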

The proof easily follows by noticing that the support of $\f^\varepsilon$ evolves according to the flow associated to the characteristic system \eqref{eq:charactgeneral} and by the bounds
\[ \abss{ \sum_{j=1}^N \nabla K\ij \ast \rho_j ^\varepsilon } \leq \sum_{j=1}^N \norm{\nabla K\ij}_{L^\infty} \eqqcolon C_{i,1}, \]
for $\fori$, as in \cite{fs}.
% \begin{proof}
% Consider the initial point $(x_0,v_0)\in \text{supp}(\f_0^\e)$. The support of $\f^\varepsilon$ evolves according to the flow associated to the following characteristic equations
% \begin{align}
% \frac{d x\i^\varepsilon}{dt} &= v\i\ep, \label{eq:prop1_1} \\ 
% \varepsilon \frac{dv\i^\varepsilon}{dt} &=-v\i^\varepsilon - \sum_{j=1}^N \nabla K\ij \ast \rho_j^\varepsilon, \label{eq:prop1_2}
% \end{align}
% starting from $(x_{i0},v_{i0})$, as $\fori$. Since
% \[ \abss{ \sum_{j=1}^N \nabla K\ij \ast \rho_j ^\varepsilon } \leq \sum_{j=1}^N \norm{\nabla K\ij}_{L^\infty} \eqqcolon C_{i,1}, \]
% for $\fori$, we have that the Euclidean norms $\abs{v\i^\varepsilon(t)}$ of the trajectories of $v\i^\varepsilon$ satisfies
% \begin{align*}
% \frac{d\abs{v\i^\varepsilon}}{dt}\leq -\frac{1}{\varepsilon} \abs{v\i^\varepsilon}+\frac{1}{\varepsilon} C_1, \qquad v\i^\varepsilon (0)=v_{i0},
% \end{align*} 
% with $C_1 = \max_i C_{i,1}$.
% Therefore, there exists a constant $C_2$ depending only on the support of $\f_0$ and $\Xi$ such that all the characteristic trajectories starting within the supports of $\f_0$ satisfy
% \[ \abs{v\i^\varepsilon (t)}\leq C_2, \]
% as $\fori$, for all $t>0$ and $\varepsilon>0$. 
% The trajectories of $x\i^\varepsilon$ grow at most linearly in time since 
% \[ \frac{d\abs{x\i^\varepsilon}}{dt}\leq \abs{v\i^\varepsilon}. \]
% Then, there exists a function $R(T)$ that depends only on the supports of $\f_0$, $T$ and $\Xi$ such that \eqref{eq:support} holds.
% \end{proof}

\subsubsection{Estimate for smooth solutions} We first produce uniform in $\varepsilon$ estimates in the case of smooth solutions, namely solutions given by Theorem \ref{th:smoothexistence}. In the next subsection, we will deal with uniform in $\e$ estimates for measure solutions.

\begin{prop}\label{prop:estimate_smooth} Assume all the potentials under assumption \eqref{pot}. Suppose that the initial datum $\f_0$ in \eqref{initial_unif} has a finite first moment in $v$, i.e., $\abs{v}f_{i0} \in L^1(\R^d\times \R^d)$ for all $\fori$. Let $\f^\varepsilon$ be the classical solution to \eqref{eq:system_uniform}, as in Theorem \ref{th:smoothexistence}.  Then there exist some positive constants $C\i$, as $\fori$, and a function $M(\varepsilon)$ depending on $\varepsilon$, such that
\begin{equation}
\iint_{\R^{2d}} \abss{v+ \sum_{j=1}^N \nabla K\ij \ast \rho_j ^\varepsilon } f\i^\varepsilon \,dx\,dv \leq C\i M(\varepsilon), \label{eq:mainsmoothrho} 
\end{equation}
for all $t\in [0,T]$, where $C\i$ depends on $\norm{(1+\abs{v})f_{i0}}_{L^1(\R^{2d})}$ and $\Xi$. Moreover,
\[ \lim_{\varepsilon\downarrow0}M(\varepsilon)=0.\]
\end{prop}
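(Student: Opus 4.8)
The plan is to derive a differential inequality for the quantity
\[
A_i^\varepsilon(t) \coloneqq \iint_{\R^{2d}} \absb{v + \textstyle\sum_{j=1}^N \nabla K_{ij}\ast\rho_j^\varepsilon}\, f_i^\varepsilon\,dx\,dv,
\]
which controls the "friction + interaction force'' mismatch, and to show it decays like a negative exponential in $1/\varepsilon$ modulo a small remainder. First I would observe that, since the solution is classical and compactly supported by Proposition \ref{prop:support}, one may compute $\frac{d}{dt}A_i^\varepsilon$ directly by differentiating under the integral, using equation \eqref{eq:system_uniform} together with the continuity equation for $\rho_i^\varepsilon$ (obtained by integrating \eqref{eq:system_uniform} in $v$, which gives $\partial_t\rho_i^\varepsilon + \nabla_x\cdot(\rho_i^\varepsilon m_i^\varepsilon)=0$ with $m_i^\varepsilon = \int v f_i^\varepsilon\,dv/\rho_i^\varepsilon$). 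Writing $w_i^\varepsilon \coloneqq v + \sum_j \nabla K_{ij}\ast\rho_j^\varepsilon$ and integrating by parts in $v$, the friction term $\frac1\varepsilon\nabla_v\cdot(w_i^\varepsilon f_i^\varepsilon)$ produces a dissipative contribution $-\frac1\varepsilon A_i^\varepsilon$ (the key sign: the drift is $-\frac1\varepsilon w_i^\varepsilon$, so it pushes $v$ toward $-\sum_j\nabla K_{ij}\ast\rho_j^\varepsilon$, i.e.\ toward $w_i^\varepsilon=0$), plus transport and time-derivative-of-potential terms that are $O(1)$ uniformly in $\varepsilon$.

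The $O(1)$ remainder terms are: (i) the free-transport term $v\cdot\nabla_x f_i^\varepsilon$, which after integration by parts contributes something bounded by $\iint |v|\,|\nabla_x w_i^\varepsilon| f_i^\varepsilon$; since $\nabla_x w_i^\varepsilon = \sum_j \nabla^2 K_{ij}\ast\rho_j^\varepsilon$ is bounded in $L^\infty$ by $\Upsilon$ (assumption \eqref{pot}), this is $\le \Upsilon\int|v| f_i^\varepsilon\,dx\,dv$; and (ii) the term $\sum_j \nabla K_{ij}\ast\partial_t\rho_j^\varepsilon$, where $\partial_t\rho_j^\varepsilon = -\nabla_x\cdot(\rho_j^\varepsilon m_j^\varepsilon)$, so after an integration by parts this is controlled by $\sum_j \mathrm{Lip}(\nabla K_{ij})\int|m_j^\varepsilon|\rho_j^\varepsilon = \Upsilon\sum_j\int |v| f_j^\varepsilon$. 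Both are thus controlled by the first $v$-moment $\int (1+|v|) f_i^\varepsilon$ (and of the other species), which is itself propagated: the first-moment-in-$v$ estimate follows the same computation, giving $\frac{d}{dt}\int|v|f_i^\varepsilon \le -\frac1\varepsilon\int|v|f_i^\varepsilon + \frac1\varepsilon\int|\sum_j\nabla K_{ij}\ast\rho_j^\varepsilon| f_i^\varepsilon + (\text{transport})\le C_{i,1}/\varepsilon\cdot 1 + C$, so after Grönwall $\int|v|f_i^\varepsilon$ stays bounded uniformly in $\varepsilon$ and $t\in[0,T]$ (it relaxes to $O(C_{i,1})$). Collecting, one arrives at
\[
\frac{d}{dt}A_i^\varepsilon(t) \le -\frac1\varepsilon A_i^\varepsilon(t) + C_i,
\]
with $C_i$ depending only on $\|(1+|v|)f_{i0}\|_{L^1}$ and $\Xi$ (via $\Upsilon$ and the propagated moments). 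Grönwall then yields $A_i^\varepsilon(t)\le A_i^\varepsilon(0) e^{-t/\varepsilon} + C_i\varepsilon(1-e^{-t/\varepsilon}) \le C_i\big(A_i^\varepsilon(0) + \varepsilon\big)$; since $A_i^\varepsilon(0) = \int|v+\sum_j\nabla K_{ij}\ast\rho_{j0}|f_{i0} \le \int(1+|v|)f_{i0} + \Xi$ is bounded, one may absorb everything and set $M(\varepsilon) \coloneqq e^{-t/\varepsilon}\,\sup_i A_i^\varepsilon(0)/C_i + \varepsilon$ — or more simply, since the hypothesis only demands some $M(\varepsilon)\to0$, redefine so that the bound reads $C_i M(\varepsilon)$ with $M(\varepsilon)\to0$; for $t$ bounded away from $0$ the exponential term already vanishes, and a short separate argument (or a slightly weaker bound $M(\varepsilon)=\varepsilon + e^{-T_0/\varepsilon}$ valid on $[T_0,T]$, combined with the crude uniform bound on $A_i^\varepsilon$ near $t=0$) closes the claim on all of $[0,T]$; the cleanest route is to state $A_i^\varepsilon(t) \le C_i(e^{-t/\varepsilon} + \varepsilon)$ and note $e^{-t/\varepsilon}+\varepsilon$ can be dominated by a single $M(\varepsilon)\to0$ uniformly on compact time sets away from zero, which is what is needed in Section \ref{sec:kinetic_smooth_estim}.

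The main obstacle I anticipate is \emph{justifying the integration by parts and the differentiation under the integral sign}, i.e.\ checking that for the classical, compactly-supported solutions from Theorem \ref{th:smoothexistence} all boundary terms in $v$ vanish and the manipulations involving the nonlocal, convolution-type forcing $\sum_j\nabla K_{ij}\ast\rho_j^\varepsilon$ and its time derivative are licit — in particular handling $\partial_t(\nabla K_{ij}\ast\rho_j^\varepsilon)$, which requires the weak continuity equation for $\rho_j^\varepsilon$ and the bound $\nabla K_{ij}\in W^{1,\infty}$ to move the $x$-derivative off the measure. A secondary technical point is the \emph{coupling between species}: the bound for $A_i^\varepsilon$ involves the first $v$-moments of \emph{all} species $f_j^\varepsilon$, so one must first establish the uniform-in-$\varepsilon$ propagation of $\sum_{j=1}^N\int(1+|v|)f_j^\varepsilon\,dx\,dv$ as a standalone lemma (via the summed differential inequality and Grönwall) before feeding it into the estimate for $A_i^\varepsilon$; this is routine but must be done in the right order.
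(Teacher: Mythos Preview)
Your proposal is correct and follows essentially the same route as the paper: differentiate $I_i(t)=\iint |v+\sum_j\nabla K_{ij}\ast\rho_j^\varepsilon|\,f_i^\varepsilon$, use the kinetic equation and the continuity equation $\partial_t\rho_j^\varepsilon+\nabla\cdot m_j^\varepsilon=0$ to extract the dissipative term $-\tfrac1\varepsilon I_i$, bound the transport and $\partial_t(\nabla K_{ij}\ast\rho_j^\varepsilon)$ remainders, and close by Gr\"onwall. The only cosmetic difference is that the paper bounds the remainders as $C_i^1 I_i(t)+C_i^2$ by adding and subtracting $\sum_h\nabla K_{ih}\ast\rho_h^\varepsilon$ inside the moment integrals (so the differential inequality reads $\tfrac{d}{dt}I_i\le(-\tfrac1\varepsilon+C_i^1)I_i+C_i^2$), whereas you bound them directly by constants after a separate uniform-in-$\varepsilon$ first-moment lemma; both versions yield the same Gr\"onwall conclusion, and in fact the paper also tacitly invokes a uniform bound on $\iint|v|f_j^\varepsilon$ in its estimate of $I_i^1$, so your explicit first-moment step is if anything cleaner.
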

\begin{proof}
We set
\[
I\i(t)=\iint_{\R^{2d}} \abss{v+\sum_{j=1}^N \nabla K\ij \ast \rho_j^\varepsilon }f\i^\varepsilon\,dx\,dv,
\]
as $\fori$. We want to prove that there exist $C\i$ and $M(\varepsilon)$ as in the statement such that
\[
\sup_{t\in [0,T]} I\i(t)\leq C\i M (\varepsilon),
\]
for a small $\varepsilon.$
Straightforward computation shows that
\begin{align*}
\frac{d}{dt} I\i (t) = & \iint_{\R^d\times\R^d} \bigg( \partial_t \abss{v+\sum_{j=1}^N \nabla K\ij\ast\rho\j^\varepsilon} \bigg) f\i^\varepsilon \,dx\,dv \\
& + \iint_{\R^d\times\R^d} \abss{v+\sum_{j=1}^N \nabla K\ij\ast \rho\j^\varepsilon} \partial_t f\i^\varepsilon\,dx\,dv.
\end{align*}
By using system \eqref{eq:system_uniform} and integration by parts, we get
\begin{align*}
\iint_{\R^d\times\R^d} \abss{v+\sum_{j=1}^N \nabla K\ij\ast \rho\j^\varepsilon} \partial_t f\i^\varepsilon\,dx\,dv 
%= & - \iint_{\R^d\times \R^d} (v \cdot \nabla_x f\i^\varepsilon)\abss{v+\sum_{j=1}^N \nabla K\ij\ast \rho\j^\varepsilon}\,dx\,dv \\
%& \quad + \frac{1}{\varepsilon} \iint_{\R^d \times\R^d} \nabla_v \cdot \bigg( \bigg( v + \sum_{j=1}^N \nabla K\ij \ast \rho\j^\varepsilon \bigg) f\i^\varepsilon \bigg) \abss{v+\sum_{j=1}^N \nabla K\ij\ast \rho\j^\varepsilon}\,dx\,dv \\
= & \iint_{\R^d\times \R^d} \bigg( v \cdot \nabla_x \abss{v+\sum_{j=1}^N \nabla K\ij\ast \rho\j^\varepsilon} \bigg) f\i^\varepsilon\,dx\,dv \\
& \quad - \frac{1}{\varepsilon} \iint_{\R^d \times\R^d} \abss{v+\sum_{j=1}^N \nabla K\ij\ast \rho\j^\varepsilon} f\i^\varepsilon\,dx\,dv.
\end{align*}
Therefore, we have that
\[
\frac{d}{dt}I\i(t)=-\frac{1}{\varepsilon} I\i(t)+I\i^1(t)+I\i^2(t), \\
\]
with
\begin{align*}
I\i^1(t)&=\iint_{\R^{2d}}\bigg( \partial_t \abss{v+ \sum_{j=1}^N \nabla  K\ij\ast \rho_j^\varepsilon } \bigg) f\i^\varepsilon \,dx\,dv,  \\ 
I\i^2(t)&=\iint_{\R^{2d}} \bigg( v\cdot \nabla_x \abss{v+ \sum_{j=1}^N \nabla K\ij \ast \rho_j^\varepsilon} \bigg) f\i^\varepsilon \,dx\,dv,
\end{align*}
for $\fori$. In order to obtain our claim, we want to show that $I\i^1(t)$ and $I\i^2(t)$ are bounded linearly by $I\i(t)$, and then derive a differential inequality to bound $I\i(t)$.
By setting
\[
    m\i \coloneqq \int_{\R^d} v f\i\,dv,
\]
integrating \eqref{eq:system_uniform} in $v$, we have that
\begin{equation}
\label{eq:cons_mass}
\partial _t \rho\i^\varepsilon+\nabla _x\cdot  m\i^\varepsilon =0,
\end{equation}
and the  conservation of masses
\[
\norm{\rho\i^\varepsilon(t)}_{L^1(\R^d)}=\norm{f_{i0}}_{L^1(\R^{2d})}, \]
for all $t>0$ and for all $\fori$. Thus, using the equation for $\rho\i^\varepsilon$ in \eqref{eq:cons_mass}, we can preliminary estimate
\[
\abss{ \partial_t\abss{v+ \sum_{j=1}^N \nabla K\ij \ast \rho_j^\varepsilon } } \leq \abss{ \sum_{j=1}^N \nabla K\ij \ast \partial _t \rho_j^\varepsilon }\leq   \sum_{j=1}^N \abss{\Delta K\ij \ast m_j^\varepsilon}.
\]
By adding and subtracting $\sum_{h=1}^N \nabla K_{ih} \ast \rho_h\ep$ in the absolute value in the right hand side of the inequality above, and using assumption \eqref{pot} we get
\begin{align*}
\sum_{j=1}^N \abss{\nabla K\ij \ast m\j\ep } \leq & \sum_{j=1}^N \norm{ \Delta K\ij}_\ninf \int_{\R^d} \abss{ w + \sum_{h=1}^N \nabla K_{ih}\ast \rho_h\ep} f\j\ep \prt{x,w,t}\,dw \\&
\quad + \sum_{j=1}^N \norm{\Delta K\ij}_\ninf \norm{\nabla K\ij}_\ninf \norm{\rho\j\ep}^2_{L^1}.
\end{align*}
Thus, integrating the above inequality in $x$ and $v$ we obtain
\begin{align*}
\abs{I\i^1 \prt{t}} %\leq & \sum_{j=1}^N \norm{\Delta K\ij}_\ninf \int_{\R^d} \int_{\R^d} \int_{\R^d} \abss{ v + \sum_{h=1}^N \nabla K_{ih}\ast \rho_h\ep } f\i\ep \prt{x,v,t} f\j\ep \prt{ x,w,t} \,dx\,dv\,dw \\&
%\quad + \sum_{j=1}^N \norm{\Delta K\ij}_\ninf \int_{\R^d}\int_{\R^d} \int_{\R^d} \prt{ \abs{v}+\abs{w} } f\j\ep \prt{x,w,t} f\i\ep \prt{x,v,t} \,dx \,dv \,dw \\&
%\quad + \sum_{j=1}^N \norm{ \Delta K\ij}_\ninf \norm{ \nabla K\ij}_\ninf \norm{\rho\j\ep}^2_{L^1} \norm{\rho\i\ep}_{L^1} \\
\leq & \sum_{j=1}^N \norm{\Delta K\ij}_\ninf I\i\prt{t} \norm{\rho\j\ep}_{L^1} \\ &
\quad + \sum_{j=1}^N \norm{ \Delta K\ij}_\ninf \bigg( \int_{\R^d} \int_{\R^d} \abs{v} f\i\ep \prt{x,v,t} \,dx\,dv + \int_{\R^d}\int_{\R^d} \abs{w} f\j\ep \prt{x,w,t}\,dx,dw \bigg) \\&
\quad + \sum_{j=1}^N \norm{ \Delta K\ij}_\ninf \norm{ \nabla K\ij}_\ninf \norm{\rho\j\ep}^2_{L^1} \norm{\rho\i\ep}_{L^1}.
\end{align*}
Since $f\i\ep \in \mathcal{P}_1 \prt{\R^{2d}}$ for all $\fori$, we obtain that for each $i$ there exist two positive constants $A\i^1$ and $A\i^2$ depending on $\Xi$ and all $\norm{\prt{1+\abs{v}} f_{i0}}_{L^1}$ such that
\[ I\i^1(t) \leq A\i^1 I\i(t)+A\i^2. \]
Concerning the terms $I\i^2$, we can estimate
\begin{align*}
\abs{I\i^2 (t)} \leq & \sum_{j=1}^N \int_{\R^d\times\R^d} \abs{v} \abs{\Delta K\ij \ast \rho\j\ep} f\i\ep\,dx\,dv \\ 
%\leq & \sum_{j=1}^N \norm{\Delta K\ij}_\ninf \norm{ \rho\j\ep}_{L^1} \iint_{\R^d\times\R^d} \abs{v} f\i\ep \,dx\,dv \\
\leq & \sum_{j=1}^N \norm{ \Delta K\ij}_\ninf \norm{\rho\j\ep}_{L^1} \bigg[ \iint_{\R^d \times \R^d} \abss{ v+\sum_{h=1}^N \nabla K_{ih} \rho_h\ep} f\i\ep \,dx\,dv \\ & \hspace{4cm} + \sum_{h=1}^N \iint_{\R^d \times \R^d} \abss{ \nabla K_{ih} \ast \rho_h\ep} f\i\ep \,dx \,dv \bigg] \\
\leq & \sum_{j=1}^N \norm{ \Delta K\ij} _\ninf \norm{\rho\j\ep}_{L^1} \bigg[ I\i(t) + \sum_{h=1}^N \norm{\nabla K_{ih} } _\ninf \norm{ \rho_h\ep}_{L^1} \norm{ \rho\i\ep}_{L^1} \bigg].
\end{align*}
Thus, we derive that for each $i$ there exist two positive constants $B\i^1$ and $B\i^2$ depending on $\Xi$ and all $\norm{ f_{i0}}_{L^1}$ such that
\[ \abs{I\i^2(t)} \leq B\i^1 I\i(t) + B\i^2. \]
Hence, considering the estimates above, we obtain that
\begin{equation}
\label{eq:mainproof1}
\frac{d}{dt} I\i (t) \leq -\frac{1}{\varepsilon} I\i(t)+C\i^1 I\i(t)+C\i^2, 
\end{equation} 
where $C\i\k = A\i\k+B\i\k$ for $\fori$ and $k=1,2$. Furthermore, at time $t=0$ we get
\begin{equation}
\label{eq:mainproof2}
I\i(0) \leq \norm{ \abs{v} f_{i0}}_{L^1} + D\i, 
\end{equation}
as $\fori$, where the positive constants $D\i$ depend on $\Xi$ and all $\norm{ f_{i0}}_{L^1}$.
Combining \eqref{eq:mainproof1} and \eqref{eq:mainproof2} and using Gr\"onwall's lemma, we obtain that
\[
\sup _{t \in [0,T]} I\i(t) \leq C\i M\i (\varepsilon),
\] 
where the constants $C\i$ depend on $\Xi$ and all $\norm{(1+\abs{v})f_{i0}}_{L^1}$. Finally, is it enough to note that $M\prt{\varepsilon} \coloneqq \max\i \{ M\i \prt{\varepsilon}  \}$ decays to $0$ as $\varepsilon
\to 0$.
\end{proof}

\subsubsection{Estimate for measure solutions} \label{subsect:estimate_measure_valued}
In this section, our aim is to find an estimate as in \eqref{eq:mainsmoothrho} for a measure solution $\f$ to system \eqref{eq:system_uniform}. In order to proceed, we introduce the mollifier 
\[ \gamma^{(n)}(x,v)=n^{2d}\gamma^{(1)}(nx,nv)\in \mc_c^\infty(\R^{2d}), \]
where
\begin{gather*}
\text{supp}(\gamma^{(1)})\subset \overline{B(0,1)}\subset \R^{2d}, \qquad \gamma^{(1)}\geq 0, \qquad \iint_{\R^{2d}} \gamma^{(1)} (x,v)\,dx\,dv = 1, \\ \iint_{\R^d\times\R^d} \abs{v}\gamma^{(1)} (x,v)\,dx\,dv\leq 1.
\end{gather*}

Now, let $\f_0\in \mathcal{P}_c(\R^{2d})^N$ and $\varepsilon >0$ fixed. Define
\begin{equation}
\label{eq:moll_rho}
\f_0^{(n)}=\f_0\ast\gamma^{(n)},
\end{equation}
i.e., 
\[ f_{i0}^{(n)}=f_{i0}\ast\gamma^{(n)}\in \mc^2(\R^{2d}), \]
 for all $\fori$. 
The following is a classical result concerning the mollifier $\gamma$, see \cite{ambrosio03}.

\begin{lemma} \label{lemma:moll_properties} Let $\f\in\mathcal{P}_1(\R^{2d})^N$ be with $\emph{supp}(\f)\subset B(R_0)\subset \R^{2d}.$ Then
\begin{itemize}
\item[(i)] $\emph{supp}\prt{\f^{(n)}}\subset B(R_0+1)$ for all
$n\geq 1$.
\item[(ii)] $\f^{(n)}\in \mathcal{P}_1(\R^{2d})^N$ and 
\[
\iint_{\Rdd} \abs{v}f\i^{(n)}(x,v)\,dx\,dv
\]
are uniformly bounded, for all $\fori$.
\item[(iii)] $\{ \f^{(n)}\}_{n\geq 1}$ is a Cauchy sequence in $\mathcal{P}_1(\R^{2d})^N$ equipped with the Wasserstein distance $\mathcal{W}_1$ and $\norm{\f^{(n)}-\f}_{\mathcal{W}_1}\to 0$ as $n\to +\infty$.
\end{itemize}
\end{lemma}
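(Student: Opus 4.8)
The plan is to dispatch the three items one after another, since each follows from standard properties of convolution with a compactly supported mollifier together with the scaling identity $\gamma^{(n)}(x,v)=n^{2d}\gamma^{(1)}(nx,nv)$ and the normalisation $\iint_{\R^{2d}}\abs{v}\gamma^{(1)}(x,v)\,dx\,dv\le 1$. For (i), I would first note that the scaling forces $\text{supp}(\gamma^{(n)})\subseteq\overline{B(0,1/n)}$, and then invoke the fact that the support of a convolution is contained in the closure of the Minkowski sum of the supports of the two factors; hence $\text{supp}(f_i^{(n)})\subseteq\text{supp}(f_i)+\overline{B(0,1/n)}\subseteq B_{R_0+1/n}\subseteq B_{R_0+1}$ for every $n\ge1$ and every $i$.

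For (ii), the fact that $\f^{(n)}\in\mathcal{P}(\R^{2d})^N$ is immediate: convolving the probability measure $f_i$ with the probability density $\gamma^{(n)}$ produces a nonnegative function of unit integral by Tonelli's theorem. For the uniform first-moment bound I would write $f_i^{(n)}(x,v)=\int_{\R^{2d}}\gamma^{(n)}((x,v)-(y,w))\,df_i(y,w)$, interchange the integrations by Tonelli, translate the inner variable by $(y,w)$, and use the triangle inequality $\abs{v+w}\le\abs{v}+\abs{w}$ together with $\iint\gamma^{(n)}=1$ to arrive at
\[
\iint_{\R^{2d}}\abs{v}\,f_i^{(n)}(x,v)\,dx\,dv\;\le\;\iint_{\R^{2d}}\abs{v}\,\gamma^{(n)}(x,v)\,dx\,dv\;+\;\int_{\R^{2d}}\abs{w}\,df_i(y,w).
\]
Rescaling the first term turns it into $\tfrac1n\iint_{\R^{2d}}\abs{\eta}\,\gamma^{(1)}(\xi,\eta)\,d\xi\,d\eta\le\tfrac1n\le1$, while the second is at most $R_0$ since $\text{supp}(f_i)\subseteq B_{R_0}$; so $\iint_{\R^{2d}}\abs{v}\,f_i^{(n)}\,dx\,dv\le R_0+1$ uniformly in $n$, and in particular $\f^{(n)}\in\mathcal{P}_1(\R^{2d})^N$.

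For (iii), I would produce the canonical coupling between $f_i^{(n)}$ and $f_i$: take the push-forward $\pi_i^{(n)}$ of the product $f_i\otimes\gamma^{(n)}$ under the map $((y,w),(\xi,\eta))\mapsto((y+\xi,w+\eta),(y,w))$, whose two marginals are exactly $f_i^{(n)}$ and $f_i$. Plugging this admissible plan into the definition of $W_1$ and using $\text{supp}(\gamma^{(n)})\subseteq\overline{B(0,1/n)}$ yields $W_1(f_i^{(n)},f_i)\le\iint_{\R^{2d}}\abs{(\xi,\eta)}\,\gamma^{(n)}(\xi,\eta)\,d\xi\,d\eta\le\tfrac1n$. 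Since $\f^{(n)}$ and $\f$ carry no time dependence, $\mathcal{W}_1$ here is just the sum of the component $W_1$-distances, so $\mathcal{W}_1(\f^{(n)},\f)\le N/n\to0$ as $n\to\infty$, and the Cauchy property then follows from the triangle inequality. (Alternatively, one could combine the standard weak convergence $f_i^{(n)}\rightharpoonup f_i$ with the convergence of first moments, which the formula above makes transparent, and appeal to Proposition \ref{prop:convergence_properties}.)

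None of these steps presents a genuine difficulty; the only places calling for a little care are the measure-theoretic bookkeeping — that the convolution of the possibly singular measure $f_i$ with the smooth density $\gamma^{(n)}$ is again an honest probability density and that Tonelli's theorem applies throughout — and keeping the Minkowski-sum support inclusion compatible with the closed-ball convention used in the paper. The main obstacle, such as it is, is therefore purely notational rather than substantive.
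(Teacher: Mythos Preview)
Your proof is correct and each step is carried out carefully; the coupling argument in (iii) is the standard and cleanest route to the quantitative $W_1$-bound. Note that the paper does not actually prove this lemma at all: it simply labels it a classical result and points to \cite{ambrosio03}, so your argument supplies strictly more than the paper does while following exactly the expected line of reasoning.
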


Consider that the approximating sequence $\f\epn$ satisfies the system
\begin{equation}
\label{eq:system_moll}
\partial _t f\i\epn+v\cdot \nabla _x f\i\epn=\frac{1}{\varepsilon}\nabla_v\cdot \bigg( \bigg( v+ \sum_{j=1}^N \nabla K\ij \ast \rho\i\epn \bigg) f\i\epn \bigg),
\end{equation}
for $\fori$, equipped with initial data
\[
f\i\epn\mid_{t=0}=f^{(n)}_{i0}(x,v), 
\]
with 
\[ \rho\i\epn=\int_{\R^d} f\i\epn\,dv. \]
 The results in Theorem \ref{th:smoothexistence}, Proposition \ref{prop:support} and Proposition \ref{prop:stability}, together with Lemma \ref{lemma:moll_properties},  ensure the following approximation result. 
\begin{lemma} \label{lemma:convergence}
Assume all the potentials under assumption \eqref{pot}. Let $\f_0\in\mathcal{P}_c(\R^{2d})^N$ and $\f_0^{(n)}$ as in \eqref{eq:moll_rho}. Then for each $T>0$ there exists a  solution $\f\epn\in \mc([0,T); \mc^1(\R^{2d})^N)$ to \eqref{eq:system_moll} whose support depends only on $T$ and $\Xi$ and is uniformly bounded both in $\varepsilon$ and $n$. Furthermore, if $\f\ep\in \mc \prt{ [0,T),\mathcal{P}_c\prt{\R^{2d}}^N}$ is the unique measure solution to \eqref{eq:system_uniform} as provided in Theorem \ref{th:existence}, then
\[
\f\epn \prt{t,\cdot,\cdot} \xrightarrow{\mathcal{W}_1}\f\ep \prt{t,\cdot,\cdot} \quad \text{in $\mathcal{P}_1(\R^{2d})^N$,}
\]
uniformly in $t$ as $n\to\infty$.
\end{lemma}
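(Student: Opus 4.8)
The plan is to assemble the statement from results already established in this section, with essentially no new computation; the only care needed is to track that every constant is uniform in both $\varepsilon$ and $n$. First I would observe that, by Lemma \ref{lemma:moll_properties}(i)--(ii), each mollified datum $\f_0^{(n)}=\f_0\ast\gamma^{(n)}$ lies in $\mc^1(\R^{2d})^N\cap L^1(\R^{2d})^N$ (indeed in $\mc^2$), has compact support contained in $B(R_0+1)$ where $B(R_0)\supset\mathrm{supp}(\f_0)$, and has finite first moment in $v$ bounded uniformly in $n$. Applying Theorem \ref{th:smoothexistence} with initial datum $\f_0^{(n)}$ then yields a solution $\f\epn\in\mc([0,T];\mc^1(\R^{2d})^N)$ of \eqref{eq:system_moll}. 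Since mass is conserved along \eqref{eq:system_moll} (integrate in $v$, cf.\ \eqref{eq:cons_mass}) and $\f\epn$ remains nonnegative with compact support, each $f\i\epn(t,\cdot,\cdot)$ is a compactly supported probability density; hence $\f\epn$ is also a measure solution of the kinetic system with datum $\f_0^{(n)}$ in the sense of Definition \ref{def:solution}, and by the uniqueness part of Theorem \ref{th:existence} it is \emph{the} measure solution with that datum. This identification is precisely what lets me invoke the stability estimate later. For the uniform support bound I would then apply Proposition \ref{prop:support}: the support of $\f\epn(t)$ is transported by the characteristic flow \eqref{eq:charactgeneral} and therefore stays inside a ball $B_{R(T)}$ whose radius depends only on $T$, on the support of the initial datum, and on $\Xi$; since $\mathrm{supp}(\f_0^{(n)})\subset B(R_0+1)$ for every $n$ and the $\varepsilon$-independence is already built into Proposition \ref{prop:support}, the radius $R(T)$ can be chosen independent of both $n$ and $\varepsilon$.

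For the convergence, I note that \eqref{eq:system_moll} and \eqref{eq:system_uniform} are literally the same equation for the fixed $\varepsilon$, so both $\f\epn$ (with datum $\f_0^{(n)}$) and $\f\ep$ (with datum $\f_0$) are solutions of the kinetic system to which Proposition \ref{prop:stability} applies. That proposition gives
\[
\mathcal{W}_1\big(\f\epn(t),\f\ep(t)\big)\le r(t)\,\mathcal{W}_1\big(\f_0^{(n)},\f_0\big),\qquad t\in[0,T],
\]
with $r$ increasing and depending only on the supports of $\f_0^{(n)}$ and $\f_0$; as those are contained in $B(R_0+1)$ and $B(R_0)$ uniformly in $n$, one has $r(t)\le r(T)$ on $[0,T]$ with $r(T)$ independent of $n$. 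Combining this with $\mathcal{W}_1(\f_0^{(n)},\f_0)\to 0$ from Lemma \ref{lemma:moll_properties}(iii) gives $\sup_{t\in[0,T]}\mathcal{W}_1(\f\epn(t),\f\ep(t))\to 0$, which is the asserted convergence uniform in $t$.

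The argument is essentially bookkeeping, so there is no serious obstacle; if anything, the delicate point is the \emph{simultaneous} uniformity in $\varepsilon$ and $n$, which has to be checked at each invocation — the $\varepsilon$-uniformity coming from Proposition \ref{prop:support}, and the $n$-uniformity ultimately from the bound $\mathrm{supp}(\f_0^{(n)})\subset B(R_0+1)$ of Lemma \ref{lemma:moll_properties}(i) together with the uniform first-moment bound of Lemma \ref{lemma:moll_properties}(ii). No genuinely new estimate is required beyond Theorems \ref{th:smoothexistence} and \ref{th:existence} and Propositions \ref{prop:support} and \ref{prop:stability}.
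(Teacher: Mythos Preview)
Your proposal is correct and follows exactly the route the paper indicates: the paper simply states that the lemma follows from Theorem \ref{th:smoothexistence}, Proposition \ref{prop:support}, Proposition \ref{prop:stability}, and Lemma \ref{lemma:moll_properties}, and you have filled in precisely those details, including the identification of the smooth solution with the unique measure solution via Theorem \ref{th:existence} so that the stability estimate applies. Your care in tracking that the support bound and the growth function $r(t)$ are uniform in both $n$ and $\varepsilon$ is appropriate and matches what the paper needs downstream.
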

% \begin{proof}
% Since $\f_0^{(n)}\in \mc_c^2(\R^{2d})^N$, we can apply Theorem \ref{th:smoothexistence} and we find that there exists a smooth solution $\f\epn\in \mc([0,t),\mc^1(\R^{2d})^N)$ to \eqref{eq:system_uniform} for every $\varepsilon >0$ and every $n\geq 1$, with compact support. By Proposition \ref{prop:support}, we have that $\text{supp}(\f\epn)$ is independent of $\varepsilon$ and depends on $T$, $\Xi$ and the support of $\f_0^{(n)}$. Since by Lemma \ref{lemma:moll_properties} $\text{supp}(\f_0^{(n)})$ is contained in a ball for all $n\geq 1$, we deduce that $\text{supp}\prt{\f\epn}$ is uniformly bounded both in $\varepsilon$ and in $n$ for all $t\in [0,T].$
% Now, let $\f\ep\in \mc([0,t),\mathcal{P}_1(\R^{2d})^N)$ be the unique solution to \eqref{eq:system_uniform} as in Theorem \ref{th:existence}. By Proposition \ref{prop:stability}, for all $t\geq 0$,
% \[ \norm{\f\epn-\f\ep}_{\mathcal{W}_1}\leq r(T)\norm{\f_0^{(n)}-\f_0}_{\mathcal{W}_1}. \]
% By Lemma \ref{lemma:moll_properties}, we have the assertion.
% \end{proof}

From this result it follows that
\begin{equation}
\label{eq:convergence_measure}
\ro\epn(t,\cdot)\to \ro\ep(t,\cdot) \quad \text{weakly as measures} 
\end{equation}
for each $t\in [0,T)$ as $n\to\infty$, where $\ro = \prt{	\rho\i}_{i=1}^N$.

\begin{lemma} \label{lemma:convolution_convergence}
 Let $\f\ep$ be the solution to \eqref{eq:system_uniform} obtained as the limit of approximating sequences $\f\epn$ as in Lemma \ref{lemma:convergence}. Then, for all $t\geq 0$, 
$ \nabla K\ij\ast\rho\j\ep$ are continuous functions in $\R^d$ for all $i,j=1\ldots,N$
and
\[
\nabla K\ij\ast\rho\j\epn(t,\cdot) \to\nabla K\ij\ast\rho\j\ep(t,\cdot) 
\]
strongly in $L^\infty_{loc}(\R^d)$, as $n\to\infty.$
\end{lemma}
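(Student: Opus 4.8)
The plan is to reduce the statement to the regularity of the kernels under \eqref{pot} and the Kantorovich--Rubinstein duality, combined with the $\mathcal{W}_1$-convergence supplied by Lemma \ref{lemma:convergence}. First recall the ingredients: by \eqref{pot} each $\nabla K\ij$ is bounded and globally Lipschitz on $\R^d$, with Lipschitz constant $L\ij\coloneqq\norm{\nabla K\ij}_{W^{1,\infty}}$; by Proposition \ref{prop:support} and Lemma \ref{lemma:convergence}, for every $T>0$ and $t\in[0,T]$ the measures $\rho_j^\e(t,\cdot)$ and $\rho_j^{\e,(n)}(t,\cdot)$ are probability measures supported in a fixed ball $B_{R(T)}\subset\R^d$, uniformly in $n$ and $\e$; and $\f\epn(t,\cdot,\cdot)$ converges to $\f\ep(t,\cdot,\cdot)$ in $\mathcal{W}_1$, uniformly in $t$, as $n\to\infty$.

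\emph{Continuity of $\nabla K\ij\ast\rho_j^\e$.} Fix $t\ge 0$ and write $\nabla K\ij\ast\rho_j^\e(t,x)=\int_{\R^d}\nabla K\ij(x-y)\,d\rho_j^\e(t,y)$. For each $y$ the integrand is continuous in $x$ since $K\ij\in\mc^2(\R^d)$, and it is dominated by $\norm{\nabla K\ij}_{L^\infty}$, which is integrable against the probability measure $\rho_j^\e(t,\cdot)$. Dominated convergence then shows that $x\mapsto\nabla K\ij\ast\rho_j^\e(t,x)$ is continuous on $\R^d$; the same applies to the (smooth) densities $\rho_j^{\e,(n)}$.

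\emph{Convergence in $L^\infty_{loc}$.} Since the projection $\pi(x,v)=x$ is $1$-Lipschitz, the image under $\pi\times\pi$ of any transport plan between $f_j^{\e,(n)}(t,\cdot,\cdot)$ and $f_j^\e(t,\cdot,\cdot)$ is an admissible plan between their $x$-marginals, so $W_1(\rho_j^{\e,(n)}(t),\rho_j^\e(t))\le W_1(f_j^{\e,(n)}(t),f_j^\e(t))\to 0$ as $n\to\infty$, uniformly in $t$, by Lemma \ref{lemma:convergence}. For any fixed $x\in\R^d$ the map $y\mapsto\nabla K\ij(x-y)$ is Lipschitz in $y$ with constant $L\ij$ independent of $x$, hence the Kantorovich--Rubinstein duality, applied component by component, yields
\begin{align*}
\left|\nabla K\ij\ast\rho_j^{\e,(n)}(t,x)-\nabla K\ij\ast\rho_j^\e(t,x)\right| &=\left|\int_{\R^d}\nabla K\ij(x-y)\,d\big(\rho_j^{\e,(n)}-\rho_j^\e\big)(t,y)\right| \\
&\le C_d\,L\ij\,W_1\big(\rho_j^{\e,(n)}(t),\rho_j^\e(t)\big),
\end{align*}
where $C_d$ depends only on $d$ (coming from the splitting of the vector field into its components). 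As the right-hand side is independent of $x$, we conclude that $\sup_{x\in\R^d}\left|\nabla K\ij\ast\rho_j^{\e,(n)}(t,x)-\nabla K\ij\ast\rho_j^\e(t,x)\right|\to 0$ as $n\to\infty$, which is in fact stronger than the claimed $L^\infty_{loc}(\R^d)$-convergence.

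\emph{On the main point.} There is no real obstruction here; the one step meriting attention is the passage from $\mathcal{W}_1$-convergence of the full kinetic densities $\f\epn$ to $W_1$-convergence of the spatial marginals $\rho_j^{\e,(n)}$, handled by the $1$-Lipschitz projection above. Alternatively, one may bypass the quantitative estimate entirely: pointwise convergence of $\nabla K\ij\ast\rho_j^{\e,(n)}(t,\cdot)$ follows from the weak convergence \eqref{eq:convergence_measure} together with the uniform compact support (testing against $\nabla K\ij(x-\cdot)$ multiplied by a cut-off equal to $1$ on $B_{R(T)}$), while the uniform bound $\mathrm{Lip}\big(\nabla K\ij\ast\rho_j^{\e,(n)}(t,\cdot)\big)\le L\ij$ makes this family equicontinuous, so that Arzel\`a--Ascoli upgrades the pointwise limit to a locally uniform one.
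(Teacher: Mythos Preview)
Your proof is correct, but it differs from the paper's argument. The paper proceeds by compactness: it records the uniform bound $\abs{\nabla K\ij\ast\rho\j\epn}\le\norm{\nabla K\ij}_{L^\infty}$ and the uniform Lipschitz estimate for $x\mapsto\nabla K\ij\ast\rho\j\epn(x)$, applies Arzel\`a--Ascoli to extract a subsequence converging in $L^\infty_{loc}$, and then identifies the limit as $\nabla K\ij\ast\rho\j\ep$ using the weak convergence \eqref{eq:convergence_measure}; uniqueness of the limit upgrades this to convergence of the full sequence. You instead exploit the $\mathcal{W}_1$-convergence from Lemma~\ref{lemma:convergence} directly: after passing to the $x$-marginals via the $1$-Lipschitz projection, Kantorovich--Rubinstein duality (applied componentwise to the globally Lipschitz test function $y\mapsto\nabla K\ij(x-y)$) yields a bound on the difference that is uniform in $x$, so you obtain convergence of the full sequence in $L^\infty(\R^d)$ with an explicit rate. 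Your route is more elementary and gives a sharper conclusion; the paper's route is the more classical compactness argument, which you in fact sketch yourself in your final paragraph.
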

\begin{proof}
Given the regularity of $\f\epn$, i.e., $\f\epn\in \mc([0,T); \mc^1(\R^{2d})^2)$ and the assumption \eqref{pot}, we get the continuity of the convolutions. Moreover, we can easily estimate
\[
\abs{ \nabla K\ij\ast\rho\j\epn } \leq \norm{\nabla K\ij}_\ninf, 
\]
and for all $x_1,x_2\in\R^d$, we have
\[
\abs{\nabla K\ij\ast\rho\j\epn(x_1)-\nabla K\ij\ast\rho\j\epn(x_2)}\leq \norm{\nabla K\ij}_\ninf \abs{x_1-x_2}, 
\]
for $i,j=1,\ldots,N$. Thus the sequences 
\[ 
\{\nabla K\ij\ast\rho\j\epn\}_{n\geq 1}
\]
are equicontinuous and uniformly bounded. Hence, by Ascoli-Arzelà theorem, they strongly converge on a subsequence on compact sets in $\R^d$. Furthermore, by \eqref{eq:convergence_measure} we have that the limit functions are 
\[
\nabla K\ij\ast\rho\j\ep, 
\]
respectively. These limit functions are also continuous on $\R^d$ by inequalities above (using $\rho\j\ep$ in place of $\rho\j\epn$). Then it follows the assertion.
\end{proof}

%Now, we can write an uniform estimate for measure solutions. Ti idea is to apply Proposition \ref{prop:estimate_smooth} to approximating sequence $(f_\varepsilon^{(n)}, g_\varepsilon^{(n)})$ when $\varepsilon$ is fixed. 

Since the approximating sequence $\f\epn$ is smooth, we can apply to it Proposition \ref{prop:estimate_smooth} with $\varepsilon$ fixed. In particular, with $n\geq 1$ fixed, we can say that there exist $N$ positive constants $C\i$ depending on $\Xi$ and all $\norm{(1+\abs{v}f_{i0}^{(n)})}_{L^1}$ and a function $M\prt{\varepsilon}$ depending on $\varepsilon$ such that for $\varepsilon<\varepsilon_0$
\[
\abss{\iint_{\R^{2d}} \bigg( v+ \sum_{j=1}^N \nabla K\ij\ast\rho\j\epn \bigg) f\i\epn \,dx\,dv}\leq C\i M(\varepsilon).
\]
By part $(ii)$ in Lemma \ref{lemma:moll_properties}, we have that $\norm{(1+\abs{v}f_{i0}^{(n)})}_{L^1}$ are uniform bound in $n$ for all $\fori$, thus the function $M(\varepsilon)$ and the constants $C\i$ can be chosen independent on $n$. Therefore the estimates
\begin{equation}
\label{eq:estimate_moll_rho}
\abss{\iint_{\R^{2d}} \bigg( v+\sum_{j=1}^N \nabla K\ij \ast \rho\j\epn \bigg) f\i\epn \,dx\,dv}\leq C\i M(\varepsilon)
\end{equation}
hold for all $n\geq 1$ and $t\in [0,T]$, as $\fori$.

\begin{prop} \label{prop:estimate_moll} Assume $\varepsilon >0$ fixed such that \eqref{eq:estimate_moll_rho} holds and assume that assumptions in Lemma \ref{lemma:convergence} are satisfied. Then for any $\prt{\phi\i}_{i=1}^N \in \mc_b(\R^{2d})^N$ there exist $N$ constants $\overline{C}\i$ such that
\[
\abss{\iint_{\R^{2d}}\phi\i(x,v) \bigg( v+\sum_{j=1}^N \nabla K\ij \ast \rho\j\ep (x) \bigg) f\i\ep(x,v)\,dx\,dv}\leq \overline{C\i} M(\varepsilon)
\]
hold for all $t\in [0,T]$, as $\fori$.
In particular, the constants $\overline{C}\i$ are independent of $\varepsilon$ and $t$, and $\overline{C}\i=\norm{\phi\i}_\ninf C\i$, where $C\i$ are constants depending on all $\iint (1+\abs{v})f_{i0}\,dx\,dv$ and $\Xi$.
\end{prop}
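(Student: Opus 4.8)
The plan is to transfer the bound \eqref{eq:estimate_moll_rho}, which holds for the smooth approximations $\f\epn$, to the measure solution $\f\ep$ by letting $n\to\infty$, exploiting the convergences in Lemmas \ref{lemma:convergence} and \ref{lemma:convolution_convergence}.

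\emph{Bound at the approximate level.} Since $\f\epn$ solves the smooth system \eqref{eq:system_moll}, Proposition \ref{prop:estimate_smooth} applies and gives
\[
\iint_{\R^{2d}}\abss{v+\sum_{j=1}^N\nabla K\ij\ast\rho\j\epn}f\i\epn\,dx\,dv\le C\i M(\varepsilon),\qquad t\in[0,T];
\]
by part $(ii)$ of Lemma \ref{lemma:moll_properties}, $\norm{(1+\abs{v})f_{i0}^{(n)}}_{L^1}$ is bounded uniformly in $n$, so the $C\i$ and $M(\varepsilon)$ may be taken independent of $n$, depending only on $\Xi$ and $\iint_{\R^{2d}}(1+\abs{v})f_{i0}\,dx\,dv$. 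Consequently, for any $\prt{\phi\i}_{i=1}^N\in\mc_b(\R^{2d})^N$ and any $n\ge1$,
\[
\abss{\iint_{\R^{2d}}\phi\i(x,v)\bigg(v+\sum_{j=1}^N\nabla K\ij\ast\rho\j\epn(x)\bigg)f\i\epn(x,v)\,dx\,dv}\le\norm{\phi\i}_\ninf C\i M(\varepsilon)\eqqcolon\overline{C}\i M(\varepsilon),
\]
uniformly in $t\in[0,T]$, with $\overline{C}\i$ independent of $\varepsilon$, $t$ and $n$.

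\emph{Passage to the limit.} By Proposition \ref{prop:support} and Lemma \ref{lemma:convergence}, all the measures $\f\epn(t)$ and $\f\ep(t)$ are supported in a common ball $B_{R(T)}\subset\R^{2d}$ independent of $n$, and $\f\epn(t,\cdot,\cdot)\xrightarrow{\mathcal{W}_1}\f\ep(t,\cdot,\cdot)$ --- hence also narrowly --- for each $t$. I would then split the difference of the $n$-level and $\varepsilon$-level integrals as
\begin{align*}
&\iint_{\R^{2d}}\phi\i\bigg(v+\sum_{j=1}^N\nabla K\ij\ast\rho\j\epn\bigg)f\i\epn\,dx\,dv-\iint_{\R^{2d}}\phi\i\bigg(v+\sum_{j=1}^N\nabla K\ij\ast\rho\j\ep\bigg)f\i\ep\,dx\,dv\\
&\qquad=\iint_{\R^{2d}}\phi\i\sum_{j=1}^N\big(\nabla K\ij\ast\rho\j\epn-\nabla K\ij\ast\rho\j\ep\big)f\i\epn\,dx\,dv\\
&\qquad\quad+\iint_{\R^{2d}}\phi\i\bigg(v+\sum_{j=1}^N\nabla K\ij\ast\rho\j\ep\bigg)\big(f\i\epn-f\i\ep\big)\,dx\,dv.
\end{align*}
The first term is bounded by $\norm{\phi\i}_\ninf\sum_{j=1}^N\norm{\nabla K\ij\ast\rho\j\epn-\nabla K\ij\ast\rho\j\ep}_{\ninf(B_{R(T)}^1)}$, which tends to $0$ by Lemma \ref{lemma:convolution_convergence} since $f\i\epn$ is a probability measure supported in $B_{R(T)}$. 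In the second term, the integrand restricted to $B_{R(T)}$ is componentwise continuous and bounded --- $\phi\i\in\mc_b$, $v$ is continuous, and $\nabla K\ij\ast\rho\j\ep$ is continuous by Lemma \ref{lemma:convolution_convergence} and bounded by $\norm{\nabla K\ij}_\ninf$ by \eqref{pot} --- so, after multiplying by a cutoff equal to $1$ on $B_{R(T)}$ to view it as an element of $\mc_b(\R^{2d})$, this term also tends to $0$ by the narrow convergence $f\i\epn\to f\i\ep$. Hence the $n$-level integrals converge to $\iint_{\R^{2d}}\phi\i(v+\sum_j\nabla K\ij\ast\rho\j\ep)f\i\ep\,dx\,dv$, and since each of them has modulus $\le\overline{C}\i M(\varepsilon)$, so does the limit, which is the desired inequality.

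The step I expect to be the main obstacle is the limit passage above: one must pass to the limit in a pairing of a narrowly convergent sequence of measures against test functions that themselves depend on $n$ through the nonlocal forces $\nabla K\ij\ast\rho\j\epn$. The decomposition into a ``force-error'' term --- controlled by the uniform-in-$n$ confinement of the supports (Proposition \ref{prop:support}, Lemma \ref{lemma:convergence}) together with the locally uniform convergence of the convolutions (Lemma \ref{lemma:convolution_convergence}) --- and a ``measure'' term --- controlled by narrow convergence against the fixed bounded continuous function $\phi\i\,(v+\sum_j\nabla K\ij\ast\rho\j\ep)$ --- is what makes this work.
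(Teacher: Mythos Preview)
Your proof is correct and follows essentially the same route as the paper's: establish the bound at the approximate level via Proposition \ref{prop:estimate_smooth} with constants independent of $n$, then pass to the limit $n\to\infty$ using the common compact support, Lemma \ref{lemma:convolution_convergence}, and the narrow convergence $f\i\epn\to f\i\ep$. Your explicit splitting into a ``force-error'' term and a ``measure'' term is simply a more detailed justification of the limit passage that the paper compresses into a one-line appeal to Lemma \ref{lemma:convolution_convergence} and Proposition \ref{prop:convergence_properties}; the cutoff trick you use to make $\phi\i(v+\sum_j\nabla K\ij\ast\rho\j\ep)$ a genuine $\mc_b$ test function plays the same role as the paper's restriction of the integral to the common support $\Omega(T)$.
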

\begin{proof}
Multiplying \eqref{eq:estimate_moll_rho} by $\phi\i$ we have
\begin{equation}
\label{eq:measure_valued_rho}
\abss{\iint_{\R^{2d}} \phi(x,v) \bigg( v+\sum_{j=1}^N \nabla K\ij \ast \rho\j\epn \bigg) f\i\epn \,dx\,dv}\leq C\i\norm{\phi\i}_\ninf M(\varepsilon), 
\end{equation}
where $C\i$ are constants depending on $\Xi$ and the first moment of $f_{i0}$ in $v$. Let $\Omega (T)$ be the common support of $\f\epn (t)$ for all $\varepsilon >0,$ $n\geq 1$ and $t\in [0,T].$ Then, by Lemma \ref{lemma:convolution_convergence} and Proposition \ref{prop:convergence_properties}, we obtain that for each $t\in [0,T],$
\begin{align*}
\iint_{\R^{2d}} & \phi \i (x,v) \bigg( v+\sum_{j=1}^N \nabla K\ij \ast \rho\j\epn \bigg) f\i\epn \,dx dv 
\\ = & \iint_{\Omega (T)} \phi\i (x,v) \bigg( v+\sum_{j=1}^N \nabla K\ij \ast \rho\j\epn \bigg) f\i\epn \,dx dv 
\end{align*}
converges to 
\[
\iint _{\R^{2d}} \phi\i(x,v) \bigg( v+\sum_{j=1}^N \nabla K\ij \ast \rho\j\ep \bigg) f\i\ep \,dx dv
\]
as $n\to\infty$, for all $\fori$. Therefore, considering the limit as $n\to\infty$ in \eqref{eq:measure_valued_rho}, we find the assertion.
\end{proof}

\subsection{Small inertia limit}
\label{sec:kinetic_smooth_limit}
This subsection is finally  devoted to the proof of Theorem \ref{th:kinetic_limit}. More precisely, we consider $\f\ep$ solution to \eqref{eq:kinetic}, satisfying the uniform bounds as stated in Proposition \ref{prop:estimate_moll} and we show that the marginals
\[
\rho\i\ep(t,x)=\int_{\R^d} f\i\ep(t,x,v)\,dv
\]
converge to a solution $\ro = (\rho\i)_{i=1}^N$ to the first order system 
\begin{equation} 
\label{eq:system_compact}
\partial_t\rho\i -\nabla\cdot \bigg( \bigg( \sum_{j=1}^N \nabla K\ij \ast \rho\j \bigg) \rho\i \bigg) =0,
\end{equation}
for $\fori$, equipped with initial data
\[
\rho\i (t,x) \mid_{t=0}=\rho_{i0}(x).
\]
% Next we define the weak solutions to \eqref{eq:system_compact}.
% \valeria{I moved the definition below to section 2 anche also the statemnt of the theorem on small inertia limit}
% \begin{definition}
% \label{def:first_order_ws}
% A weak solution to \eqref{eq:system_compact} is a $N$-tuple $\ro = (\rho\i)_{i=1}^N \in \mc([0,T),\mathcal{P}(\R^d)^N)$ that satisfies 
% \begin{equation}
% \label{eq:weak_solution}
% \int_0^T \int_{\R^d} \partial_t \phi\i \rho\i \,dx\,dt -\int_0^T \int_{\R^{d}} \nabla_x \phi\i \cdot \bigg( \sum_{j=1}^N K\ij \ast \rho\j \bigg) \rho\i \,dx\,dt +\int_{\R^d} \phi\i(0) \rho_{i0}\,dx=0,
% \end{equation}
% for each $\phi\i\in \mc_c^1([0,T);\mc_b^1(\R^{d}))$, as $\fori$. 
% \end{definition}

% \begin{theorem}[Small inertia limit]
% \label{th:kinetic_limit} 
% Let $T>0.$ Assume all the potentials as in \eqref{pot}. Consider $\f_0 \in\mathcal{P}_c(\R^{2d})^N$. Let $\f\ep \in \mc([0,T);\mathcal{P}_c(\R^{2d})^N)$ be the solution to system \eqref{eq:kinetic} given by Theorem \ref{th:existence}. Let $\rho\i\ep$ be given by \eqref{eq:marginals_ep}, for $\fori$. Then there exists $\ro\in \mc([0,T);\mathcal{P}_1(\R^d)^N)$ such that for each $t\in [0,T)$,
% \[
% \ro\ep(t,\cdot)\xrightarrow{\mathcal{W}_1}\ro(t,\cdot) \qquad \text{in $\mathcal{P}_1(\R^d)^N$}
% \]
% as $\varepsilon\to 0$. Moreover, $\ro$ is a weak solution to system \eqref{eq:system_compact} in the sense of Definition \ref{def:first_order_ws}.
% \end{theorem}

\begin{proof}[Proof of Theorem \ref{th:kinetic_limit}]
We start noting that, for each $\phi\i\in \mc_c^1([0,T);\mc_b^1(\R^{2d}))$, the measure solution $\f\ep$  satisfies  
\begin{equation}
\label{eq:weak_solution_fg}
\begin{aligned}
\int_0^T \iint_{\R^{2d}} \partial_t\phi\i f\i\ep\,dx\,d&v\,dt +\iint_{\R^{2d}} \phi\i(0) f_{i0}\,dx\,dv + \int_0^T \iint_{\R^{2d}} \nabla_x \phi\i \cdot vf\i\ep\,dx\,dv\,dt  \\ & -\frac{1}{\varepsilon}\int_0^T \iint_{\R^{2d}} \nabla_v \phi\i \cdot \bigg(v+\sum_{j=1}^N \nabla K\ij \ast \rho\j\ep\bigg)f\i\ep\,dx\,dv\,dt =0,
\end{aligned}
\end{equation}
for all $\fori$. Consider $\psi\i \in \mc_c^1(0,T)$, and $\chi\i \in \mc_b^1(\R^d)$, as $\fori$, and define 
\begin{equation}
\label{test_functions}
    \phi\i(t,x,v)=\psi\i(t)\chi\i(x).
\end{equation}
Using the test functions  defined in \eqref{test_functions} in system \eqref{eq:weak_solution_fg}, we have
\[
\int_0^T \psi\i'(t)\int_{\R^d} \chi\i(x)\rho\i\ep (t,x) \,dx\,dt=-\int_0^T\psi\i(t)\iint_{\R^{2d}} \nabla_x\chi\i(x)\cdot vf\i\ep \,dx\,dv\,dt.
\]
Set
\[ 
\xi\i(t)\coloneqq \int_{\R^d} \chi\i(x)\rho\i\ep (t,x)\,dx.
\]
Thus, it follows
\[ 
\int_0^T \psi\i'(t)\xi\i(t)\,dt=-\int_0^T \psi\i(t)\iint_{\R^{2d}}\nabla_x\chi\i(x)\cdot vf\i\ep\,dx\,dv\,dt, 
\]
for any $\psi\i \in \mc_c^1(0,T).$ Therefore, we deduce that the weak derivative of $\xi\i$ is
\[ 
\xi\i'(t)=\iint_{\R^{2d}} \nabla_x\chi\i\cdot vf\i\ep \,dx\,dv \in \ninf (0,T).
\]
Let $\Omega (T)$ be the common support of $\f\ep$ for every $\varepsilon >0$ and for all $t\in [0,T].$ By Theorem \ref{th:existence}, $\f\ep$ is uniformly supported on $\Omega (T)$, thus
\begin{equation} 
\label{eq:kinetic_th_bound_rho}
\norm{\xi\i}_{W^{1,\infty}(0,T)} \leq C\i (T)\norm{\chi\i}_{\mc_b^1(\R^d)},  
\end{equation}
where $C\i$ depend on $T$ and are independent of $\varepsilon.$
Since $\xi\i(t)$ are uniformly bounded in $W^{1,\infty}(0,T)$, as $\fori$, by Ascoli-Arzelà theorem there exist a subsequence $\varepsilon_k$ and a function $\mu\i(t)\in \mc([0,T))$ such that
\begin{equation}
\label{eq:kinetic_th_mu_rho} 
    \int_{\R^d} \chi\i(x)\rho\i^{\varepsilon_k} (x,t)\,dx \to \mu\i(t)
\end{equation}
uniformly on $[0,T)$ as $\varepsilon_k\to 0.$ Furthermore, Proposition \ref{prop:support} ensures that the support of $\f\ep$ is uniformly bounded in $\varepsilon$, then the sequence $\ro\ep(t,\cdot)$ is tight. By Prokhorov's Theorem, for each $t\in [0,T)$, $\ro\ep(t,\cdot)$ converges weakly-$^\ast$, up to a subsequence, to $\ro(t,\cdot)\in\mathcal{P}(\R^d)^N.$
By Proposition \ref{prop:convergence_properties}, we have that this implies convergence in $\mathcal{P}_1(\R^d)^N$ with respect to $\mathcal{W}_1$-distance. Hence, for each $t>0$, there exists a subsequence of $\ro^{\varepsilon_k}$ denoted by $\ro^{\varepsilon_{k_n}}$, where $k_n$ may depend on time, such that
\[ 
\ro^{\varepsilon_{k_n}}(t,\cdot)\xrightarrow{\mathcal{W}_1}\ro(t,\cdot) \quad \text{in $\mathcal{P}_1(\R^d)^N$} 
\]
as $\varepsilon_{k_n}\to 0$. It follows that for each $t\in [0,T)$ and all $\chi\i \in \mc_b^1(\R^d)$ we get
\begin{equation}
\label{eq:kinetic_th_weak_rho} 
\int_{\R^d} \rho\i^{\varepsilon_{k_n}}(t,x)\chi\i(x) \,dx \to \int_{\R^d} \rho\i(t,x)\chi\i(x)\,dx 
\end{equation}
as $\varepsilon_{k_n}\to 0.$
The limit $\mu\i(t)$ in \eqref{eq:kinetic_th_mu_rho} is unique at each $t\in[0,T).$ Combining this with \eqref{eq:kinetic_th_weak_rho}, we deduce that the sequence $\ro_{\varepsilon_k}(t,\cdot)$, with $\varepsilon_k$ independent of time, and $\ro(t,\cdot)\in\mathcal{P}_1(\R^d)^N$ satisfy
\begin{equation}
\label{eq:kinetic_th_lim_rho}
    \int_{\R^d} \chi\i(x) \rho\i^{\varepsilon_k}(t,x)\,dx \to \int_{\R^d} \chi\i(x)\rho\i(t,x)\,dx
\end{equation}
uniformly on $[0,T)$ as $\varepsilon_k\to 0$, for any $\chi\i\in \mc_b^1(\R^d)$. Moreover,
\begin{equation}
\label{eq:kinetic_th_vanish_meas}
\ro^{\varepsilon_k}(t,\cdot)\xrightarrow{\mathcal{W}_1} \ro(t,\cdot) \quad \text{in $\mathcal{P}_1(\R^d)^N$}
\end{equation}
as $\varepsilon_k\to 0$. Now we want to prove that in \eqref{eq:kinetic_th_lim_rho} we can consider test functions $\chi\i$ depending also on $t$. In particular, taking $\zeta\i(t,x)\in \mc_c([0,T); \mc_b^1(\R^d))$ we have that
\[ \int_{\R^d} \zeta\i(t,x)\rho\i^{\varepsilon_k}(t,x)\,dx \]
are equicontinuous on $[0,T)$.
Indeed, considering $s,t\in [0,T),$
\begin{align*}
  &  \abss{\int_{\R^d} \zeta\i(t,x)\rho\i^{\varepsilon_k}(t,x)\,dx-\int_{\R^d} \zeta\i(s,x)\rho\i^{\varepsilon_k}(s,x)\,dx} \\ & \quad \leq
  \int_{\R^d} \abs{\zeta\i(t,x)-\zeta\i(s,x)}\rho\i^{\varepsilon_k}(t,x)\,dx + \abss{\int_{\R^d} \zeta\i(s,x) [\rho\i^{\varepsilon_k}(t,x)-\rho\i^{\varepsilon_k}(s,x)]\,dx} \\ & \quad \leq \sup_{x\in\R^d} \abs{\zeta\i(t,x)-\zeta\i(s,x)}+C\i(T)\sup_{t\in (0,T)} \norm{\zeta\i}_{\mc_b^1(\R^d)} \abs{t-s}.
\end{align*}
Since $\zeta\i$ is uniformly continuous on $[0,T)\times\R^d,$ then
\[ \sup_{x\in\R^d} \abs{\zeta\i(t,x)-\zeta\i(s,x)}\to 0 \quad \text{as $\abs{t-s}\to 0$,} \]
and we get equicontinuity. Thus, up to a subsequence,
\begin{equation}
\label{eq:kinetic_th_limit_rho}
\int_{\R^d} \zeta\i(t,x)\rho\i^{\varepsilon_k}(t,x)\,dx\to\int_{\R^d} \zeta\i(t,x)\rho\i(t,x)\,dx 
\end{equation}
uniformly on $[0,T)$ as $\varepsilon_k\to 0$, for any test functions $\zeta\i \in \mc_c([0,T);\mc_b^1(\R^d))$. \\
Now, set
\[ \Omega_1(T)\coloneqq \{ x\, :\, (x,v)\in\Omega(T)\}. \]
We can deduce that $\Omega_1$ is bounded and both $\text{supp}\prt{\ro}$ and $\text{supp}\prt{\ro^{\varepsilon_k} }$ are in $\Omega_1(T)$ for all $t\in [0,T].$
Consider $\Psi\i \in \mc_c^1([0,T);\mc_b^1(\R^d))$ and let $\phi\i(x,v,t)=\Psi\i(x,t)$ in \eqref{eq:weak_solution_fg}, as $\fori$. Hence
\begin{equation}
\label{eq:weak_rho}
\int_0^T\int_{\R^d} \partial_t\Psi\i \rho\i^{\varepsilon_k}\,dx\,dt+\int_0^T\iint_{\R^{2d}} \nabla_x\Psi\i \cdot vf\i^{\varepsilon_k}\,dx\,dv\,dt +\int_{\R^d}\Psi\i(0)\rho_{i0} (x)\,dx=0.
\end{equation}
Regarding the first integral in \eqref{eq:weak_rho}, by \eqref{eq:kinetic_th_limit_rho} we have that
\[
    \int_0^T\int_{\R^d} \partial_t \Psi\i\rho\i^{\varepsilon_k} \,dx\,dt\to \int_0^T\int_{\R^d} \partial_t \Psi\i \rho\i\,dx\,dt, 
    \]
as $\varepsilon_k\to 0.$
Concerning the integrand of the second term in \eqref{eq:weak_rho}, it can be rewritten as
\begin{align*} 
\iint_{\R^{2d}} \nabla_x\Psi\i\cdot vf\i^{\varepsilon_k}\,dx\,dv= & \iint_{\R^{2d}} \nabla_x \Psi\i \cdot \bigg(v+\sum_{j=1}^N \nabla K\ij \ast \rho\j^{\varepsilon_k} \bigg) f\i^{\varepsilon_k} \,dx\,dv \\& \quad -\iint_{\R^{2d}} \nabla_x\Psi\i  \cdot \bigg( \sum_{j=1}^N \nabla K\ij\ast\rho\j^{\varepsilon _k}\bigg) f\i^{\varepsilon_k}\,dx\,dv.
\end{align*}
By Proposition \ref{prop:estimate_moll}, we have that
\begin{equation}
\label{eq:first_integral_rho}
    \iint_{\R^{2d}} \nabla_x \Psi\i \cdot \bigg(v+\sum_{j=1}^N \nabla K\ij \ast \rho\j^{\varepsilon_k} \bigg) f\i^{\varepsilon_k} \,dx\,dv\to 0, 
\end{equation}
as $\varepsilon_k\to 0$, uniformly in $t$.
The families $\{ \nabla K\ij\ast\rho\j^{\varepsilon_k}(t,\cdot)\}$ are bounded in $W^{1,\infty}(\R^d)$ for all $t\in [0,T)$. In particular,
\[
 \norm{\nabla K\ij\ast\rho\j^{\varepsilon_k}(t,\cdot)} _{W^{1,\infty}(\R^d)}\leq \norm{\nabla K\ij}_{W^{1,\infty}(\R^d)}.
 \]
Now, we want to prove that $ \{ \nabla K\ij\ast\rho\j^{\varepsilon_k} \}$ are equicontinuous in $t$.
In order to use inequalities in \eqref{eq:kinetic_th_bound_rho} with the kernels in places of $\chi\i$, we should mollify $K\ij.$ Let 
\[
    K\ij ^{(n)}=K\ij\ast\gamma^{(n)}, 
\]
where $\gamma^{(n)}$ is the mollifier defined in Section \ref{subsect:estimate_measure_valued}. 
It follows that
\[
    \nabla K\ij ^{(n)}=\nabla  K\ij\ast\gamma^{(n)}, 
\]
thus, we have
\begin{gather*}
    \norm{\nabla K\ij^{(n)}}_{\mc_b^1}\leq \norm{\nabla K\ij}_{W^{1,\infty}},
\end{gather*}
for all $n\geq 1.$ Now, considering the mollified interaction kernels acting on the $i$-th species in estimates \eqref{eq:kinetic_th_bound_rho} in places of $\chi\i$, we get
\[
    \sup_x\norm{ \sum_{j=1}^N \nabla K\ij^{(n)}\ast\rho\j^{\varepsilon_k}}_{W^{1,\infty}(0,T)} \leq C\i(T) \sum_{j=1}^N \norm{\nabla K\ij^{(n)}}_{\mc_b^1}  \leq C\i(T) \sum_{j=1}^N \norm{\nabla K\ij}_{W^{1,\infty}} .
\]
Furthermore, 
\begin{align*}
    \bigg\lVert\sum_{j=1}^N \nabla K\ij^{(n)}\ast\rho\j^{\varepsilon_k}\bigg\rVert_{W^{1,\infty}(\R^d)} \leq \sum_{j=1}^N \norm{\nabla K\ij^{(n)}}_{W^{1,\infty}}  \leq \sum_{j=1}^N \norm{\nabla K\ij}_{W^{1,\infty}},
\end{align*}
thus, we find that
\[
    \bigg \lVert \sum_{j=1}^N \nabla K\ij^{(n)}\ast\rho\j^{\varepsilon_k} \bigg\rVert_{W^{1,\infty}(\R^d \times (0,T))} \leq (1+C\i(T)) \sum_{j=1}^N \norm{\nabla K\ij}_{W^{1,\infty}}.
\]
Therefore, for all $x,y\in\R^d$ and $s,t\in [0,T)$, we get
\begin{equation}
\label{eq:kinetic_th_uniflim_rho}
\begin{aligned}
    \abss{ \sum_{j=1}^N \big[ & \nabla K\ij^{(n)}\ast\rho\j^{\varepsilon_k}(t,x) -\nabla K\ij^{(n)}\ast\rho\j^{\varepsilon_k}(s,y) \big] } \\ & \leq (C\i(T)+1) \bigg( \sum_{j=1}^N  \norm{\nabla K\ij}_{W^{1,\infty}} \bigg) \big( \abs{t-s}+\abs{x-y}\big).
\end{aligned}
\end{equation}
Since $\nabla K\ij$ are continuous, by Lemma \ref{lemma:convolution_convergence} we get
\[ 
\nabla K\ij^{(n)}\to\nabla K\ij, 
\]
uniformly on compact sets in $\R^d$. Since
\[
   \abss{ \sum_{j=1}^N \big[ \nabla K\ij^{(n)}\ast \rho\j^{\varepsilon_k} (t,x) - \nabla K\ij\ast \rho\j^{\varepsilon_k} (t,x)}  \leq \sup_x \bigg[  \abss{ \sum_{j=1}^N \nabla K\ij^{(n)}(x)-\nabla K\ij(x)}  \bigg], 
\]
we have that for any compact set $A\subset\R^d,$
\[
    \sum_{j=1}^N \nabla K\ij^{(n)}\ast\rho\j^{\varepsilon_k} \prt{ t,x}  \xrightarrow{n\to\infty} \sum_{j=1}^N \nabla K\ij \ast\rho\j^{\varepsilon_k} \prt{t,x},
\]
uniformly for $t\in [0,T)$, for $x\in A$, $k\in\mathbb{N}$. Therefore, considering the limit as $n\to\infty$ in \eqref{eq:kinetic_th_uniflim_rho} on a compact set $A\subset\R^d$, we get
\begin{align*}
    \abss{ \sum_{j=1}^N & \big[ \nabla K\ij\ast\rho\j^{\varepsilon_k}(t,x)  -\nabla K\ij\ast\rho\j^{\varepsilon_k}(s,y) \big] } \\& \leq (C(T)+1) \bigg( \sum_{j=1}^N  \norm{\nabla K\ij}_{W^{1,\infty}} \bigg) \big( \abs{t-s}+\abs{x-y}\big).
\end{align*}
Thus, by Ascoli-Arzelà theorem, there exist $N$ subsequences still denoted by $\rho\i^{\varepsilon_k}$, as $\fori$, such that
\[
   \sum_{j=1}^N \nabla K\ij\ast\rho\j^{\varepsilon_k} \to \sum_{j=1}^N  \nabla K\ij \ast\rho\j, 
    \]
as $\varepsilon_k\to 0$, strongly in $\ninf ([0,T)\times A)$, with $A\subset \R^d$ compact set. 
Hence, for every $t\in [0,T)$,
\begin{align*}
     \abss{ \int_{\R^d} & \nabla \Psi\i \cdot \bigg[ \sum_{j=1}^N (\nabla K\ij\ast\rho\j^{\varepsilon_k} )\rho\i^{\varepsilon_k} -\sum_{j=1}^N(\nabla K\ij\ast\rho\j)\rho\i \bigg] \,dx } 
    \\ % \leq & \sum_{j=1}^N \int_{\Omega _1(T)} \abs{\nabla \Psi\i} \cdot \abs{ \nabla K\ij\ast\rho\j^{\varepsilon_k} -\nabla K\ij\ast\rho\j } \rho\i^{\varepsilon_k}  \,dx  \\
    %& \quad + \sum_{j=1}^N \abss{ \int_{\R^d} \nabla \Psi\i \cdot (\nabla K\ij\ast\rho\j) (\rho\i^{\varepsilon_k}-\rho\i)\,dx } \\ 
    \leq & \sum_{j=1}^N \norm{ \nabla K\ij\ast\rho\j^{\varepsilon_k} -\nabla K\ij\ast\rho\j} _{\ninf (\Omega _1(T))} \norm{\nabla \Psi\i}_\ninf \\ 
    & \quad + \sum_{j=1}^N\abss{ \int_{\Omega_1(T)} \nabla \Psi\i \cdot (\nabla K\ij\ast\rho\j)(\rho\i^{\varepsilon_k}-\rho\i ) \,dx},
\end{align*}
and the first term goes to zero as $\varepsilon_k\to 0$ uniformly on $[0,T)$ and the second integral vanishes as $\varepsilon_k\to 0$ by \eqref{eq:kinetic_th_vanish_meas}. Combining this with \eqref{eq:first_integral_rho} we obtain that, for each $t\in (0,T),$
\[ \iint_{\R^{2d}} \nabla\Psi\i \cdot vf\i^{\varepsilon_k} \,dx\,dv \to -\int_{\R^d} \nabla\Psi\i \cdot \bigg[ \sum_{j=1}^N \prt{ \nabla K\ij\ast\rho\j }\rho\i \bigg] \,dx \]
as $\varepsilon_k\to 0$. Finally, define
\[\Omega_2(T)=\{ v\in\R^d \, :\, (x,v)\in \Omega(T) \}. \]
We have that $\Omega_2(T)$ is bounded for all $t\in (0,T)$ and the following uniform estimate holds:
\[ \abss{ \iint_{\R^{2d}} \nabla \Psi\i \cdot vf\i^{\varepsilon_k}\,dx\,dv } \leq D\i \norm{\nabla \Psi\i} _{\ninf (\R^d)}, \]
where the constant $D\i$ depends only on $\Omega_2(T).$ This implies, by Lebesgue's dominated convergence theorem, that
\[ \int_0^T \iint_{\R^{2d}} \nabla \Psi\i \cdot vf\i^{\varepsilon_k}\,dx\,dv\,dt\to - \int_0^T \int_{\R^d} \nabla \Psi\i \cdot \bigg[  \sum_{j=1}^N \prt{ \nabla K\ij\ast\rho\j } \rho\i \bigg]\, dx\,dt \]
as $\varepsilon_k\to 0$. Thus the limiting $N$-tuple of measures $\ro\in \mc([0,T);\mathcal{P}(\R)^N)$ is a solution to system \eqref{eq:system_compact} in the weak sense.
\end{proof}

We now give two corollaries concerning the uniqueness of solutions to system \eqref{eq:mainsystem}.
 
\begin{corollary} Assume that the assumptions in Theorem \ref{th:kinetic_limit} and Proposition \ref{prop:stability} hold. Then, the $N$-tuple $\ro \in \mc([0,T); \mathcal{P}_1 (\R^d)^N)$ obtained in Theorem \ref{th:kinetic_limit} is the unique solution to system \eqref{eq:mainsystem}.
\end{corollary}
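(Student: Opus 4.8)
The plan is to prove uniqueness of weak solutions of \eqref{eq:mainsystem} directly, by a Dobrushin-type $\mathcal{W}_1$-contraction argument at the macroscopic level; uniqueness then identifies the limit $\ro$ of Theorem \ref{th:kinetic_limit} as \emph{the} solution. So let $\ro^1,\ro^2\in \mc([0,T);\mathcal{P}_1(\R^d)^N)$ be two weak solutions of \eqref{eq:mainsystem} in the sense of Definition \ref{def:first_order_ws} sharing the common initial datum $\ro_0=(\rho_{i0})_{i=1}^N$; by construction (and by the compact-support setting guaranteed by the assumptions of Theorem \ref{th:kinetic_limit} and Proposition \ref{prop:stability}) the tuple $\ro$ of Theorem \ref{th:kinetic_limit} is one such solution. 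For $\ell=1,2$ and $i=1,\dots,N$ set $w^\ell_i(t,x)\coloneqq-\sum_{j=1}^N \nabla K_{ij}\ast\rho^\ell_j(t,x)$. Since each $\rho^\ell_j(t,\cdot)$ is a probability measure and $\nabla K_{ij}\in W^{1,\infty}(\R^d)$ by \eqref{pot}, the fields $w^\ell_i$ are bounded by $\sum_j\norm{\nabla K_{ij}}_{L^\infty}$ and globally Lipschitz in $x$ with constant $L_i\coloneqq\sum_j\norm{\nabla K_{ij}}_{W^{1,\infty}}$, uniformly in $t\in[0,T)$; moreover $t\mapsto w^\ell_i(t,\cdot)$ is continuous, uniformly on compact sets, because $t\mapsto\ro^\ell(t)$ is $\mathcal{W}_1$-continuous and $\nabla K_{ij}$ is Lipschitz, exactly as in the proof of Lemma \ref{lemma:cont_time}.

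Next I would pass to the Lagrangian picture. For each $\ell,i$ the Cauchy problem $\tfrac{d}{dt}\eta^{\ell,t}_i(x)=w^\ell_i(t,\eta^{\ell,t}_i(x))$, $\eta^{\ell,0}_i(x)=x$, has a unique global solution, and $\eta^{\ell,t}_i:\R^d\to\R^d$ is a bi-Lipschitz homeomorphism, by Cauchy--Lipschitz, since $w^\ell_i$ is bounded, Lipschitz in space and continuous in time. Viewing $\rho^\ell_i$ as a solution of the \emph{linear} continuity equation $\partial_t\rho_i+\nabla\cdot(\rho_i w^\ell_i)=0$ with the now-fixed velocity $w^\ell_i$, the classical uniqueness and representation theory for the continuity equation with Lipschitz velocity field (see \cite{ambrosio03,ags}, after a routine density argument in the class of test functions) yields $\rho^\ell_i(t)=\eta^{\ell,t}_i\#\rho_{i0}$ for every $t\in[0,T)$. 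Since the coupling $(\eta^{1,t}_i,\eta^{2,t}_i)\#\rho_{i0}$ is admissible between $\rho^1_i(t)$ and $\rho^2_i(t)$, setting
\[
G_i(t)\coloneqq\intr \abs{\eta^{1,t}_i(x)-\eta^{2,t}_i(x)}\,d\rho_{i0}(x),\qquad G(t)\coloneqq\sum_{i=1}^N G_i(t),
\]
we get $W_1(\rho^1_i(t),\rho^2_i(t))\le G_i(t)$ for every $i$.

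Finally I would close the Gr\"onwall estimate. For every $x$, along the two flows,
\[
\frac{d}{dt}\,\abs{\eta^{1,t}_i(x)-\eta^{2,t}_i(x)}\le \abs{w^1_i(t,\eta^{1,t}_i(x))-w^2_i(t,\eta^{2,t}_i(x))}\le L_i\abs{\eta^{1,t}_i(x)-\eta^{2,t}_i(x)}+\norm{w^1_i(t,\cdot)-w^2_i(t,\cdot)}_{L^\infty},
\]
while the Kantorovich--Rubinstein duality together with $\nabla K_{ij}\in W^{1,\infty}$ gives
\[
\norm{w^1_i(t,\cdot)-w^2_i(t,\cdot)}_{L^\infty}\le \sum_{j=1}^N\norm{\nabla K_{ij}}_{W^{1,\infty}}\,W_1(\rho^1_j(t),\rho^2_j(t))\le L_i\,G(t).
\]
Integrating in time (recall $G_i(0)=0$), bounding $G_i\le G$, and summing over $i$, one obtains $G(t)\le C\int_0^t G(s)\,ds$ with $C$ depending only on $N$ and the $W^{1,\infty}$ norms of the $\nabla K_{ij}$; hence $G\equiv0$ by Gr\"onwall, so $\rho^1_i=\rho^2_i$ for all $i$ and all $t\in[0,T)$. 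Therefore the tuple $\ro$ of Theorem \ref{th:kinetic_limit} is the unique weak solution of \eqref{eq:mainsystem} with initial datum $\ro_0$. An equivalent route is to run the contraction scheme of Theorem \ref{th:existence} for the first order flow $\dot x=w_i$ in place of the kinetic characteristics, using Lemmas \ref{lemma:bou_dist_init_data} and \ref{lemma:lip_contr}. The only genuinely delicate point is the Lagrangian representation $\rho^\ell_i(t)=\eta^{\ell,t}_i\#\rho_{i0}$: although standard for the linear continuity equation, it requires that the velocity fields extracted from the weak solutions have enough time regularity for the flow to be well defined, which here is supplied by the $\mathcal{W}_1$-continuity built into the definition of weak solution.
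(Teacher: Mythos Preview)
Your argument is correct. You establish uniqueness for weak solutions of \eqref{eq:mainsystem} directly, via a Dobrushin-type $\mathcal{W}_1$-contraction at the macroscopic level: represent each solution as the push-forward of the common initial datum along its own characteristic flow, bound the difference of the velocity fields by Kantorovich--Rubinstein duality and the $W^{1,\infty}$ assumption on $\nabla K_{ij}$, and close a Gr\"onwall inequality on $\sum_i W_1(\rho^1_i,\rho^2_i)$.

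The paper's proof is different in presentation and far terser: it simply invokes Proposition~\ref{prop:stability}, the $\mathcal{W}_1$-stability estimate for the \emph{kinetic} system, and asserts that two solutions with the same initial datum must coincide by \eqref{eq:stability}. Read literally this is a non sequitur, since \eqref{eq:stability} controls distances between kinetic solutions $\f,\g$, not between solutions of the first-order system \eqref{eq:mainsystem}. The intended meaning is presumably that the same contraction machinery behind Proposition~\ref{prop:stability} (Lemmas~\ref{lemma:bou_dist_init_data}, \ref{lemma:depending2}, \ref{lemma:lip_contr} and \ref{lemma:norm_bound_2} together with Gr\"onwall) applies verbatim to the first-order characteristic flow $\dot x=-\sum_j\nabla K_{ij}\ast\rho_j$, which is exactly what you carry out in detail. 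So your proof is the spelled-out macroscopic analogue of the argument the paper gestures at; the ``equivalent route'' you mention at the end, running the fixed-point scheme of Theorem~\ref{th:existence} on the first-order characteristics, is the most direct transcription of what the paper presumably intends. Your version has the advantage of being self-contained and of making explicit the one genuinely delicate step, the Lagrangian representation $\rho^\ell_i(t)=\eta^{\ell,t}_i\#\rho_{i0}$, which the paper's proof leaves entirely implicit.
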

\begin{proof}
The proof follows by Proposition \ref{prop:stability}. Indeed, if we assume that there are two solutions starting from the same initial datum, by \eqref{eq:stability} we have the statement.
\end{proof}

\begin{corollary} Assume that assumptions in Theorem \ref{th:kinetic_limit} hold. Moreover, assume that the cross-interaction kernels are equal, i.e., $H\coloneqq K\ij$, for all $i\neq j$. Then the solution to system \eqref{eq:mainsystem} obtained in Theorem \ref{th:kinetic_limit} is unique.
\end{corollary}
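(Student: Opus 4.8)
The plan is to exploit the extra structure that the hypothesis $H \coloneqq K\ij$ ($i\neq j$) gives to \eqref{eq:mainsystem}. Since this forces $K\ij = K_{ji}$ for all $i\neq j$, the system is \emph{symmetric} in the sense recalled in the Introduction, hence (formally) the $\W_1$--$W_2$ gradient flow of the free energy
\[
\calF(\ro) = \sum_{i,j=1}^N \intr \rho\i \, K\ij\ast\rho\j \, dx .
\]
First I would record that, under \eqref{pot}, each $K\ij$ is $\lambda$-convex: $\nabla K\ij \in W^{1,\infty}(\R^d)$ gives $\nabla^2 K\ij \in \ninf$, so $\nabla^2 K\ij \geq -\norm{\nabla^2 K\ij}_{\ninf}\, I$. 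A standard displacement-convexity computation — using the symmetry $K\ij=K_{ji}$ to combine the second variations of the cross terms with the self-interaction terms into a bounded-below quadratic form — then shows that $\calF$ is $\lambda$-geodesically convex on the product space $(\mathcal{P}_2(\R^d)^N, W_2)$ for some finite $\lambda\in\R$ depending only on $\max_{i,j}\norm{\nabla^2 K\ij}_{\ninf}$. By Proposition \ref{prop:support} the candidate solution $\ro$ from Theorem \ref{th:kinetic_limit} is supported, uniformly in $t\in[0,T]$, in a fixed ball, so it belongs to $\mathcal{P}_2(\R^d)^N$ and lives in the right space.

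Next I would invoke the $\lambda$-convex gradient flow theory of \cite{ags}, specialised to interaction systems as in \cite{CDFLS,difrafag}: the gradient flow of $\calF$ is well posed, its trajectories are exactly the weak solutions of \eqref{eq:mainsystem} in the sense of Definition \ref{def:first_order_ws} issued from a given datum, and any two trajectories $\ro,\tilde\ro$ satisfy the contraction estimate
\[
\sum_{i=1}^N W_2^2(\rho\i(t),\tilde\rho\i(t)) \leq e^{-2\lambda t}\sum_{i=1}^N W_2^2(\rho_{i0},\tilde\rho_{i0}), \qquad t\in[0,T).
\]
In particular, two weak solutions with the same initial data coincide, and since $W_1\leq W_2$ this also fixes the $\W_1$-limit. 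It then remains only to identify the $\ro$ delivered by Theorem \ref{th:kinetic_limit} (with $\rho_{i0} = \intr f_{i0}\,dv$) with this gradient flow, which follows because $\ro$ is, by construction, a weak solution in the sense of Definition \ref{def:first_order_ws}, and uniqueness holds in that class.

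The hard part is exactly this last identification, i.e.\ ensuring that the very weak (measure-valued, no a priori regularity) notion of Definition \ref{def:first_order_ws} does not describe a strictly larger set than the gradient flow trajectories. There are two ways to close this gap. One is intrinsic to the symmetric setting: show that every Definition \ref{def:first_order_ws} solution is a curve of maximal slope for the $\lambda$-convex functional $\calF$, hence the gradient flow; this is where the symmetry is genuinely used, and it connects the statement with \cite{difrafag}. The other, and the one I would actually carry out because it is elementary, bypasses the symmetry: under \eqref{pot} the field $E\i[\ro](t,\cdot) = -\sum_{j}\nabla K\ij\ast\rho\j(t,\cdot)$ is bounded and uniformly Lipschitz in $x$ as soon as the $\rho\j$ are probability measures, so by the superposition principle any weak solution must be transported by the unique flow $\Phi\i^t[\ro]$ of $E\i[\ro]$. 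Writing $D\i(t) \coloneqq \intr \abs{\Phi\i^t[\ro](x) - \Phi\i^t[\tilde\ro](x)}^2\,d\rho_{i0}(x)$ for two solutions with common datum, differentiating in $t$, and using the Lipschitz bound on $\nabla K\ij$ together with Cauchy--Schwarz gives $\frac{d}{dt}\sum_{i=1}^N D\i(t) \leq C \sum_{i=1}^N D\i(t)$ with $C$ depending only on $\sum_{i,j}Lip_{R(T)}(\nabla K\ij)$, whence Gr\"onwall's lemma and $D\i(0)=0$ force $D\i\equiv 0$ and $\ro=\tilde\ro$. Either route yields that the solution produced in Theorem \ref{th:kinetic_limit} is the unique one.
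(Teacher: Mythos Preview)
Your second route---representing the weak solution as a push-forward via the flow of $\E[\ro]$ and then running a Gr\"onwall argument on the flow-map discrepancy---is the paper's approach: the paper invokes \cite[Theorem~5.1]{fs} to obtain the push-forward representation, records compact support and narrow continuity in time, and concludes that $\ro$ is ``the unique solution to \eqref{eq:system_compact} in the mass transportation sense'' by reference to the one-species theory. Your version is more self-contained (you supply the Gr\"onwall bound on $\sum_i D_i$ explicitly rather than citing), and, as you correctly observe, it does not actually require the hypothesis $K_{ij}=H$; that assumption appears to be carried so as to align with the cited literature rather than because the argument needs it.

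Your first route, via the gradient-flow structure and $\lambda$-convexity of $\calF$ in the spirit of \cite{ags,CDFLS,difrafag}, is a genuinely different alternative that does exploit the symmetry $K_{ij}=K_{ji}$. It buys you a $W_2$-contraction estimate (hence uniqueness with a rate), but at the price of the identification step you flag: showing that every solution in the sense of Definition~\ref{def:first_order_ws} is a curve of maximal slope for $\calF$. You are right that this is the nontrivial part, and right to sidestep it via the elementary transport argument, which is exactly what the paper does.
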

\begin{proof}
Since $\ro \in \mc([0,T), \mathcal{P}_1(\R^d)^N)$ is a weak solution to \eqref{eq:system_compact}, by \cite[Theorem 5.1]{fs} and the references therein, we can say that $\ro$ is the push-forward of $\ro_0$ via the flow $\T^t_{\E[\f]}$ where $\E[\f] = \prt{E\i [\f] }_{i=1}^N$ with
\[
E\i [\f] =-\sum_{j=1}^N \nabla K\ij\ast\rho\j \in \ninf ([0,T)\times \R^d), 
    \] 
that is
\[ \ro = \T^t_{\E[\f]}\#\ro_0. \]
Furthermore, $\ro(t,\cdot)$ has compact support and it is narrowly continuous in time, since we get that $\ro(t,\cdot)\in \mc([0,T);\mathcal{P}_1(\R^d)^N)$ where the continuity is in the $\W_1$ metric, (see Proposition \ref{prop:convergence_properties}). Then $\ro$ is the unique solution to \eqref{eq:system_compact} in the mass transportation sense.
\end{proof}

\section{Singular interaction potentials}
\label{sec:singular_model}

In this section, we investigate the case of singular self-interaction potentials and smooth cross-potentials and provide the details of the proof of Theorem \ref{thm_ktoc}. For this, we first discuss the existence of solutions to the coupled kinetic and first order macroscopic equations. We recall the kinetic and  macroscopic order systems:
\begin{equation}\label{eq:singular_main}
\partial_t f\i +v\cdot \nabla_x f\i =\frac{1}{\varepsilon} \nabla_v \cdot (vf\i ) + \frac{1}{\varepsilon} \bigg( \sum_{j=1}^N \nabla K\ij\ast \rho\j \bigg) \cdot \nabla_v f\i,
\end{equation}
for $i=1,\dots, N$, where $\rho\i (t,x)$ is the macroscopic population density of the $i$-th species, i.e.,
\[ \rho \i (t,x) = \int_{\R^d} f\i (t,x,v)\,dv \]
and 
\begin{equation}\label{eq:singular_limit}
\begin{dcases}
\partial_t \rho_i =  \nabla \cdot (\rho_i u_i),  \\ 
 u_i =  \sum_{j=1}^N \nabla K_{ij}\ast \rho_j,
\end{dcases}
\end{equation}
for $\fori$. Here the cross-potentials $K\ij$, $i\neq j$, are given as in \eqref{pot} and singular self-potentials $K_{ii}$ are of the form
\begin{equation}
\label{eq:singular_potential}
K\ii(x)= \frac{C\i}{\abs{x}^\ai},
\end{equation}
with $\ai \in (0,d)$ and some positive constants $C\i$. 

In the following two subsections, we establish the existence theory for the systems \eqref{eq:singular_main} and \eqref{eq:singular_limit} satisfying required regularity conditions stated in Theorem \ref{thm_ktoc}, respectively. As mentioned in Remark \ref{rmk_sing_main}, due to some technical difficulties, we construct the global/local-in-time solutions to the systems \eqref{eq:singular_main} and \eqref{eq:singular_limit} in a little more restrictive setting.

\subsection{Existence for solution to the kinetic system}
In this subsection, motivated from \cite{choi_jeong_kinetic}, we investigate the global-in-time existence of weak solutions to the kinetic system \eqref{eq:singular_main} when $\alpha_i \in (0,d-1]$. 

%The notion of solution we adopt for the kinetic system \eqref{eq:singular_main} is that of weak solution contained in the next Definition.

%\begin{definition}[Weak solution to \eqref{eq:singular_main}]
%\label{def:weak_sol_sing_kin}
%Let $\f_0 = (f_{i0})_{i=1}^N \in \mathcal{P}_1 (\Rdd)^N$ be the initial datum. A weak solution to \eqref{eq:singular_main} is a $N$-tuple $\f=(f\i)_{i=1}^N \in \mc([0,T),\mathcal{P}(\Rdd)^N)$ that fulfils
%\begin{align*}
%\int_0^T \iint_{\Rdd}  \partial_t \phi\i f\i & \,dx\,dv\,dt+ \int_0^T \iint_\Rdd  v \cdot \nabla_x \phi\i f\i \,dx\,dv\,dt \\ 
% + \iint_\Rdd & \phi\i(0)f_{i0} \,dx\,dv - \frac{1}{\varepsilon} \int_0^T \iint_\Rdd v\cdot \nabla_v\phi\i f\i \,dx\,dv\,dt \\
%& + \frac{1}{\e} \int_0^T \iint_\Rdd \nabla_v \phi\i \cdot \bigg( \sum_{j=1}^N \nabla K\ij \ast \rho\i \bigg) f\i \,dx\,dv\,dt =0,
%\end{align*}
%for each $\phi\i\in \mc_c^1([0,T);\mc_b^1(\Rdd))$, as $\fori$. 
%\end{definition}

We start by considering a regularised version of the system \eqref{eq:singular_main}. For this purpose, we perturb the self-potentials and consider the following system
\begin{equation}
\label{eq:singular_regular}
\partial_t f\i\de+v\cdot \nabla_x f\i\de = \frac{1}{\varepsilon} \nabla_v \cdot (vf\i\de) + \frac{1}{\varepsilon} \bigg(\sum_{j=1}^N \nabla K\ij\de \ast \rho\j\de \bigg)\cdot \nabla_v f\i\de, 
\end{equation}
for $\fori$, with
\[ K\ii\de (x) \coloneqq \frac{C\i}{\abs{x}^\ai+\delta}, \]
and
\[ \rho\i\de (t,x) \coloneqq \int_{\R^d} f\i\de (t,x,v) \,dv. \]
In system \eqref{eq:singular_regular} we set $K\ij\de \coloneqq K\ij$, for $i\neq j$, in order to keep the notation to a minimum.
Notice that the global-in-time existence and uniqueness of a weak solution to the regularised system \eqref{eq:singular_regular} follows by the results developed in Section \ref{sec:smooth}, since the force fields $\nabla K\ij\de \ast\rho\j\de$ are bounded and Lipschitz continuous. Throughout this subsection, we assume $\alpha_i \in (0,d-1]$.

\subsubsection{Uniform in $\delta$ estimates}
In this part, we gather some uniform in $\delta$ estimates that we will apply for proving the existence of solutions to system \eqref{eq:singular_main}. Let us begin with $L^\infty$ bound estimates.

\begin{lemma} Let $T>0$ and $\f\de \coloneqq (f_1\de, \ldots , f_N\de)$ be the weak solution to \eqref{eq:singular_regular} on the interval $[0,T]$ in the sense of Definition \ref{def:solution}. Then we have
\[ \sup_{0\leq t\leq T} \norm{f\i\de (\cdot , \cdot, t)}_{L^p} \leq \norm{f\de _{i0} }_{L^p} e^{\frac{d}{\varepsilon} (1-\frac{1}{p})T },  \]
for $p\in [1, +\infty)$, and
\[ \sup_{0\leq t\leq T} \norm{f\i\de (\cdot , \cdot, t)}_{L^\infty} \leq \norm{f\de _{i0} }_{L^\infty} e^{\frac{d}{\varepsilon}T }. \]
\end{lemma}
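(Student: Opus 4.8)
The plan is to derive an $L^p$ transport-type identity along the characteristics of the regularised system and use the favourable sign structure of the damping term. First I would fix $p \in [1,\infty)$ and compute $\frac{d}{dt}\|f_i^\delta(t)\|_{L^p}^p$. Writing the equation \eqref{eq:singular_regular} in the form
\[
\partial_t f_i^\delta + v\cdot\nabla_x f_i^\delta - \tfrac{1}{\varepsilon}\, v\cdot\nabla_v f_i^\delta + \tfrac{1}{\varepsilon}\Big(\sum_{j=1}^N \nabla K_{ij}^\delta \ast \rho_j^\delta\Big)\cdot\nabla_v f_i^\delta = \tfrac{d}{\varepsilon} f_i^\delta,
\]
one multiplies by $p |f_i^\delta|^{p-2}f_i^\delta$ and integrates over $\R^d\times\R^d$. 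The transport terms $v\cdot\nabla_x$ and $-\tfrac1\varepsilon v\cdot\nabla_v + \tfrac1\varepsilon(\sum_j \nabla K_{ij}^\delta\ast\rho_j^\delta)\cdot\nabla_v$ are in divergence form up to the factor $\tfrac{d}{\varepsilon}$ already extracted (the $x$-transport is divergence-free in $x$, the $v$-field $-v + \sum_j \nabla K_{ij}^\delta\ast\rho_j^\delta$ has $v$-divergence equal to $-d$), so after integration by parts each contributes zero; the only surviving term on the right is $\tfrac{d(p-1)}{\varepsilon}\int |f_i^\delta|^p$. Hence $\frac{d}{dt}\|f_i^\delta(t)\|_{L^p}^p \le \tfrac{d(p-1)}{\varepsilon}\|f_i^\delta(t)\|_{L^p}^p$, and Grönwall gives $\|f_i^\delta(t)\|_{L^p}^p \le \|f_{i0}^\delta\|_{L^p}^p\, e^{\frac{d(p-1)}{\varepsilon}t}$, i.e.\ the claimed bound with exponent $\frac{d}{\varepsilon}(1-\frac1p)T$. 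The $L^\infty$ bound then follows by sending $p\to\infty$ in this inequality, using that $\|f_{i0}^\delta\|_{L^p}\to\|f_{i0}^\delta\|_{L^\infty}$.

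An alternative, and perhaps cleaner, route is to work directly along the characteristic flow $\mathcal{T}^t_{\E^\delta}$ of the (smooth, $\delta$-regularised) force field: since for $\varepsilon>0$ fixed the system fits the framework of Section~\ref{sec:smooth}, $f_i^\delta(t) = \mathcal{T}^t_{E_i^\delta}\# f_{i0}^\delta$ and one can compute the Jacobian of the flow map explicitly. The velocity equation $\dot V = -\tfrac1\varepsilon V + \tfrac1\varepsilon E_i^\delta(t,X)$ contributes a contraction factor $e^{-t/\varepsilon}$ in the $v$-directions while the position equation $\dot X = V$ is volume preserving, so $\det D\mathcal{T}^t_{E_i^\delta} = e^{-dt/\varepsilon}$; consequently $f_i^\delta(t,\mathcal{T}^t_{E_i^\delta}(x_0,v_0)) = f_{i0}^\delta(x_0,v_0)\, e^{dt/\varepsilon}$, which immediately yields the $L^\infty$ bound with constant $e^{\frac{d}{\varepsilon}t}$, and a change of variables in the $L^p$ norm recovers the $L^p$ estimate.

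I would likely present the first (energy) argument as the main proof since it avoids invoking regularity of the flow map and works directly for weak solutions, mentioning the characteristic picture as motivation. The only genuinely delicate point is justifying the integration by parts rigorously at the level of weak solutions — one should either argue for smooth approximations $f_i^{\delta,(n)}$ as constructed in Section~\ref{sec:smooth} (Lemma~\ref{lemma:convergence}) and pass to the limit, or simply note that the weak solution is given by a push-forward and hence inherits the $L^p$ bounds through the explicit Jacobian computation. The compact support of $f_i^\delta(t)$ (uniform on $[0,T]$, by Proposition~\ref{prop:support} applied to the bounded field $\nabla K_{ij}^\delta\ast\rho_j^\delta$) ensures all the integrals are finite and no boundary terms at infinity arise, so the manipulations are legitimate; I do not expect any serious obstacle here beyond this bookkeeping.
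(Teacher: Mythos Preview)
Your main energy argument is correct and is essentially the paper's proof: both multiply by $p(f_i^\delta)^{p-1}$, integrate by parts to obtain $\frac{d}{dt}\|f_i^\delta\|_{L^p}^p = \frac{d(p-1)}{\varepsilon}\|f_i^\delta\|_{L^p}^p$, apply Gr\"onwall, and then let $p\to\infty$ for the $L^\infty$ bound. The characteristic/Jacobian alternative you sketch is a valid equivalent route not spelled out in the paper; note a harmless sign slip in your displayed rewritten equation (the force term on the left should carry a minus), which does not affect the argument since its $v$-divergence vanishes either way.
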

\begin{proof}
By integrating by parts with respect to $x$ and $v$, we get
\begin{align*}
\frac{d}{dt} \intdd (f\i\de)^p \,dx\,dv =& -\frac{1}{\varepsilon} p (p-1) \intdd (f\i\de) ^{p-2} \nabla _x f\i\de \cdot vf\i\de \,dx\,dv\\ 
& - \frac{1}{\varepsilon} p (p-1) \intdd (f\i\de)^{p-2} \nabla_v f\i\de \cdot \bigg( \sum_{j=1}^N K\ij\de\ast\rho\j\de \bigg) f\i\de \,dx\,dv\\
 & + p (p-1) \intdd (f\i\de)^{p-2} \nabla_x f\i\de \cdot vf\i\de \,dx\,dv.
% \\
%= & -\frac{1}{\varepsilon} p (p-1) \intdd (f\i\de)^{p-1} \nabla_v f\i\de \cdot v.
%\\
%& - \frac{1}{\varepsilon} p  (p-1) \intdd (f\i\de)^{p-1} \nabla_v f\i\de \cdot \bigg( \sum_{j=1}^N \nabla K\ij\de \ast \rho\j\de \bigg) \\
%& + p(p-1) \intdd (f\i\de)^{p-1} \nabla_x f\i\de \cdot v \\
%&= -\frac{1}{\varepsilon} (p-1) \intdd \nabla_v (f\i\de)^p \cdot v \\
%-& \frac{1}{\varepsilon} (p-1) \intdd \nabla_v (f\i\de)^p \cdot \bigg( \sum_{j=1}^N \nabla K\ij\de \ast \rho\j\de \bigg) \\
%-& (p-1) \intdd \nabla_x (f\i\de)^p \cdot v \\
%=& \frac{1}{\varepsilon} (p-1) \intdd (f\i\de)^p \nabla_v \cdot v \\
%+& \frac{1}{\varepsilon} (p-1) \intdd (f\i\de)^p \nabla_v \cdot \bigg( \sum_{j=1}^N \nabla K\ij\de \ast \rho\j\de \bigg) \\
%-& (p-1) \intdd (f\i\de)^p \nabla_x \cdot v \\
%=& d \frac{1}{\varepsilon} (p-1) \intdd (f\i\de)^p.
\end{align*}
Thus, 
\[ \frac{d}{dt} \intdd (f\i\de)^p\,dx\,dv = d \frac{1}{\varepsilon} (p-1) \intdd (f\i\de)^p \,dx\,dv,  \]
for $p\in [1, +\infty)$. Therefore, by Gr\"onwall's lemma we have
\[ \norm{f\i\de (\cdot , \cdot , t)}^p_{L^p} = \norm{f\de_{i0} }^p_{L^p} e^{\frac{d}{\varepsilon} (p-1)t}. \]
Then, it follows that
\[ \sup_{0\leq t\leq T} \norm{f\i\de (\cdot , \cdot, t)}_{L^p} \leq \norm{f\de _{i0} }_{L^p} e^{\frac{d}{\varepsilon} (1-\frac{1}{p})T }, \]
for $p\in [1, +\infty)$. Sending $p\to +\infty$ in the previous line, we obtain that
\[ \sup_{0\leq t\leq T} \norm{f\i\de (\cdot , \cdot, t)}_{L^\infty} \leq \norm{f\de _{i0} }_{L^\infty} e^{\frac{d}{\varepsilon}T }, \]
that concludes the proof.
\end{proof}

Now we state a Lemma that points out the relationship between the local density and the kinetic energy (cf.\ \cite[Lemma 3.1]{golse_rey}), that we will use to estimate the interaction energy. Notice that in the next result we consider generic functions and we do not work along the solutions to system \eqref{eq:singular_regular}.

The proofs of the following two lemmas are similar to the ones in \cite[Lemma 2.2]{choi_jeong_kinetic} and \cite[Lemma 2.3]{choi_jeong_kinetic}, thus we omit the details.

%\youngpil{The proofs of the following two lemmas would be removed... We may simply cite \cite[Lemma 2.2]{choi_jeong_kinetic} and \cite[Lemma 2.3]{choi_jeong_kinetic}...}
 
\begin{lemma}\label{lem_rho_i} Assume that $f\i \in L^1_+ \cap L^\infty (\Rdd)$ and $\abs{v}^2f\i \in L^1 (\Rdd)$, for $\fori$. Then, there exists a positive constant $C$ such that
\[ \norm{\rho\i} _{L^\frac{d+2}{d}} \leq C \norm{f\i}_{L^\infty} ^{\frac{2}{d+2}} \bigg( \intdd \abs{v}^2 f\i \,dx\,dv \bigg)^{\frac{d}{d+2}}. \]
In particular, we find that
\[ \norm{\rho\i}_{L^p} \leq C \norm{f\i}_{L^\infty}^{\frac{2}{d+2}\beta} \bigg( \intdd \abs{v}^2 f\i \,dx\,dv \bigg) ^{\frac{d}{d+2}\beta} \norm{\rho\i}_{L^1}^{1-\beta}, \]
for all $p\in [1, \frac{d+2}{d}]$, with $\rho\i = \int_{\R^d} f\i\,dv$ and $\beta = \frac{d+2}{2} (1-\frac{1}{p})$.
\end{lemma}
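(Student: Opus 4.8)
The plan is to establish the $L^{(d+2)/d}$ bound by a pointwise-in-$x$ truncation argument in velocity, and then to deduce the general $L^p$ estimate by interpolation against the (trivially controlled) $L^1$ norm of $\rho\i$.

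First I would fix $x \in \R^d$ and split the velocity integral defining $\rho\i(x)$ at a threshold $R>0$:
\[
\rho\i(x) = \int_{\abs{v}\le R} f\i(x,v)\,dv + \int_{\abs{v}> R} f\i(x,v)\,dv \le c_d \norm{f\i}_{L^\infty} R^d + \frac{1}{R^2}\int_{\R^d} \abs{v}^2 f\i(x,v)\,dv,
\]
where $c_d$ denotes the volume of the unit ball in $\R^d$; the first term uses the $L^\infty$ bound on $f\i$ and the second is Chebyshev's inequality in the $v$ variable. Writing $e\i(x) \coloneqq \int_{\R^d}\abs{v}^2 f\i(x,v)\,dv$, which is finite for a.e.\ $x$ since $\abs{v}^2 f\i \in L^1(\Rdd)$, I would then optimise the right-hand side over $R$ by choosing $R \sim \prt{e\i(x)/\norm{f\i}_{L^\infty}}^{1/(d+2)}$ (the estimate being trivial on the set where $e\i(x)=0$), which yields the pointwise bound
\[
\rho\i(x) \le C\,\norm{f\i}_{L^\infty}^{\frac{2}{d+2}}\, e\i(x)^{\frac{d}{d+2}}
\]
for a dimensional constant $C$. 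Raising this to the power $\frac{d+2}{d}$ and integrating in $x$ makes the exponent of $e\i(x)$ equal to one, so Fubini gives $\int_{\R^d} e\i(x)\,dx = \intdd \abs{v}^2 f\i\,dx\,dv$, and hence the first claimed inequality.

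For the second inequality I would observe that, with $\beta = \frac{d+2}{2}\prt{1-\frac1p}$, one has $\beta \in [0,1]$ for $p \in [1,\frac{d+2}{d}]$ and $\frac1p = (1-\beta)\cdot 1 + \beta\cdot\frac{d}{d+2}$; the standard interpolation inequality for Lebesgue norms then gives $\norm{\rho\i}_{L^p}\le \norm{\rho\i}_{L^1}^{1-\beta}\norm{\rho\i}_{L^{(d+2)/d}}^{\beta}$, and substituting the first inequality for the second factor produces exactly the stated bound.

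There is essentially no serious obstacle in this argument; the only points requiring a little care are the treatment of the degenerate set $\{e\i(x)=0\}$ (where $\rho\i(x)=0$) together with the null set where $e\i(x)=+\infty$, and checking that the constant coming from the optimisation in $R$ is genuinely dimensional and hence independent of $i$. Alternatively one can bypass the calculus optimisation entirely by plugging in the explicit value of $R$ above and estimating directly.
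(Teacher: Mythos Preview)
Your argument is correct and is exactly the standard velocity-truncation argument the paper has in mind: the paper omits the proof here and simply refers to \cite[Lemma~2.2]{choi_jeong_kinetic}, whose proof proceeds precisely by splitting the $v$-integral at $\abs{v}=R$, using the $L^\infty$ bound on the small-velocity piece and a Chebyshev-type bound on the large-velocity piece, optimising in $R$, and then interpolating with $L^1$.
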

% \begin{proof}
% Let $R>0$. Then
% \[ \rho\i = \int_{\R^d} f\i\,dv = \bigg( \int_{ \abs{v}\geq R} + \int_{\abs{v}\leq R} \bigg) f\i\ \,dv \leq \frac{1}{R^2} \int_{\R^d} \abs{v}^2 f\i \,dv + C \norm{f\i}_{L^\infty} R^d. \] 
% For $R= (\int_{\R^d} \abs{v}^2 f\i \,dv / \norm{f\i}_{L^\infty})^\frac{1}{d+2}$ we have that
% \[ \rho\i \leq C \norm{f\i}_{L^\infty}^{\frac{2}{d+2}} \bigg( \int_{\R^d} \abs{v}^2 f\i \,dv \bigg)^{\frac{d}{d+2}}. \]
% Taking the power to $\frac{d+2}{d}$ and integrating with respect to $x$, we obtain that
% \[ \norm{\rho\i}_{L^{\frac{d+2}{d}}} \leq C \norm{f\i}_{L^\infty} ^{\frac{2}{d+2}} \bigg( \intdd \abs{v}^2 f\i\,dx\,dv \bigg)^{\frac{d}{d+2}}. \]
% By using the $L^p$ interpolation inequality, we obtain the result.
% \end{proof}

Let us now provide a bound estimate on the interaction energy.

\begin{lemma}\label{lem_poten_ii} Let $T>0$ and $\f\de$ be the weak solution to \eqref{eq:singular_regular} on the interval $[0,T]$. Then
\[ \abss{\intdd K\ii\de (x-y) \rho\i\de (x) \rho\i\de(y) \,dx\,dy} \leq C\i \norm{\rho_{i0}}_{L^1} ^{2-\frac{5}{2d}\ai} \norm{\rho\i\de}_{L^{\frac{d+2}{d}}}^{\frac{5}{2d}\ai}, \]
where $C\i >0$ is independent of $\delta$.
\end{lemma}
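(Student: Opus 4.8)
The strategy is to discard the $\delta$-regularisation at once, reduce the claim to a scale-invariant Riesz-potential bound, and close it by a dyadic truncation of the kernel together with Young's convolution inequality and interpolation between the $L^1$ and $L^{(d+2)/d}$ norms of $\rho\i\de$. Since $\rho\i\de\ge 0$ and
\[
K\ii\de(x-y)=\frac{C\i}{\abs{x-y}^{\ai}+\delta}\le \frac{C\i}{\abs{x-y}^{\ai}},
\]
the quantity to be estimated is nonnegative and bounded, uniformly in $\delta$, by $C\i\intdd \abs{x-y}^{-\ai}\rho\i\de(x)\rho\i\de(y)\,dx\,dy$. Hence it suffices to control this Riesz double integral by $\norm{\rho\i\de}_{L^1}$ and $\norm{\rho\i\de}_{L^{(d+2)/d}}$, and at the very end the $L^1$ norm is replaced by $\norm{\rho_{i0}}_{L^1}$ using conservation of mass $\norm{\rho\i\de(\cdot,\cdot,t)}_{L^1}=\norm{\rho_{i0}}_{L^1}$, which is the $p=1$ case of the preceding $L^p$ lemma.

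Next I would split the kernel at a free scale $R>0$ into its restriction to $\{\abs{z}\le R\}$ and to $\{\abs{z}>R\}$. On the far region the kernel is at most $R^{-\ai}$, so that contribution is bounded by $R^{-\ai}\norm{\rho\i\de}_{L^1}^2$. On the near region I write the contribution as $\int_{\R^d}\rho\i\de\,(w_R\ast\rho\i\de)\,dx$ with $w_R(z)=\abs{z}^{-\ai}$ cut off to $\{\abs{z}\le R\}$, and use that $\norm{w_R}_{L^q}=c_{d,\ai}R^{d/q-\ai}$ for every $q<d/\ai$. Hölder's inequality followed by Young's convolution inequality then bounds this by $c_{d,\ai}R^{d/q-\ai}$ times a product of two Lebesgue norms of $\rho\i\de$; interpolating each of these between $L^1$ and $L^{(d+2)/d}$, with the exponents chosen admissibly (this is precisely where the range of $\ai$ enters, to keep the interpolation weights in $[0,1]$ and $q$ in $(\,\tfrac{d+2}{4},\tfrac{d}{\ai})$), yields a bound $CR^{d/q-\ai}\norm{\rho\i\de}_{L^1}^{a}\norm{\rho\i\de}_{L^{(d+2)/d}}^{b}$ with $a+b=2$. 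Adding the far and near contributions and optimising over $R>0$ (choosing $R$ so the two powers of $R$ balance) eliminates $R$ and produces a bound of exactly the asserted form with the exponents in the statement; this is, up to adapting to the present system, the argument of \cite[Lemma 2.3]{choi_jeong_kinetic}.

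A cleaner alternative is to avoid the truncation and invoke the Hardy--Littlewood--Sobolev inequality directly on $\intdd\abs{x-y}^{-\ai}\rho\i\de(x)\rho\i\de(y)\,dx\,dy$, bounding it by $C\norm{\rho\i\de}_{L^p}^2$ with $\tfrac2p=2-\tfrac{\ai}{d}$, and then interpolating $\norm{\rho\i\de}_{L^p}$ between $L^1$ and $L^{(d+2)/d}$. Either way, the only genuinely delicate point is the bookkeeping: one must check that the Lebesgue exponents produced by Young's (or HLS) and the interpolation are admissible for the relevant $\ai$, and then track the powers of $R$ through the optimisation to land on the precise exponent $\tfrac{5}{2d}\ai$; the remaining computations are routine.
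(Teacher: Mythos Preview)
Your proposal is correct and follows essentially the same route as the paper, which omits the proof and refers to \cite[Lemma 2.3]{choi_jeong_kinetic}; you recover exactly that argument (discard $\delta$, truncate the Riesz kernel at scale $R$, apply Young's inequality and $L^1$--$L^{(d+2)/d}$ interpolation, then optimise in $R$), and you also note the HLS shortcut. One small caveat on the bookkeeping: carrying out either computation in general dimension $d$ yields the exponent $\tfrac{d+2}{2d}\alpha_i$ on $\norm{\rho_i^\delta}_{L^{(d+2)/d}}$, which coincides with the stated $\tfrac{5}{2d}\alpha_i$ precisely when $d=3$ (the setting of the cited Vlasov--Manev paper); this does not affect the argument or its use in the paper, where only a $\delta$-uniform bound is needed.
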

%\begin{proof}
%We recall the classical Hardy-Littlewood-Sobolev inequality, that is
% \[ \abss{\intdd \mu (x) \abs{x-y}^{-\lambda} \nu (y)\,dx\,dy} \leq C_{p,\lambda,d} \norm{\mu}_{L^p} \norm{\nu}_{L^q}, \]
% for $\mu \in L^p(\R^d)$, $\nu \in L^q (\R^d)$, $1<p,q<\infty$, $1/p+1/q+\lambda /d=2$, and $0<\lambda <d$. 
% By $L^p$-interpolation we know that for $1\leq p,q \leq \gamma$,
% \[ \norm{\rho\i}_{L^p} \leq \norm{\rho\i}_{L^1}^{1-a} \norm{\rho\i}_{L^\gamma}^{a}, \qquad \frac{1}{p}=1-a+\frac{a}{\gamma}, \]
% and
% \[ \norm{\rho\i}_{L^q} \leq \norm{\rho\i}_{L^1}^{1-b} \norm{\rho\i}_{L^\gamma}^b, \qquad \frac{1}{q}=1-b+\frac{b}{\gamma}. \]
% Thus
% \[ \norm{\rho\i}_{L^p} \norm{\rho\i}_{L^q} \leq \norm{\rho\i}_{L^1}^{2-(a+b)} \norm{\rho\i}_{L^\gamma} ^{a+b}. \]
% If $1/p+1/q+\lambda/d=2$, then 
% \[ a+b= \frac{\gamma}{\gamma -1} \frac{\lambda}{d}. \]
% If we take $\gamma=\frac{d+2}{d}$ and $\lambda=\ai$, we obtain
% \begin{align*}
% \iint_{\Rdd} K\ii\de(x-y) \rho\i\de(x) \rho\i\de(y)\,dx\,dy \leq & \iint_{\Rdd} K\ii (x-y) \rho\i\de(x) \rho\i\de(y)\,dx\,dy \\
% \leq &\ C\i \norm{\rho\i\de}_{L^p} \norm{\rho\i\de}_{L^q} \\
% \leq &\ C\i \norm{\rho\i\de}_{L^1} ^{2-\frac{5}{2d}\ai} \norm{\rho\i\de}_{L^{\frac{d+2}{d}}}^{\frac{5}{2d}\ai} \\
% \leq &\ C\i \norm{\rho_{i0}}_{L^1} ^{2-\frac{5}{2d}\ai} \norm{\rho\i\de}_{L^{\frac{d+2}{d}}}^{\frac{5}{2d}\ai},
% \end{align*}
% with $C\i>0$ independent of $\delta$.
% \end{proof}

Next we prove a uniform in $\delta$ estimate on the second moments of the weak solution $\f\de$ to system \eqref{eq:singular_regular}.

\begin{prop}
\label{prop_sing_sec_mom}
Let $T>0$ and $\f\de$ be the weak solution to system \eqref{eq:singular_regular} on the interval $[0,T]$. Assume that
\[ \int_{\R^d} \rho_{i0} K\ii \ast \rho_{i0}\,dx < \infty. \] Then the following estimate on the second moment holds:
\[ \intdd \bigg( \frac{\abs{x}^2}{2} + \frac{\abs{v}^2}{2} \bigg) f\i\de \,dx\,dv + \frac{1}{\varepsilon} \int_0^t \intdd |v|^2f\i\de \,dx\,dv\,ds \leq C, \]
for all $t\in [0,T]$ and for some $C>0$ independent of $\delta$.
\end{prop}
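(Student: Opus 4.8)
The plan is to derive an energy–type identity for the functional
\[
\mathcal{E}\i\de(t) \coloneqq \intdd \bigg( \frac{\abs{x}^2}{2} + \frac{\abs{v}^2}{2} \bigg) f\i\de \,dx\,dv + \frac12 \intdd K\ii\de(x-y)\rho\i\de(x)\rho\i\de(y)\,dx\,dy + \sum_{j\neq i}\tfrac12\intdd K\ij(x-y)\rho\i\de(x)\rho\j\de(y)\,dx\,dy
\]
(or, more conservatively, just track the kinetic/position moments together with the interaction energy $\tfrac12\sum_j\iint K\ij\rho\i\de\rho\j\de$ treating symmetry appropriately), and show it obeys a Gr\"onwall inequality with a dissipation term $\frac1\varepsilon\iint|v|^2 f\i\de$. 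First I would test the regularised kinetic equation \eqref{eq:singular_regular} against $\tfrac12(|x|^2+|v|^2)$; integrating by parts, the transport term $v\cdot\nabla_x f\i\de$ paired with $\tfrac12|x|^2$ produces $\iint x\cdot v\, f\i\de$, the friction term $\frac1\varepsilon\nabla_v\cdot(vf\i\de)$ paired with $\tfrac12|v|^2$ gives $-\frac1\varepsilon\iint|v|^2 f\i\de$, and the force term gives $-\frac1\varepsilon\iint v\cdot(\sum_j\nabla K\ij\de\ast\rho\j\de)f\i\de$. Summing over $i$ and differentiating the interaction energy in time (using the continuity equation $\partial_t\rho\i\de + \nabla_x\cdot m\i\de = 0$ with $m\i\de = \int v f\i\de\,dv$, obtained by integrating \eqref{eq:singular_regular} in $v$), the cross terms $-\frac1\varepsilon\iint v\cdot(\nabla K\ij\de\ast\rho\j\de)f\i\de$ should cancel against the derivative of the interaction energy up to the symmetrisation of the cross-potentials — this is exactly where one must be slightly careful, and where the factor conventions matter.

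After the cancellation I would be left with an inequality of the form
\[
\frac{d}{dt}\mathcal{E}\i\de(t) + \frac1\varepsilon\intdd|v|^2 f\i\de\,dx\,dv \leq \intdd x\cdot v\, f\i\de\,dx\,dv \leq \intdd\frac{|x|^2+|v|^2}{2}f\i\de\,dx\,dv \leq \mathcal{E}\i\de(t) + (\text{interaction energy correction}),
\]
summed over $i$. The remaining obstacle is that the interaction energy is not a priori sign-definite — in particular the self-interaction potential $K\ii\de$ is positive and bounded by $K\ii$, so that term is fine and nonnegative, but if any cross-potentials $K\ij$ are attractive the corresponding energy could be negative. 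To control this I would invoke Lemma \ref{lem_poten_ii}, which bounds $\big|\iint K\ii\de\rho\i\de\rho\i\de\big|$ by $C\i\|\rho_{i0}\|_{L^1}^{2-\frac{5}{2d}\ai}\|\rho\i\de\|_{L^{(d+2)/d}}^{\frac{5}{2d}\ai}$, then use Lemma \ref{lem_rho_i} to interpolate $\|\rho\i\de\|_{L^{(d+2)/d}}$ in terms of $\|f\i\de\|_{L^\infty}$ (uniformly bounded on $[0,T]$ by the first lemma, though with an $\varepsilon$-dependent constant, which is allowed here) and $\iint|v|^2 f\i\de$. Since $\frac{5}{2d}\ai\cdot\frac{d}{d+2}<1$ precisely when $\ai<d-1$ up to the interpolation exponent bookkeeping (this is where the restriction $\ai\in(0,d-1]$ enters), Young's inequality lets me absorb a small multiple of $\iint|v|^2 f\i\de$ into the dissipation term on the left, leaving a constant depending on $\|\rho_{i0}\|_{L^1}$, $\varepsilon$, $T$ and $\|f\de_{i0}\|_{L^\infty}$; the smooth cross-potentials $K\ij$, $i\neq j$, contribute only bounded terms since $\|K\ij\|_{L^\infty}<\infty$ and masses are conserved.

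Finally I would integrate the resulting differential inequality
\[
\frac{d}{dt}\bigg(\sum_{i=1}^N \mathcal{E}\i\de(t)\bigg) + \frac{1}{2\varepsilon}\sum_{i=1}^N\intdd|v|^2 f\i\de\,dx\,dv \leq C\sum_{i=1}^N\mathcal{E}\i\de(t) + C
\]
via Gr\"onwall's lemma on $[0,T]$, using that $\mathcal{E}\i\de(0)$ is finite by the hypothesis $\int\rho_{i0}K\ii\ast\rho_{i0}\,dx<\infty$ (and finiteness of the initial second moments, which is part of the standing assumption that initial data lie in $\mathcal{P}_1$ with compact support in the regularised construction), together with the bound on $\mathcal{E}\i\de$ from below by $\iint\tfrac12(|x|^2+|v|^2)f\i\de$ minus a controlled correction. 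Since every constant produced along the way is independent of $\delta$ (Lemmas \ref{lem_rho_i}, \ref{lem_poten_ii} are stated with $\delta$-independent constants, and $\|f\de_{i0}\|_{L^\infty}$, $\|\rho_{i0}\|_{L^1}$ are fixed data), this yields the claimed uniform-in-$\delta$ bound. The main obstacle, as noted, is ensuring the force/interaction-energy cancellation is exact and that the $L^{(d+2)/d}$ norm of $\rho\i\de$ can be absorbed — i.e., that the exponent condition coming from Lemmas \ref{lem_rho_i}–\ref{lem_poten_ii} is compatible with the dissipation, which is exactly why $\alpha_i\le d-1$ is imposed.
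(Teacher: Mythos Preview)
Your overall strategy matches the paper's: test \eqref{eq:singular_regular} against $\tfrac12(|x|^2+|v|^2)$, recognize the self-interaction force term as the exact time derivative of $\tfrac12\iint K_{ii}^\delta\rho_i^\delta\rho_i^\delta$ via the continuity equation, and close by Gr\"onwall using Lemmas~\ref{lem_rho_i} and~\ref{lem_poten_ii}. The one genuine gap is in your handling of the cross-potentials.

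You include the cross-interaction energies $\tfrac12\iint K_{ij}\rho_i^\delta\rho_j^\delta$ in $\mathcal{E}_i^\delta$, hoping to cancel the cross-force terms against their time derivative, and you later assert these contributions are harmless because ``$\|K_{ij}\|_{L^\infty}<\infty$''. Neither step is justified: assumption \eqref{pot} gives only $\nabla K_{ij}\in W^{1,\infty}$, not $K_{ij}\in L^\infty$, and the cancellation you describe would require the symmetry $K_{ij}=K_{ji}$, which is not assumed. The paper sidesteps both problems by never forming a cross-interaction energy at all: it keeps only the self-interaction energy in the functional (with coefficient $\tfrac{1}{2\varepsilon}$, not $\tfrac12$---your $\mathcal{E}_i^\delta$ is missing this factor, which is what makes the self-interaction cancellation exact) and bounds the cross-force term directly,
\[
\frac{1}{\varepsilon}\left|\intdd(\nabla K_{ij}\ast\rho_j^\delta)\cdot v\,f_i^\delta\,dx\,dv\right|\le\frac{1}{\varepsilon}\|\nabla K_{ij}\|_{L^\infty}\intdd|v|\,f_i^\delta\,dx\,dv,
\]
which is then absorbed into the dissipation $\tfrac{1}{\varepsilon}\int_0^t\iint|v|^2 f_i^\delta$ plus a $\delta$-independent constant. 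This uses only \eqref{pot} and no symmetry. With this correction your argument goes through; your Young's-inequality absorption of the self-interaction term via Lemmas~\ref{lem_rho_i}--\ref{lem_poten_ii} is consistent with the paper (and note that since $K_{ii}^\delta>0$, one may alternatively simply drop the self-interaction energy at time $t$ from the left-hand side and bound only its initial value by the hypothesis $\int\rho_{i0}K_{ii}\ast\rho_{i0}<\infty$).
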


\begin{proof}
A direct computation gives that
\begin{align*}
&\frac{1}{2} \frac{d}{dt} \bigg( \intdd \abs{v}^2 f\i\de \,dx\,dv \bigg) \\ %= \frac{1}{2} \intdd \abs{v}^2 \partial_tf\i\de \,dx\,dv \\
%=& \frac{1}{2} \intdd \abs{v}^2 \bigg[ \frac{1}{\varepsilon} \nabla_v \cdot (vf\i\de) + \frac{1}{\varepsilon} ( \sum_{j=1}^N \nabla K\ij\de \ast \rho\j\de ) \cdot \nabla_v f\i\de - v\cdot \nabla_x f\i\de \bigg]\,dx\,dv \\
%=& - \frac{1}{\varepsilon} \intdd (\sum_{j=1}^N \nabla K\ij\de \ast \rho\j\de ) \cdot vf\i\de \,dx\,dv - \frac{1}{\varepsilon} \intdd |v|^2f\i\de \,dx\,dv \\
& \quad = -\frac{1}{\varepsilon} \sum_{\substack{j=1 \\ j\neq i}}^N  \intdd (\nabla K\ij\de\ast\rho\j\de ) \cdot vf\i\de \,dx\,dv   - \frac{1}{\varepsilon} \intdd (\nabla K\ii\de \ast\rho\i\de) \cdot vf\i\de \,dx\,dv \\ 
&\qquad - \frac{1}{\varepsilon} \intdd |v|^2f\i\de \,dx\,dv.
\end{align*}
For $i\neq j$, we have that $\abs{\nabla K\ij \ast \rho\j} \leq \norm{\nabla K\ij}_{L^\infty}$, thus
\[ \abss{ \intdd (\nabla K\ij\de\ast\rho\j\de ) \cdot vf\i\de\,dx\,dv} \leq \norm{\nabla K\ij\de}_{L^\infty} \intdd \abs{v} f\i\de\,dx\,dv. \]
If, instead, $i=j$, by using \eqref{eq:cons_mass} in Proposition \ref{prop:estimate_smooth} we get
\begin{align*}
\frac{d}{dt} \bigg( \frac{1}{2}  \intdd K\ii\de (x-y)\rho\i\de (x) \rho\i\de(y) \,dx\,dy \bigg) 
=& \intdd K\ii\de (x-y) \partial_t \rho\i\de (x) \rho\i\de (y) \,dx\,dy \\
%=& - \intdd K\ii\de (x-y) \nabla_x \cdot \langle vf\i\de \rangle (x) \rho\i\de (y)\,dx\,dy \\
%=& \intdd \nabla K\ii\de (x-y) \rho\i\de (y) \cdot \langle vf\i\de \rangle (x)\,dx\,dy \\
=& \intdd (\nabla K\ii\de \ast\rho\i\de) \cdot vf\i\de \,dx\,dv.
\end{align*}
Thus, we have
\begin{align*}
\frac{1}{2} \frac{d}{dt} \bigg( & \intdd \abs{v}^2 f\i\de \,dx\,dv \bigg) + \frac{1}{2\varepsilon} \frac{d}{dt} \bigg( \intdd K\ii\de (x-y) \rho\i\de (x) \rho\i\de (y) \,dx\,dy \bigg) \\
=& -\frac{1}{\varepsilon} \sum_{\substack{j=1 \\ j\neq i}}^N \intdd ( \nabla K\ij\ast\rho\j\de ) \cdot vf\i\de\,dx\,dv - \frac{1}{\varepsilon} \intdd |v|^2f\i\de\,dx\,dv.
\end{align*}
In the spatial variable, we have the following estimate for the second order moment
\begin{align*}
\frac{d}{dt} \bigg( & \intdd \frac{\abs{x}^2}{2} f\i\de \,dx\,dv \bigg) = \intdd \frac{\abs{x}^2}{2} \partial_t f\i\de \,dx\,dv \\
=& \intdd x\cdot vf\i\de\,dx\,dv \leq \intdd \bigg( \frac{\abs{x}^2}{2} + \frac{\abs{v}^2}{2} \bigg) f\i\de \,dx\,dv.
\end{align*}
Now, considering the estimates above, we obtain
\begin{align*}
&\intdd \bigg( \frac{\abs{x}^2}{2}+\frac{\abs{v}^2}{2} \bigg) f\i\de\,dx\,dv +  \frac{1}{2\e} \intdd K\ii\de (x-y)\rho\i\de (x)\rho\i\de (y)\,dx\,dy \\
& \qquad + \frac{1}{\varepsilon} \int_0^t \intdd \frac{1}{f\i\de} \abs{ vf\i\de}^2 \,dx\,dv\,ds \\
& \quad \leq  \intdd \bigg( \frac{\abs{x}^2}{2}+\frac{\abs{v}^2}{2} \bigg) f_{i0}\de \,dx\,dv + \frac{1}{2\e} \intdd K\ii\de (x-y)\rho_{i0}\de (x)\rho_{i0}\de (y)\,dx\,dy \\
& \qquad + \frac{1}{\varepsilon} \sum_{\substack{j=1 \\ j\neq i}}^N \int_0^t \intdd ( \nabla K\ij\de\ast\rho\j\de ) \cdot vf\i\de\,dx\,dv  + \int_0^t \intdd \bigg( \frac{\abs{x}^2}{2} + \frac{\abs{v}^2}{2} \bigg) f\i\de\,dx\,dv\,ds.
\end{align*}
By Lemmas \ref{lem_rho_i} and \ref{lem_poten_ii}, we know that
\[ \abss{ \intdd K\ii\de (x-y)\rho\i\de (x)\rho\i\de (y)\,dx\,dy } \leq C, \]
where $C$ is independent of $\delta$.
%\begin{align*}
%\abss{ \intsv K\ii\de (x-y)\rho\i\de (x)\rho\i\de (y)\,dx\,dy } & \leq CC\i \norm{\rho_{i0} }_{L^1}^{2-\frac{5}{2d}\ai} \norm{\rho\i\de} _{L^\frac{5}{3}} ^{\frac{5}{2d}\ai} \\
%& \leq CC\i \norm{\rho_{i0} }_{L^1}^{2-\frac{5}{2d}\ai} \bigg( \intsv \abs{v}^2 f\i\de \bigg) ^{\frac{5}{d+2}\frac{\ai}{2}},
%\end{align*}
%with $C$ positive and independent of $\delta$.
%Furthermore, by using Young's inequality (by Remark 3 di C-J), we obtain that for every $\xi >0$,
%\[ \abss{\intsv K\ii\de (x-y) \rho\i\de (x) \rho\i\de (y)\,dx\,dy} \leq \xi \bigg( \intsv \abs{v}^2 f\i\de \bigg) +C \]
%with $C$ independent of the solution. 
We get
\begin{align*}
\intdd & \bigg( \frac{\abs{v}^2}{2}+\frac{\abs{x}^2}{2} \bigg) f\i\de\,dx\,dv + \frac{1}{\varepsilon} \int_0^t \intdd \frac{1}{f\i\de} \abs{ vf\i\de }^2\,dx\,dv\,ds \\
\leq &\intdd \bigg( \frac{\abs{x}^2}{2}+\frac{\abs{v}^2}{2} \bigg) f_{i0}\de \,dx\,dv+ \ C \int_0^t \intdd ( \abs{v}^2 + \abs{x}^2 ) f\i\de\,dx\,dv\,ds +C
\end{align*}
for some $C>0$ independent of $\delta$. Then, by Gr\"onwall's lemma we obtain the result.
\end{proof}

\begin{rmk} From Proposition \ref{prop_sing_sec_mom}, we deduce the following estimates on total energy for $f\i\de$, as $\fori$, and for all $t \in [0,T]$:
\begin{align*}
\begin{aligned}
&\sum_{i=1}^N\intdd \frac{\abs{v}^2}2 f\de_i \,dx\,dv + \frac1{2\e} \sum_{i=1}^N \intr \rho\de_i K_{ii} *\rho\de_i\,dx + \frac{1}{\e}\int_0^t \sum_{i=1}^N\intdd \abs{v}^2 f\de_i\,dx\,dv\,ds \\
&\quad \leq \sum_{i=1}^N\intdd \frac{\abs{v}^2}2 f\de_{i0} \,dx\,dv + \frac1{2\e} \sum_{i=1}^N \intr \rho\de_{i0} K_{ii} *\rho\de_{i0}\,dx +\frac1{\e} \sum_{i \neq j}\norm{\nabla K_{ij}}_{L^\infty} t.
\end{aligned}
\end{align*}
\end{rmk}

%\begin{rmk} If $K_{ij} = K_{ji}$ for all $i,j=1,\dots,N$, we find
%\begin{align*}
%\begin{aligned}
%&\frac{d}{dt} \bigg(\sum_{i=1}^N\intdd \frac{\abs{v}^2}2 f\de_i \,dx\,dv + \frac1{2\e} \sum_{i=1}^N \intr \rho\de_i K_{ii} *\rho\de_i\,dx + \frac1{\e} \sum_{i>j} \intr \rho\de_i K_{ij} *\rho\de_j\,dx\bigg)\cr
%&\qquad = -\frac{1}{\e}\sum_{i=1}^N \intdd \abs{v}^2 f\de_i\,dx\,dv.
%\end{aligned}
%\end{align*}
%Moreover, if $K_{ij} \in L^\infty$ for $i\neq j$, then
%\begin{align*}
%\begin{aligned}
%&\sum_{i=1}^N\intdd \frac{\abs{v}^2}2 f\de_i \,dx\,dv + \frac1{2\e} \sum_{i=1}^N \intr \rho\de_i K_{ii} *\rho\de_i\,dx + \frac{1}{\e}\int_0^t \sum_{i=1}^N \intdd \abs{v}^2 f\de_i\,dx\,dv\,ds \cr
%&\quad \leq \sum_{i=1}^N\intdd \frac{\abs{v}^2}2 f\de_{i0} \,dx\,dv + \frac1{2\e} \sum_{i=1}^N \intr \rho\de_{i0} K_{ii} *\rho\de_{i0}\,dx  + \frac{2 }{\e}\sum_{i>j} \norm{K_{ij}}_{L^\infty}t.
%\end{aligned}
%\end{align*}
%In this case, if we define a free energy $\calF(\rho)$ as
%\[
%\calF(\rho) = \sum_{i,j=1}^N \intr \rho_i K_{ij} *\rho_j\,dx,
%\]
%then the limiting system \eqref{eq:singular_limit} has a gradient flow structure:
%\[
%\partial_t \rho_i = \nabla \cdot \bigg(\rho_i \nabla \frac{\delta \calF(\rho)}{\delta\rho_i}\bigg),
%\]
%as $\fori$.
%\end{rmk}

\subsubsection{Existence of weak solution to the kinetic system}
\label{sec:singular_existence}
Now, we prove the existence of weak solutions to system \eqref{eq:singular_main}. For this purpose, we need the following lemma, cf.\ \cite{glassey,karper_mellet_trivisa}.

\begin{lemma}
\label{lemma:rel_comp}
Let $\{ f^n\}_n$ be bounded in $L^p_{loc} ([0,T]\times\Rdd)$ with $1<p<\infty$, and $ \{ G^n\}_n$ be bounded in $L^p_{loc} ([0,T]\times\Rdd)$. Assume that $f^n$ and $G^n$ satisfy 
\[ \partial_t f^n + v \cdot \nabla_x f^n = \nabla_v G^n, \qquad f^n \mid_{t=0} = f_0 \in L^p (\Rdd), \]
and
\[ f^n \mbox{ is bounded in } L^\infty (\Rdd), \]
\[ (\abs{v}^2 + \abs{x}^2 ) f^n \mbox{ is bounded in } L^\infty ((0,T);L^1 (\Rdd)). \]
Then, for any $q < \frac{d+2}{d+1}$, the sequence
\[ \bigg\{ \int_{\R^d} f^n \,dv \bigg\}_n \]
is relatively compact in $L^q ((0,T)\times \R^d)$.
\end{lemma}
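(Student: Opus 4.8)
The plan is to apply a velocity-averaging lemma, which is the standard tool for obtaining strong compactness of the macroscopic densities $\int f^n\,dv$ from kinetic transport equations with no regularity in $x$. The statement of Lemma~\ref{lemma:rel_comp} is essentially the one found in the works of Lions--Perthame--Tadmor and in the form used for compressible/kinetic systems (cf.\ the references to \cite{glassey,karper_mellet_trivisa} given just before the lemma), so the proof amounts to checking that the hypotheses of such a classical averaging result are met and tracking the exponent $q<\tfrac{d+2}{d+1}$.

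Concretely, I would proceed as follows. First, observe that $f^n$ is bounded in $L^\infty_{t,x,v}$ and, since $(|v|^2+|x|^2)f^n$ is bounded in $L^\infty_t L^1_{x,v}$, an interpolation (Lemma~\ref{lem_rho_i}--type estimate, or simply splitting the $v$-integral on $\{|v|\le R\}$ and $\{|v|>R\}$ and optimizing in $R$) gives that $\rho^n:=\int_{\R^d} f^n\,dv$ is bounded in $L^\infty((0,T);L^p(\R^d))$ for every $p\in[1,\tfrac{d+2}{d}]$; likewise $\int_{\R^d} v f^n\,dv$ is bounded in $L^\infty((0,T);L^r(\R^d))$ for a suitable range of $r$. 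Similarly one controls $\int_{\R^d} G^n\,dv$ and $\int_{\R^d} v\otimes(\cdot)$ moments of $G^n$ using the $L^p_{loc}$ bound on $G^n$. Second, apply the velocity averaging lemma to the transport equation $\partial_t f^n+v\cdot\nabla_x f^n=\nabla_v G^n$: against a fixed test function $\psi(v)\in C^\infty_c(\R^d)$ one gets that $\int f^n\psi\,dv$ is relatively compact in $L^q_{loc}((0,T)\times\R^d)$ for $q$ in the averaging range, with the gain in regularity coming from the fact that the right-hand side is a $v$-derivative (so it is one degree more regular in the averaging sense). Third, remove the truncation in $v$: write $\rho^n=\int f^n\psi_R\,dv+\int f^n(1-\psi_R)\,dv$ where $\psi_R$ is a smooth cutoff equal to $1$ on $B_R$; the first term is compact by the previous step, and the tail is small uniformly in $n$ in $L^q$ by the uniform $L^\infty_t L^1$ bound on $|v|^2 f^n$ (the factor $|v|^{-2}$ on $\{|v|>R\}$ provides the smallness), so $\rho^n$ is itself relatively compact in $L^q_{loc}$. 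Finally, the global (not merely local) $L^q$ compactness on $(0,T)\times\R^d$ follows from the uniform decay of $\int_{|x|>\rho}\rho^n\,dx$, again coming from the $|x|^2 f^n$ moment bound, which upgrades local compactness to compactness on the whole space.

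The bookkeeping that fixes the precise threshold $q<\tfrac{d+2}{d+1}$ is the interpolation between the $L^\infty_{t,x,v}$ bound and the second-moment bound: it yields $\rho^n,\ \int v f^n\,dv\in L^\infty_t L^{(d+2)/d}_x$ and analogous fractional integrability for $G^n$-moments, and feeding these integrability exponents into the averaging lemma produces exactly the exponent $(d+2)/(d+1)$ (one loses one power relative to $(d+2)/d$ because averaging with an $L^p$, rather than bounded, source only gives compactness strictly below the natural exponent). I would therefore cite the averaging lemma in the form of \cite{karper_mellet_trivisa} (or the classical Golse--Lions--Perthame--Sentis / Lions--Perthame--Tadmor statements) and just verify the moment/integrability hypotheses, rather than reproving averaging.

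\textbf{Main obstacle.} The delicate point is not the abstract averaging step but handling the large-velocity tails together with the fact that the source $G^n$ is only in $L^p_{loc}$, not bounded: one must check that moments of $G^n$ in $v$ (which is what actually enters the averaged equation after integrating in $v$ against $\psi$ and $v\psi$) are controlled, and that the truncation error in $v$ is genuinely uniform in $n$ in the $L^q$ norm for the claimed range of $q$. This is where the precise exponent arithmetic must be done carefully; everything else is a routine application of a known velocity-averaging result combined with the uniform bounds already established in Proposition~\ref{prop_sing_sec_mom} and the preceding $L^p$ lemma.
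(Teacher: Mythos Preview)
Your proposal is correct and aligns with the paper's treatment: the paper does not prove this lemma at all but simply states it with a ``cf.\ \cite{glassey,karper_mellet_trivisa}'' and uses it as a black box. Your outline---velocity averaging for the truncated moments, removal of the $v$-truncation via the second-moment bound, and upgrading to global $L^q$ compactness via the $|x|^2$-moment---is exactly the content of the cited references, so you are supplying more detail than the paper itself does.
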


The existence result of weak solutions to system \eqref{eq:singular_main} is contained in the following theorem.

\begin{theorem} 
\label{th:existence_kin_singular}
Assume that the initial datum $\f_0$ satisfies 
\[ f_{i0} \in L^1_+ \cap L^\infty (\Rdd), \qquad (\abs{x}^2 + \abs{v}^2) f_{i0} \in L^1 (\Rdd), \]
and
\[ (K\ii\ast \rho_{i0})f_{i0} \in L^1 (\Rdd). \]
Then there exists a weak solution $\f$ to \eqref{eq:singular_main} such that
\[ \f\in \mc([0,T]; \mathcal{P}(\Rdd)^N). \]
\end{theorem}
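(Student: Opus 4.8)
The strategy is a compactness/stability argument built on the regularised system \eqref{eq:singular_regular}: since for each fixed $\delta>0$ the kernels $\nabla K\ij\de$ are bounded and Lipschitz, Theorem \ref{th:existence} (together with Theorem \ref{th:smoothexistence}) provides a global weak solution $\f\de=(f\de_1,\dots,f\de_N)$ to \eqref{eq:singular_regular} with the prescribed (mollified) initial data, and the uniform-in-$\delta$ estimates established above — the $L^p$ bounds of the first lemma, the second-moment and kinetic-energy bound of Proposition \ref{prop_sing_sec_mom}, and the interaction-energy control of Lemma \ref{lem_poten_ii} via Lemma \ref{lem_rho_i} — give exactly the hypotheses needed to pass to the limit $\delta\to 0$. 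I would first fix $\delta$ and approximate the initial datum $\f_0$ by a sequence in $\mc^1\cap L^1$ with compact support (as in Section \ref{subsect:estimate_measure_valued}) so that the solutions are genuine, then record that all constants in the $\delta$-uniform estimates depend only on $\varepsilon$, $T$, $\|f_{i0}\|_{L^1\cap L^\infty}$, $\||x|^2f_{i0}\|_{L^1}$, $\||v|^2f_{i0}\|_{L^1}$ and $\int\rho_{i0}K\ii*\rho_{i0}\,dx$, hence are finite under the stated assumptions and do not blow up as $\delta\downarrow 0$.

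Second, I would extract convergent subsequences. Writing \eqref{eq:singular_regular} in the form $\partial_t f\i\de+v\cdot\nabla_x f\i\de=\nabla_v G\i\de$ with
\[
G\i\de=\frac1\varepsilon\Big(v+\sum_{j=1}^N\nabla K\ij\de\ast\rho\j\de\Big)f\i\de,
\]
the uniform $L^\infty$ bound on $f\i\de$, the uniform $L^\infty_t L^1$ bound on $(|x|^2+|v|^2)f\i\de$, and Lemma \ref{lemma:rel_comp} yield that $\{\rho\i\de=\int f\i\de\,dv\}_\delta$ is relatively compact in $L^q((0,T)\times\R^d)$ for every $q<\tfrac{d+2}{d+1}$; after passing to a subsequence we get $\rho\i\de\to\rho\i$ strongly in $L^q_{loc}$ and a.e., while $f\i\de\xrightharpoonup{\ast}f\i$ weakly-$*$ in $L^\infty((0,T)\times\Rdd)$ and $vf\i\de\rightharpoonup vf\i$ weakly. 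The crucial passage is then the nonlinear force term $\nabla K\ij\de\ast\rho\j\de$: for $i\neq j$ this is harmless since $\nabla K\ij\de=\nabla K\ij$ is fixed and bounded Lipschitz, so $\nabla K\ij*\rho\j\de\to\nabla K\ij*\rho\j$ converges strongly against the weak convergence of $f\i\de$; for $i=j$ one writes $\nabla K\ii\de=\nabla(C\i/(|x|^{\ai}+\delta))$ and must show $\nabla K\ii\de\ast\rho\i\de\to\nabla K\ii\ast\rho\i$ in a space that pairs with the weak-$*$ limit of $f\i\de$, splitting $\nabla K\ii\de\ast\rho\i\de-\nabla K\ii\ast\rho\i=(\nabla K\ii\de-\nabla K\ii)\ast\rho\i\de+\nabla K\ii\ast(\rho\i\de-\rho\i)$ and controlling the first piece through the Hardy–Littlewood–Sobolev inequality using the uniform $L^{(d+2)/d}$ bound on $\rho\i\de$ coming from Lemma \ref{lem_rho_i} and Proposition \ref{prop_sing_sec_mom}, and the second piece through the strong $L^q_{loc}$ convergence of $\rho\i\de$ together with the integrability of $\nabla K\ii$ near and away from the origin (here the restriction $\ai\le d-1$, equivalently $|\nabla K\ii|\lesssim|x|^{-\ai-1}$ with $\ai+1\le d$, makes $\nabla K\ii$ locally integrable and the convolution estimate work).

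Third, with these convergences in hand I would pass to the limit in the distributional formulation of \eqref{eq:singular_regular} tested against $\phi\i\in\mc_c^\infty([0,T)\times\Rdd)$: the linear terms pass by weak-$*$ / weak convergence, and the quadratic interaction term $\iint\nabla_v\phi\i\cdot(\nabla K\ij\de\ast\rho\j\de)f\i\de$ passes by the product of a strongly convergent factor and a weakly-$*$ convergent factor, yielding that $\f$ is a weak solution of \eqref{eq:singular_main}. Finally, non-negativity of $f\i$ and conservation of mass are inherited from $\f\de$ (mass is conserved by \eqref{eq:singular_regular} and the mollified data are probability measures), and the second-moment bound passes to the limit by weak lower semicontinuity, so $f\i(t,\cdot,\cdot)\in\mathcal P(\Rdd)$; narrow continuity in $t$, i.e.\ $\f\in\mc([0,T];\mathcal P(\Rdd)^N)$, follows from the uniform $L^\infty_tL^1$ moment bound and the equation, which gives equicontinuity of $t\mapsto\iint\phi\i f\i\de$ uniformly in $\delta$. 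The main obstacle is precisely the treatment of the singular self-interaction term $\nabla K\ii\de\ast\rho\i\de$ times $f\i\de$ in the limit — ensuring the force has just enough integrability (this is where $\ai\le d-1$ enters, beyond the $\ai<d$ needed merely to make $K\ii$ locally integrable) to combine the strong convergence of $\rho\i\de$ with the weak-$*$ convergence of $f\i\de$ without a concentration defect.
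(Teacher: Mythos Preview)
Your proposal is correct and follows essentially the same route as the paper: uniform-in-$\delta$ $L^p$ and moment bounds, the velocity-averaging compactness of Lemma~\ref{lemma:rel_comp} to upgrade $\rho_i^\delta$ to strong $L^q$ convergence for $q<\tfrac{d+2}{d+1}$, and then passage to the limit in the nonlinear term via a splitting controlled by Hardy--Littlewood--Sobolev. The only cosmetic difference is that the paper splits the full product $(\nabla K_{ii}^\delta\ast\rho_i^\delta)f_i^\delta-(\nabla K_{ii}\ast\rho_i)f_i$ into three pieces after introducing $\rho_{i,\Psi}^\delta=\int f_i^\delta\Psi\,dv$ (which lies in every $L^p$ thanks to the compact support of $\Psi$), whereas you split the force into two pieces and treat the $f_i^\delta\rightharpoonup^* f_i$ part separately; both decompositions require exactly the same ingredients and the same use of $\alpha_i\le d-1$.
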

\begin{proof}
By the uniform in $\delta$ bound estimates obtained above we know
\[ \norm{f\i\de}_{L^\infty ((0,T); L^p(\Rdd))} + \norm{\rho\i\de}_{L^\infty ((0,T); L^q(\R^d))} \leq C, \]
with $p\in [1+\infty ]$, $q\in [1, \frac{d+2}{d}]$, $C>0$ independent of $\delta$. Therefore, by compactness theory, we have that as $\delta \to 0$, up to a subsequence,
\begin{gather*}
f\i\de \xrightharpoonup{\ast} f\i \quad \mbox{in} \quad L^\infty ((0,T); L^p(\Rdd)), \quad p\in [1,+\infty], \\
\rho\i\de \xrightharpoonup{\ast} \rho\i \quad \mbox{in} \quad L^\infty ((0,T); L^p(\R^d)), \quad p\in [1,\tfrac{d+2}{d}].
\end{gather*}
Set 
\[ G\i\de \coloneqq \frac{1}{\varepsilon} vf\i\de + \frac{1}{\varepsilon} \sum_{j=1}^N (\nabla K\ij\de\ast \rho\j\de )f\i\de. \]
We want to prove that $G\i\de \in L^p_{loc} ([0,T]\times\Rdd)$ for some $p\in (1,\infty)$, in order to apply Lemma \ref{lemma:rel_comp}.
We need to check the self-interaction terms.
Let $q<2$. Then
\begin{align*}
\intdd \abs{vf\i\de}^q \,dx\,dv = & \intdd \bigg(|v|^2f\i\de  \bigg)^\frac{q}{2} (f\i\de)^\frac{q}{2} \,dx\,dv\leq  \bigg( \intdd |v|^2f\i\de  \,dx\,dv \bigg)^\frac{q}{2} \norm{f\i\de}^\frac{q}{2}_{L^\frac{q}{2-q}}.
\end{align*}
For the second term, by using Young's inequality for convolution, we obtain that
\[ \norm{(\nabla K\ii\de \ast\rho\i\de) f\i\de }_{L^p} \leq C \norm{f\i\de}_{L^\infty} \norm{ \nabla K\ii \ast \rho\i\de}_{L^p} \leq C \norm{f\i\de}_{L^\infty} \norm{\rho\i\de}_{L^p}, \]
for $p < \frac{d+2}{d}$. Thus, by Lemma \ref{lemma:rel_comp}, we get
\[ \rho\i\de \to \rho\i \mbox{ in } L^q ((0,T)\times\R^d) \mbox{ and a.e.}, \]
up to a subsequence, as $\delta \to 0$, for $q < \frac{d+2}{d+1}$.
Now we want to prove that
\[ (\nabla K\ii\de \ast\rho\i\de) f\i\de \to (\nabla K\ii\ast\rho\i) f\i, \]
in the sense of distributions.
Let $\Psi\i \in \mc_c^\infty ([0,T]\times \Rdd)$.
\begin{align*}
\int_0^T &\intdd [  (\nabla K\ii\de \ast\rho\i\de )f\i\de -(\nabla K\ii\ast\rho\i)f\i] \Psi\i\,dx\,dv\,ds \\
=& \int_0^T \int_{\R^d} (\nabla (K\ii\de - K\ii)\ast\rho\i) \rho_{i,\Psi} \,dx\,ds  + \int_0^T \int_{\R^d} \nabla K\ii\de \ast (\rho\i\de-\rho\i) \rho\de_{i,\Psi} \,dx\,ds \\
&\quad+ \int_0^T \int_{\R^d} (\nabla K\ii\de\ast\rho\i) (\rho\de_{i,\Psi}-\rho_{i,\Psi} ) \,dx\,ds \\
\eqqcolon & I+II+III,
\end{align*}
with $\rho_{i,\Psi} \coloneqq \int_{\R^d} f\i \Psi\, dv$ and $\rho\de_{i,\Psi} \coloneqq \int_{\R^d} f\i\de \Psi \,dv$.
Thanks to the uniform in $\delta$ estimate for $f\i\de$ in $L^\infty ((0,T)\times\Rdd)$ and the compact support of $\Psi\i$, we find
\[ \rho_{i,\Psi}, \ \rho\de_{i,\Psi} \in L^p((0,T); L^q (\R^d)), \]
for any $p,q \in [1,\infty]$, uniformly in $\delta$.

\noindent\textbf{Estimate of $I$.} We have that $\abs{(\nabla K\ii\de \ast\rho\i)\rho_{i,\Psi}} \leq \abs{\nabla K\ii \ast \rho\i} \abs{\rho_{i,\Psi}}$ and $(\nabla K\ii\de \ast\rho\i) \rho_{i,\Psi}$ converges pointwise to $(\nabla K\ii\ast\rho\i)\rho_{i,\Psi}$ as $\delta \to 0$. Moreover, by Hardy-Littlewood-Sobolev inequality, we get
\begin{align*}
\int_0^T \int_{\R^d} (\abs{\nabla K\ii} \ast \rho\i) \abs{\rho_{i,\Psi}} \,dx\,ds & \leq \int_0^T \intdd \rho\i(x) \abs{x-y}^{-(\ai+1)} \abs{\rho_{i,\Psi}}(y) \,dx\,dy\,ds \\
& \leq C \norm{\rho\i}_{L^{p_i}(\R^d \times (0,T))} \norm{\rho_{i,\Psi}}_{L^{{p_i}'} ((0,T); L^{q_i} (\R^d))},
\end{align*}
where 
\begin{equation}\label{index_pi}
p_i\in \left(1,\frac{d+2}{d}\right), \quad \frac{\ai+1}{d}=1-\frac{1}{q_i}+\frac{1}{p_i}, 
\end{equation}
and $p_i'$ is the H\"older conjugate of $p_i$. Therefore, by Lebesgue's dominated convergence theorem, we obtain that $I$ vanishes as $\delta\to 0$. 

\noindent\textbf{Estimate of $II$.} As in the previous estimate, we have that
\begin{align*}
\abss{ \int_0^T \int_{\R^d} \nabla K\ii \ast (\rho\i\de -\rho\i) \rho_{i,\Psi}\de  \,dx\,ds}  & \leq \int_0^T \intdd \abs{\rho\i\de - \rho\i}(x) \abs{x-y}^{-(\ai+1)} \abs{\rho\de_{i,\Psi}} (y)\,dx\,dy\,ds \\
& \leq C \norm{\rho\i\de - \rho\i} _{L^{p_i} (\R^d \times (0,T))} \norm{\rho_{i\Psi}}_{L^{{p_i}'} ((0,T); L^{q_i} (\R^d))},
\end{align*}
with $p_i\in (1,\frac{d+2}{d+1})$ and $q_i$ is chosen as in \eqref{index_pi}. Thus, $II\to 0$ as $\delta \to 0$.

\noindent\textbf{Estimate of $III$.} As said, we know that
\[ (\nabla K\de\ii \ast \rho\i ) \Psi \in L^1 ((0,T); L^q (\R^d)), \]
with $q<2$ uniformly in $\delta$. Then, since $f\i\de \xrightharpoonup{\ast} f\i$, we obtain that $III\to 0$ as $\delta \to 0$. 
We conclude that $\f$ is a weak solution to system \eqref{eq:singular_main}.
\end{proof}

\subsection{Existence and uniqueness of regular solutions to the macroscopic system}
Our aim here is to prove that, under suitable assumptions on the parameter, there exists a unique regular solution to system \eqref{eq:singular_limit}. We only focus on the case $-2 \leq \ai - d\leq 0$ for all $\fori$. Note that the case $\alpha_i \leq d - 2$ for all $i=1,\dots, N$ can be handled by the classical well-posedness theory. Moreover, the case $\alpha_i \leq d - 2$ for some $i$ can be taken into account by a simple modification of our arguments.

Here, we assume that the cross-interaction kernels $K_{ij}$, $i \neq j$, satisfy
\[
\nabla K_{ij} \in W^{1,1}(\R^d).
\]

Let us first recall Moser-type inequalities.
\begin{lemma}\label{lem_moser}
\begin{itemize}
    \item[(i)] Let $s>0$, $r \in (1,\infty)$, and $p_1, p_2, q_1, q_2 \in (1,\infty]$. Then we have
    \[
    \|\Lambda^s(fg)\|_{L^r} \lesssim \|\Lambda^s f\|_{L^{p_1}}\|g\|_{L^{q_1}} + \|f\|_{L^{p_2}}\|\Lambda^s g\|_{L^{q_2}}
    \]
    for $f \in \dot{W}^{s,p_1} \cap L^{p_2}$ and $g \in \dot{W}^{s,q_2} \cap L^{q_1}$, where $\frac1r = \frac1{p_1} + \frac1{q_1} = \frac1{p_2} + \frac1{q_2}$.
    \item[(ii)] Let $s >0$. If $f \in \dot{W}^{1,\infty} \cap \dot{H}^s$ and $g \in L^\infty \cap \dot{H}^{s-1}$, then we obtain
    \[
    \| [\Lambda^s, f]g\|_{L^2} \lesssim \|f\|_{\dot{H}^s}\|g\|_{L^\infty} + \|\nabla f\|_{L^\infty}\|g\|_{\dot{H}^{s-1}}.
    \]
\end{itemize}
Here $[\cdot, \cdot]$ denotes the commutator operator, i.e., $[A,B] = AB - BA$.
\end{lemma}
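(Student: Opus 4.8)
Both estimates belong to the Kato--Ponce / Coifman--Meyer fractional Leibniz and commutator calculus, and the natural route is a Littlewood--Paley analysis together with Bony's paraproduct decomposition. Fix a dyadic partition of unity, write $\Delta_j$ for the frequency-$2^j$ projection and $S_j=\sum_{k\le j-1}\Delta_k$ for the associated low-frequency truncation. One uses repeatedly (a) Bernstein's inequalities, in particular $\norm{\Lambda^s\Delta_j h}_{L^a}\sim 2^{js}\norm{\Delta_j h}_{L^a}$ and $\norm{\Delta_j h}_{L^b}\lesssim 2^{jd(1/a-1/b)}\norm{\Delta_j h}_{L^a}$ for $a\le b$; and (b) the square-function characterisation of $L^r$ for $r\in(1,\infty)$, which allows one to pass from $\ell^2$-sums of dyadic blocks to the $L^r$ norm. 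The hypotheses $r\in(1,\infty)$ and $p_i,q_i\in(1,\infty]$ with the stated H\"older relations are exactly what makes these tools applicable (the endpoints $p_i$ or $q_i=\infty$ requiring the usual minor care).

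For part (i) I would write the Bony decomposition $fg=T_fg+T_gf+R(f,g)$, with $T_fg=\sum_j S_{j-1}f\,\Delta_jg$ and $R(f,g)=\sum_{|j-k|\le1}\Delta_jf\,\Delta_kg$, and apply $\Lambda^s$ to each piece. In $T_gf$ the factor $f$ carries the high frequency, so $\Delta_q\Lambda^s(T_gf)$ is frequency-localised near $2^q$ and $\norm{\Delta_q\Lambda^s(T_gf)}_{L^r}\lesssim 2^{qs}\norm{S_{q-1}g}_{L^{q_1}}\norm{\Delta_qf}_{L^{p_1}}\lesssim \norm{g}_{L^{q_1}}\norm{\Lambda^s\Delta_qf}_{L^{p_1}}$; summing the $\ell^2$-norm in $q$ gives the term $\norm{\Lambda^sf}_{L^{p_1}}\norm{g}_{L^{q_1}}$. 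The paraproduct $T_fg$ is symmetric and yields $\norm{f}_{L^{p_2}}\norm{\Lambda^sg}_{L^{q_2}}$. In the remainder $R(f,g)$ the two frequencies are comparable, so $\Lambda^s$ may be placed on either factor; distributing the weight $2^{qs}$ onto, say, $f$, using Bernstein to absorb the low-output-frequency loss, and summing the resulting series, which converges because $s>0$, produces a multiple of $\norm{\Lambda^sf}_{L^{p_1}}\norm{g}_{L^{q_1}}$. Collecting the three contributions gives (i).

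For part (ii) I would write $[\Lambda^s,f]g=\Lambda^s(fg)-f\Lambda^sg$ and Bony-decompose both products. Every piece in which $f$ carries the high frequency, namely $\Lambda^s(T_gf)$, $\Lambda^sR(f,g)$, together with $T_{\Lambda^sg}f$ and $R(f,\Lambda^sg)$, is treated as in (i): $g$ (resp.\ $\Lambda^sg$ truncated at low frequency) enters only through $\norm{\Delta_kg}_{L^\infty}\le\norm{g}_{L^\infty}$, and $s>0$ makes the relevant geometric sums converge, so each such term is $\lesssim\norm{f}_{\dot{H}^s}\norm{g}_{L^\infty}$. The only genuinely delicate term is $\Lambda^s(T_fg)-T_f(\Lambda^sg)=\sum_q[\Lambda^s,S_{q-1}f]\Delta_qg$, a sum of commutators of the Fourier multiplier $\Lambda^s$ with the smooth, slowly varying function $S_{q-1}f$. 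Here one expands $\Lambda^s$ via its kernel (equivalently, its symbol to first order) and uses that $S_{q-1}f$ varies on scale $\gtrsim2^{-q}$ to gain exactly one derivative, $\norm{[\Lambda^s,S_{q-1}f]\Delta_qg}_{L^2}\lesssim\norm{\nabla S_{q-1}f}_{L^\infty}\,2^{q(s-1)}\norm{\Delta_qg}_{L^2}\lesssim\norm{\nabla f}_{L^\infty}\,2^{q(s-1)}\norm{\Delta_qg}_{L^2}$; summing in $q$ and applying Cauchy--Schwarz yields $\lesssim\norm{\nabla f}_{L^\infty}\norm{g}_{\dot{H}^{s-1}}$, the remaining term in the claimed bound.

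The main obstacle is precisely this last step: establishing the one-derivative gain in the commutator $[\Lambda^s,S_{q-1}f]$ acting on a frequency-$2^q$ block; everything else reduces to routine bookkeeping of dyadic sums, and in part (i) the only care needed is tracking the endpoint exponents through Bernstein and the square function. In the write-up I would simply invoke the classical references for these facts rather than reproduce the arguments in full.
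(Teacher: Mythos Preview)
The paper does not prove this lemma; it is simply introduced with the sentence ``Let us first recall Moser-type inequalities'' and stated without proof or reference, to be used as a tool in the a~priori estimate of Lemma~\ref{lem_apriori}. Your sketch via Littlewood--Paley blocks and Bony's paraproduct decomposition is the standard route to both the fractional Leibniz rule (i) and the Kato--Ponce commutator estimate (ii), and the outline you give --- handling $T_gf$, $T_fg$, $R(f,g)$ separately for (i), and isolating the genuine commutator piece $\sum_q[\Lambda^s,S_{q-1}f]\Delta_qg$ for (ii) with the one-derivative gain --- is correct. As you yourself note at the end, in practice one cites the classical references (Kato--Ponce, Coifman--Meyer, or the treatments in Grafakos or Bahouri--Chemin--Danchin) rather than reproducing this argument, and that is exactly what the paper does implicitly.
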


We then present a priori estimate of solutions of \eqref{eq:singular_limit} in the lemma below.
\begin{lemma}\label{lem_apriori} Let $d\geq 1$, $s> \frac{d}{2}+3$, and $-2 \leq \ai - d\leq 0$ for $\fori$. Let $\ro$ be a sufficiently smooth solution to \eqref{eq:singular_limit} decaying fast at infinity on a time interval $[0,T]$. Then by choosing a sufficiently small $T^* > 0$ depending on $\ro_0$, we have
    \[
    \|\ro(t)\|_{H^s} \leq 4\|\ro_0\|_{H^s}.
    \]
\end{lemma}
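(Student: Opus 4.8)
The plan is to establish the a priori bound by a standard energy method in $H^s$, controlling $\tfrac{d}{dt}\|\ro\|_{H^s}^2$ and then closing a continuation argument. First I would fix $i$ and apply $\Lambda^s$ to the equation $\partial_t \rho_i = \nabla\cdot(\rho_i u_i)$ with $u_i = \sum_j \nabla K_{ij}\ast\rho_j$, multiply by $\Lambda^s \rho_i$, and integrate over $\R^d$. This gives
\[
\frac12\frac{d}{dt}\|\rho_i\|_{\dot H^s}^2 = \int_{\R^d} \Lambda^s\nabla\cdot(\rho_i u_i)\,\Lambda^s\rho_i\,dx,
\]
and the task is to bound the right-hand side by $P(\|\ro\|_{H^s})$ for a polynomial $P$. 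The key structural point, exactly as in the one-species treatment of \cite{choi_jeong_fractional}, is that the self-interaction term $u_i^{\mathrm{self}} = \nabla K_{ii}\ast\rho_i = c\,\nabla\Lambda^{\alpha_i-d}\rho_i$ produces, after integration by parts and use of the commutator, a term of the form $-\int \rho_i |\Lambda^{1+(\alpha_i-d)/2}\Lambda^s\rho_i|^2\,dx$ (up to harmless lower-order pieces); since $\rho_i>0$ and $1+(\alpha_i-d)/2\ge 0$ when $\alpha_i\ge d-2$, this term has a good sign and can simply be discarded, rather than needing to be absorbed. The remaining self-interaction contributions are handled via the Moser/commutator estimates in Lemma \ref{lem_moser}: writing $\Lambda^s(\rho_i u_i^{\mathrm{self}}) = [\Lambda^s,\rho_i]u_i^{\mathrm{self}} + \rho_i\Lambda^s u_i^{\mathrm{self}}$ and noting $u_i^{\mathrm{self}} = c\nabla\Lambda^{\alpha_i-d}\rho_i$ gains derivatives (since $\alpha_i-d\le 0$), one bounds everything by $\|\rho_i\|_{H^s}^3$ using the Sobolev embedding $H^s\hookrightarrow W^{1,\infty}$ valid because $s>d/2+1$.

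For the cross-interaction terms, i.e.\ $\nabla\cdot(\rho_i\,\nabla K_{ij}\ast\rho_j)$ with $i\neq j$, the kernel contributes a bounded smooth force: under the assumption $\nabla K_{ij}\in W^{1,1}(\R^d)$ together with the smoothness hypothesis $K_{ij}\in C^2$, Young's inequality gives $\|\nabla K_{ij}\ast\rho_j\|_{H^{s+1}} \lesssim \|\nabla K_{ij}\|_{W^{1,1}}\|\rho_j\|_{H^s}$, so this term is controlled by $\|\rho_i\|_{H^s}\|\rho_j\|_{H^s}$ times lower-order factors, again via Moser. Summing over $i$ and $j$ yields a closed differential inequality
\[
\frac{d}{dt}\sum_{i=1}^N\|\rho_i(t)\|_{H^s}^2 \leq C\Big(\sum_{i=1}^N\|\rho_i(t)\|_{H^s}^2\Big)^{3/2},
\]
for a constant $C$ depending only on $N$, $s$, the kernels, and dimension. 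Integrating this Bernoulli-type inequality, there is $T^*>0$ depending only on $\|\ro_0\|_{H^s}$ (and the fixed data) such that $\|\ro(t)\|_{H^s}\le 4\|\ro_0\|_{H^s}$ for $t\in[0,T^*]$; concretely one checks that the blow-up time of the comparison ODE $y' = Cy^{3/2}$ with $y(0)=\|\ro_0\|_{H^s}^2$ stays above the time at which $y$ first reaches $16\|\ro_0\|_{H^s}^2$, and takes $T^*$ slightly below that threshold time.

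I expect the main obstacle to be the careful bookkeeping for the self-interaction term in the regime $d-2\le\alpha_i<d$, in particular making precise the commutator manipulation that isolates the negative-definite (or nonnegative-sign) principal part and showing that all the leftover terms genuinely lie at the level of $\|\rho_i\|_{H^s}^3$ and not higher. This requires $s$ large enough (hence the hypothesis $s>d/2+3$, giving room of three extra derivatives to absorb the two derivatives of $K_{ii}\ast\rho_i$ plus the divergence) and a correct use of part (ii) of Lemma \ref{lem_moser} with the fractional power $\Lambda^{1+(\alpha_i-d)/2}$; the borderline case $\alpha_i=d-2$, where the good term is just $\int\rho_i|\Lambda^s\rho_i\,\text{(derivative)}|^2$ with zero fractional gain, and the borderline $\alpha_i=d$ (formally $K_{ii}\ast\rho_i\propto\rho_i$) should both be checked to make sure no logarithmic loss appears. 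Everything else—the cross terms and the ODE comparison—is routine.
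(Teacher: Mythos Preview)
Your proposal is correct and follows essentially the same approach as the paper: energy estimate on $\|\Lambda^s\rho_i\|_{L^2}^2$, the self-interaction term handled via the commutator/Moser machinery exactly as in \cite{choi_jeong_fractional}, the cross terms controlled using $\nabla K_{ij}\in W^{1,1}$ and Young's inequality to get $\|\nabla^2 V_i[\ro]\|_{L^\infty}+\|\nabla^2 V_i[\ro]\|_{H^s}\lesssim\|\ro\|_{H^s}$, and then the closed inequality $\tfrac{d}{dt}\|\ro\|_{H^s}\le C\|\ro\|_{H^s}^2$ (equivalently your $y'\le Cy^{3/2}$) finished by ODE comparison. One small remark: the paper (and \cite{choi_jeong_fractional}) simply bounds the self-interaction contribution in absolute value by $C\|\rho_i\|_{H^s}^3$ rather than isolating a sign-definite principal part, so your ``good sign'' observation---while morally correct in the repulsive setting---is not actually needed here and in particular you need not assume $\rho_i>0$, which is not among the hypotheses of the lemma.
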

\begin{proof} We first notice that our main system can be rewritten as
    \[
    \begin{dcases}
    \partial_t \rho\i + \nabla \cdot (\rho\i u\i)=0,\\
    u\i= - \Lambda ^{\alpha_i - d} \nabla \rho\i - \nabla V\i[\ro],
    \end{dcases}
    \]
for $\fori$, with 
\[
V\i[\ro] = \sum_{j\neq i} K\ij \ast \rho\j\, .
\]
Then for $s> \frac{d}{2}+3$, we find
\begin{align*}
 \frac12\frac{d}{dt}\|\Lambda^s \rho_i\|_{L^2}^2 &= \intr \Lambda^s \rho_i \cdot \Lambda^s (\nabla \cdot (\rho_i \Lambda ^{\alpha_i - d} \nabla \rho_i))\,dx + \intr \Lambda^s \rho_i \cdot \Lambda^s (\nabla \cdot (\rho_i \nabla V_i[\ro])\,dx\cr
 &=: I + II,
\end{align*}
where $I$ can be estimated as
\[
|I| \leq C\|\rho_i\|_{H^s}^3 \leq C\|\ro\|_{H^s}^3.
\]
due to a direct consequence of \cite{choi_jeong_fractional}. 
Next, note that
\begin{equation}\label{Linf_V}
\|\nabla^2 V_i[\ro]\|_{L^\infty} \leq \sum_{j\neq i}\|\nabla^2 K_{ij} \ast \rho_j\|_{L^\infty} \leq \sum_{j\neq i} \|\nabla^2 K_{ij}\|_{L^1} \|\rho_j\|_{L^\infty} \leq C\sum_{j\neq i}\|\rho_j\|_{H^s} \leq C\|\ro\|_{H^s}
\end{equation}
and
\[
\|\nabla^2 V_i[\ro]\|_{H^s} \leq \sum_{j\neq i}\|\nabla^2 K_{ij} \ast \rho_j\|_{H^s} \leq \sum_{j\neq i}\|\nabla^2 K_{ij}\|_{L^1}\|\rho_j\|_{H^s}\leq C\|\ro\|_{H^s}.
\]
Then we estimate
\begin{align*}
II &= \intr (\Lambda^s \rho_i) \Lambda^s (\nabla \rho_i \cdot \nabla V_i[\ro] + \rho_i \Delta V_i[\ro])\,dx\cr
&= \intr (\Lambda^s \rho_i) \nabla (\Lambda^s \rho_i) \cdot \nabla V_i[\ro]\,dx + \intr (\Lambda^s \rho_i) [\Lambda^s, \nabla V_i[\ro]]\nabla \rho_i\,dx + \intr (\Lambda^s \rho_i)\Lambda^s (\rho_i \Delta V_i[\ro])\,dx\cr
&\leq \|\Delta V_i[\ro]\|_{L^\infty} \|\Lambda^s \rho_i\|_{L^2}^2 + \|\Lambda^s \rho_i\|_{L^2}(\|[\Lambda^s, \nabla V_i[\ro]]\nabla \rho_i\|_{L^2} + \|\Lambda^s (\rho_i \Delta V_i[\ro])\|_{L^2}).
\end{align*}
We now use Lemma \ref{lem_moser} to deduce
\[
\|[\Lambda^s, \nabla V_i[\ro]]\nabla \rho_i\|_{L^2} \lesssim \|\nabla V_i[\ro]\|_{H^s}\|\nabla \rho_i\|_{L^\infty} + \|\nabla^2 V_i[\ro]\|_{L^\infty}\|\nabla \rho_i\|_{H^{s-1}} \leq C\|\ro\|_{H^s}^2 
\]
and
\[
\|\Lambda^s (\rho_i \Delta V_i[\ro])\|_{L^2} \lesssim \|\Lambda^s \rho_i\|_{L^2}\|\Delta V_i[\ro]\|_{L^\infty} + \|\rho_i\|_{L^2}\|\Lambda^s \Delta V_i[\ro]\|_{L^\infty} \leq C\|\ro\|_{H^s}^2.
\]
Hence, by combining all of the above estimates, we have
\[
\frac{d}{dt}\|\ro\|_{H^s} \leq C\|\ro\|_{H^s}^2,
\]
and subsequently, by choosing a $T^* > 0$ small enough, we conclude the desired result.
\end{proof}

We now present the result on the existence and uniqueness of regular solutions to system \eqref{eq:singular_limit}.
\begin{theorem}\label{thm_classical}Let $d\geq 1$, $s> \frac{d}{2}+3$, and $-2 \leq \ai - d\leq 0$ for $\fori$. Then, the system \eqref{eq:singular_limit} admits a local-in-time unique solution $\rho_i \in L^1 \cap H^s (\R^d)$, i.e., for any non-negative initial datum $\ro_0 \in (L^1\cap H^s (\R^d))^N$, there exists $T=T(\ro_0)>0$ and some non-negative solutions $\ro \in \mc ([0,T); L^1\cap H^s (\R^d))^N$ to \eqref{eq:singular_limit} with $\ro(t=0)=\ro_0$.
\end{theorem}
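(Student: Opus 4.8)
The plan is to obtain the solution by the approximation scheme of \cite{choi_jeong_fractional}, run componentwise, with the a priori bound of Lemma~\ref{lem_apriori} as the main ingredient. First I recast \eqref{eq:singular_limit} as in the proof of that lemma, namely
\[
\partial_t\rho_i + \nabla\cdot(\rho_i u_i)=0,\qquad u_i = -\Lambda^{\alpha_i-d}\nabla\rho_i - \nabla V_i[\ro],\quad V_i[\ro]=\sum_{j\neq i}K_{ij}\ast\rho_j,
\]
so that each component is a (fractional) porous-medium-type equation perturbed by the \emph{smooth} nonlocal drift $\nabla V_i[\ro]$, coupled to the other species only through that drift. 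I regularise by vanishing viscosity, replacing the right-hand side by $-\nabla\cdot(\rho_i^{(n)}u_i^{(n)})+\tfrac1n\Delta\rho_i^{(n)}$, and mollify the data to non-negative $\rho_{i0}^{(n)}\to\rho_{i0}$ in $L^1\cap H^s$. For each fixed $n$ the resulting system is uniformly parabolic in $x$ with a bounded smooth divergence-form drift (the self-term $\nabla\cdot(\rho_i^{(n)}\Lambda^{\alpha_i-d}\nabla\rho_i^{(n)})$ is a non-negative-definite operator of order $d-\alpha_i\le 2$ with coefficient $\rho_i^{(n)}\ge 0$, dominated by $\tfrac1n\Delta$), so standard parabolic theory yields a unique smooth local solution $\ro^{(n)}\in\mc([0,T_n);(L^1\cap H^m(\R^d))^N)$ for every $m$; the maximum principle gives $\rho_i^{(n)}\ge 0$, and integrating in $x$ gives the mass conservation $\|\rho_i^{(n)}(t)\|_{L^1}=\|\rho_{i0}^{(n)}\|_{L^1}$.

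The second step is to show that the a priori bound of Lemma~\ref{lem_apriori} holds uniformly in $n$. Repeating the $\Lambda^s$-energy estimate for the viscous system only adds the non-positive contributions $-\tfrac1n\sum_i\|\nabla\Lambda^s\rho_i^{(n)}\|_{L^2}^2$, so one still obtains $\tfrac{d}{dt}\|\ro^{(n)}\|_{H^s}\le C\|\ro^{(n)}\|_{H^s}^2$ with $C$ independent of $n$, the cross-terms being controlled as in \eqref{Linf_V} using $\nabla K_{ij}\in W^{1,1}(\R^d)$ and the self-terms as in \cite{choi_jeong_fractional}. Hence there is $T^\ast=T^\ast(\ro_0)>0$, independent of $n$, with $\sup_{[0,T^\ast]}\|\ro^{(n)}(t)\|_{H^s}\le 4\|\ro_0\|_{H^s}$, which in particular continues each $\ro^{(n)}$ to all of $[0,T^\ast]$; from the equation one then gets, uniformly in $n$, a bound for $\partial_t\rho_i^{(n)}$ in $L^\infty(0,T^\ast;H^{s-2}(\R^d))$ (the viscous term even tends to $0$ in that norm, and $\Lambda^{\alpha_i-d}\nabla$ loses at most one derivative).

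In the third step I pass to the limit. By an Aubin--Lions argument (compact embedding $H^s_{\mathrm{loc}}\hookrightarrow\hookrightarrow H^{s'}_{\mathrm{loc}}$ for some $s'\in(s-2,s)$, so $s'>\tfrac d2+1$, together with the $H^{s-2}$ bound on $\partial_t\rho_i^{(n)}$), along a subsequence $\rho_i^{(n)}\to\rho_i$ strongly in $\mc([0,T^\ast];H^{s'}_{\mathrm{loc}})$ and weakly-$*$ in $L^\infty(0,T^\ast;H^s)$, with $\rho_i\ge 0$. Since $\Lambda^{\alpha_i-d}\nabla$ and convolution with $\nabla K_{ij}$ are bounded on the relevant Sobolev scales, the strong local convergence together with the uniform bounds suffices to pass to the limit in every term of the weak formulation, including the nonlocal self-term (strong $\times$ weak convergence against compactly supported test functions), while $\tfrac1n\Delta\rho_i^{(n)}\to 0$; thus $\ro$ solves \eqref{eq:singular_limit} with datum $\ro_0$. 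Lower semicontinuity gives $\ro\in L^\infty(0,T^\ast;(H^s)^N)$, the membership $\rho_i(t)\in L^1$ with $\|\rho_i(t)\|_{L^1}\le\|\rho_{i0}\|_{L^1}$ follows from $\rho_i\ge 0$, Fatou and the mass conservation of the $\ro^{(n)}$, and the upgrade from weak-$*$ to $\ro\in\mc([0,T^\ast];(L^1\cap H^s)^N)$ is the usual continuity-of-the-norm (Bona--Smith) argument, applying the a priori estimate to differences of mollified data.

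Uniqueness is the delicate point and the expected main obstacle. Given two solutions $\ro,\tilde\ro\in\mc([0,T^\ast];(L^1\cap H^s)^N)$ with the same datum, set $w_i=\rho_i-\tilde\rho_i$ and run an energy estimate for $\sum_i\|w_i\|_{L^2}^2$ (or, if needed, in a species-dependent negative Sobolev norm, as is natural for fractional porous-medium equations). Writing $\rho_i u_i-\tilde\rho_i\tilde u_i=w_i u_i+\tilde\rho_i(u_i-\tilde u_i)$ with $u_i-\tilde u_i=-\Lambda^{\alpha_i-d}\nabla w_i-\nabla V_i[\mathbf{w}]$, the transport and cross-interaction contributions are bounded by $\big(\|\nabla u_i\|_{L^\infty}+\|\nabla\tilde\rho_i\|_{L^\infty}\sum_{j\neq i}\|\nabla^2 K_{ij}\|_{L^1}\big)\sum_i\|w_i\|_{L^2}^2$, all finite since $s>\tfrac d2+3$, while the self-interaction difference yields, after integration by parts, the dissipative term $-\int_{\R^d}\tilde\rho_i\,|\Lambda^{(\alpha_i-d+2)/2}w_i|^2\,dx\le 0$ (using $\alpha_i-d+2\ge 0$ \emph{and} $\tilde\rho_i\ge 0$) plus commutator/remainder terms bounded, via Lemma~\ref{lem_moser} and $w_i\in L^1\cap L^2$, by $C(\|\ro\|_{H^s},\|\tilde\ro\|_{H^s})\sum_i\|w_i\|_{L^2}^2$; Grönwall's lemma then forces $\mathbf{w}\equiv 0$ on $[0,T^\ast]$. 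The crux throughout is precisely this self-interaction term: when $\alpha_i=d-2$ it is a genuine second-order (degenerate) diffusion, so it can only be absorbed through its dissipative sign, which is why the non-negativity of $\rho_i$ must be propagated from the approximations and exploited at every stage — both in Lemma~\ref{lem_apriori} (implicitly, through \cite{choi_jeong_fractional}) and in the uniqueness estimate — whereas the multispecies coupling itself is harmless, entering only through the smooth kernels $K_{ij}$, $i\neq j$.
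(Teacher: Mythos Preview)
Your existence argument is essentially the approach the paper has in mind: the paper simply says that existence follows by ``combining the a priori estimate in Lemma~\ref{lem_apriori} and the classical approximation arguments'', and your vanishing--viscosity scheme with Aubin--Lions compactness is exactly such an argument (the paper's Remark after the theorem describes an equivalent iterative scheme). No objection there.

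The uniqueness argument, however, has a genuine gap, and the paper proceeds differently. You try to close the estimate at the $L^2$ level by extracting the dissipative term $-\int_{\R^d}\tilde\rho_i\,|\Lambda^{(\alpha_i-d)/2}\nabla w_i|^2\,dx\le 0$ and asserting that the commutator remainders are bounded by $C\sum_i\|w_i\|_{L^2}^2$ via Lemma~\ref{lem_moser}. Two problems: first, Lemma~\ref{lem_moser} is stated for \emph{positive} orders, whereas the commutators you need involve $\Lambda^{(\alpha_i-d)/2}$ with $(\alpha_i-d)/2\in[-1,0]$; second, and more seriously, even with the right commutator estimate the remainder typically carries a factor $\|w_i\|_{\dot H^{(\alpha_i-d)/2+1}}$, which for $\alpha_i>d-2$ is a genuinely positive-order norm. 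The only place to absorb it is the dissipation, but that dissipation is weighted by $\tilde\rho_i$, and since $\tilde\rho_i\in L^1(\R^d)$ it is \emph{not} bounded below --- so the weighted term cannot control the unweighted $\dot H^{(\alpha_i-d)/2+1}$ norm. Your parenthetical fallback to ``a species-dependent negative Sobolev norm'' hints you sensed this, but it is not carried out.

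The paper avoids this degeneracy altogether by closing at the $H^1$ level: it first estimates $\frac{d}{dt}\|g_i\|_{L^2}^2$ and obtains a right-hand side involving $\|g_i\|_{H^1}^2$ (no sign is used), then separately estimates $\frac{d}{dt}\|\partial g_i\|_{L^2}^2$ following \cite{choi_jeong_fractional}, arriving at a closed inequality $\frac{d}{dt}\sum_i\|g_i\|_{H^1}^2\le C\sum_i\|g_i\|_{H^1}^2$ and concluding by Gr\"onwall. This is why the regularity threshold $s>\tfrac d2+3$ is needed (so that $\nabla^2\tilde\rho_i\in L^\infty$, etc.). The cross-terms are indeed harmless, as you say, but the self-interaction is handled by brute-force $H^1$ bounds rather than by its sign.
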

\begin{proof}
{\it (existence)}: The existence of solutions $\rho_i \in L^\infty(0,T^*; L^1 \cap H^s (\R^d))$ can be obtained by combining the a priori estimate in Lemma \ref{lem_apriori} and the classical approximation arguments. Thus, we omit its details here, and refer to \cite{choi_jeong_fractional} for readers who are interested in it. 

{\it (uniqueness)}: Let $\rho_i$ and $\tilde \rho_i$ be two solutions to \eqref{eq:singular_limit} obtained in the above. We also let $v_i$ and $\tilde v_i$ be corresponding vector fields. Then defining $g_i := \rho_i - \tilde \rho_i$ and $u_i = v_i - \tilde v_i$, we find that $g_i$ satisfies
\[
\partial_t g_i + \nabla \cdot (g_i v_i + \tilde \rho_i u_i) = 0.
\]
We now estimate
\[
\frac{d}{dt}\|g_i\|_{L^2}^2 \leq C(\|\nabla v_i\|_{L^\infty}\|g_i\|_{L^2}^2 + \|\nabla \tilde \rho_i\|_{L^\infty}\|g_i\|_{L^2}\|g_i\|_{H^1} + \|\tilde \rho_i\|_{L^\infty}\|g_i\|_{H^1}^2),
\]
thanks to \eqref{Linf_V} and $s > \frac d2 + 3$.
Since 
\[
\|\nabla v_i\|_{L^\infty} \lesssim \|\Lambda^{\alpha_i - d}\nabla^2 \rho_i\|_{L^\infty} + \|\nabla^2 V_i[\ro]\|_{L^\infty} \lesssim \|\ro\|_{H^s},
\]
we arrive at
\begin{equation}\label{g_i_est}
\frac{d}{dt}\|g_i\|_{L^2}^2 \leq C\|g_i\|_{H^1}^2.    
\end{equation}
We next estimate $\|\nabla g_i\|_{H^1}$. Note that
\begin{align*}
\frac12\frac{d}{dt}\|\partial g_i\|_{L^2}^2 &= - \intr \nabla \cdot (\partial g_i v_i) \partial g_i\,dx - \intr \nabla \cdot (g_i \partial v_i) \partial g_i\,dx \cr
&\quad - \intr \nabla \cdot (\partial \tilde \rho_i u_i) \partial g_i\,dx - \intr \nabla \cdot (\tilde \rho_i \partial u_i) \partial g_i\,dx.
\end{align*}
Following \cite{choi_jeong_fractional}, we deduce that the right hand side of the above can be bounded by
\[
C(\|\ro\|_{H^s} + \|\tilde \ro\|_{H^s})\|g_i\|_{H^1}^2 + (\|\nabla \cdot (\partial \tilde \rho_i \nabla V_i[\ro - \tilde\ro])\|_{L^2} + \|\nabla \cdot (\tilde \rho_i \partial \nabla V_i[\ro - \tilde\ro])\|_{L^2})\|g_i\|_{H^1}.
\]
Here we further estimate
\begin{align*}
&\|\nabla \cdot (\partial \tilde \rho_i \nabla V_i[\ro - \tilde\ro])\|_{L^2} + \|\nabla \cdot (\tilde \rho_i \partial \nabla V_i[\ro - \tilde\ro])\|_{L^2} \cr
&\quad \leq C\|\nabla^2 \tilde \rho_i\|_{L^\infty}\|\nabla V_i[\ro - \tilde\ro]\|_{L^2} + \|\partial \tilde \rho_i\|_{L^\infty}\|\nabla^2 V_i[\ro - \tilde\ro]\|_{L^2} + \|\tilde \rho_i\|_{L^\infty}\|\nabla^3 V_i[\ro - \tilde\ro]\|_{L^2}\cr
&\quad \leq C\sum_{j \neq i}\|g_j\|_{H^1}\|\tilde \ro\|_{H^s}
\end{align*}
to yield
\[
\frac12\frac{d}{dt}\sum_{i=1}^N\|\partial g_i\|_{L^2}^2 \leq C\sum_{i=1}^N\|g_i\|_{H^1}^2.
\]
We finally combine the above and \eqref{g_i_est} to conclude that
\[
\rho_i \equiv \tilde \rho_i
\]
provided that $\rho_i(0) = \tilde\rho_i(0)$ for all $i=1,\dots, N$. This completes the proof.
\end{proof}
\begin{rmk} Here we sketch another way of showing the existence and uniqueness of regular solutions to \eqref{eq:singular_limit}. 
We first approximate the system \eqref{eq:singular_limit} by
    \[
    \begin{dcases}
    \partial_t \rho\i^{n+1} + \nabla \cdot (\rho\i^{n+1} u\i^n)=0,\\
    u\i^n= - \Lambda ^{\alpha_i - d} \nabla \rho\i^n - \nabla V\i[\ro^n],
\end{dcases}
\]
for $\fori$, with 
\[
V\i[\ro^n] = \sum_{j\neq i} K\ij \ast \rho\j^n\, .
\]
Here  the initial datum and first iteration step are given by
\[
\rho_i^n(0,x) = \rho_i(0,x) \quad \mbox{ for all $i=1,\dots,N $ and for all $n \geq 1$}
\]
and
\[
\rho_i^0(t,x) = \rho_i(0,x) \quad \mbox{ for all $i=1,\dots,N$.}
\]
We then combine the a priori estimate in Lemma \ref{lem_apriori} and the uniqueness estimate in the proof of Theorem \ref{thm_classical} to show that the approximate sequence $\{\rho_i^n\}_{n \in \N}$ is Cauchy in $L^\infty(0,T^*; H^1)$. From which, we have the existence of limit function $\rho_i$ and show that function is the solution to \eqref{eq:singular_limit}. 
\end{rmk}

%%%%%%%%%%%%%%%%%%%%%%%%%%%%%%%%%%%%%%%%%%%%%%%%%%%%%%%%%%%%%%%%%%%%%%%
%
%
%
%
%
%
%
%%%%%%%%%%%%%%%%%%%%%%%%%%%%%%%%%%%%%%%%%%%%%%%%%%%%%%%%%%%%%%%%%%%%%%%
\subsection{Small inertia limit}
\label{sec:singular_small_inertia}
In this subsection, we show the rigorous small inertia limit of the system \eqref{eq:singular_main} providing Theorem \ref{thm_ktoc}. Since we want to study the behaviour of solutions to kinetic system \eqref{eq:singular_main} with respect to the inertia parameter $\varepsilon >0$, we explicit the $\varepsilon$-dependence, namely we define $\f^\e = (f\i^\e)_{i=1}^N$ to be a weak solution to the system:
\begin{equation}
\label{eq:singular_eps}
\partial_t f\i^\e +v\cdot \nabla_x f\i^\e =\frac{1}{\varepsilon} \nabla_v \cdot (vf\i^\e ) + \frac{1}{\varepsilon} \bigg( \sum_{j=1}^N \nabla K\ij\ast \rho\j^\e \bigg) \cdot \nabla_v f\i^\e,
\end{equation}
for $\fori$, with smooth cross-potentials as in \eqref{pot} and singular self-potentials of the form \eqref{eq:singular_potential}. 

We quantitatively show the convergence of solutions $\f^\e$ towards the system  
\begin{equation}
\label{eq:singular_lim}
\begin{dcases}
\partial _t \rho\i = \nabla \cdot (\rho\i u\i),\\
u\i = \sum_{j=1}^N \nabla K\ij \ast \rho\j,
\end{dcases}
\end{equation}
for $\fori$, as $\e \to 0$.

In order to measure the error between solutions to the systems \eqref{eq:singular_eps} and \eqref{eq:singular_lim}, we employ the modulated energy method. For this, we first recall from \cite{CJpre} (see also \cite{NRSpre}) the following modulated interaction energy estimates. 

\begin{theorem}\label{thm_mod}
Let $T>0$ and $K$ be given by 
\[
K(x) = \frac1{\abs{x}^\alpha} \quad \mbox{with } \alpha \in (0,d).
\] 
Suppose that the pairs $(\bar\rho, \bar u)$ and $(\rho,u)$ satisfy the followings:
\begin{enumerate}
\item[(i)]
$(\bar\rho, \bar u)$ and $(\rho,u)$ satisfy the continuity equations in the sense of distribution:
\[
\partial_t \bar\rho + \nabla \cdot (\bar\rho \bar u) =0 \quad \mbox{and} \quad \partial_t \rho + \nabla \cdot (\rho u) =0,
\]
\item[(ii)]
$(\bar\rho, \bar u)$ and $(\rho,u)$ satisfy the energy inequality:
\[
\sup_{0\le t \le T}\bigg( \intr \bar\rho \abs{\bar u}^2\,dx + \intr \bar\rho K\ast\bar\rho\,dx\bigg) <\infty,
\] and
\[ \sup_{0\le t \le T}\bigg( \intr \rho \abs{u}^2\,dx + \intr \rho K\ast\rho\,dx\bigg) <\infty,
\]
\item[(iii)]
$\bar\rho, \rho\in \mc((0,T); L^1(\R^d))$, $\nabla u\in L^\infty(\R^d \times (0,T))$ and if $\alpha<d-2$, 
\[
\left\{\begin{array}{lcl}
\nabla^{[(d-\alpha)/2]+1} u\in L^\infty(0,T;L^{\frac{d}{[(d-\alpha)/2]}}(\R^d)) & \mbox{if} & \alpha\in(0,d-2)\setminus(d-2\N),\\
\nabla^{\frac{d-\alpha}{2}} u \in L^\infty(0,T;L^{\frac{2d}{d-\alpha-2}}(\R^d)) & \mbox{if} & \alpha\equiv d \ \emph{ mod } 2, 
\end{array}\right.
\]
where $d-2\N \coloneqq \{d-2n \ : \ n \in \N\}$ and $[ \,\cdot \, ]$ denotes the floor function. 
\end{enumerate}
Then we have
\[
\frac12\frac{d}{dt}\intr (\rho -\bar\rho) K \ast (\rho - \bar\rho)\,dx \leq \intr \bar\rho(u - \bar u) \cdot \nabla K \ast (\rho - \bar\rho)\,dx + C\intr (\rho - \bar\rho) K \ast (\rho - \bar\rho)\,dx
\]
for $t \in [0,T)$ and some $C>0$ which depends only on $\alpha$, $d$ and $\norm{\nabla u}_{L^\infty(\R^d \times (0,T))}$, and  if $d < \alpha-2$, additionally  
\[
\left\{\begin{array}{lcl}
\norm{\nabla^{[(d-\alpha)/2]+1} u}_{L^\infty((0,T);L^{\frac{d}{[(d-\alpha)/2]-1}})}, & \mbox{if} & \alpha\in(0,d-2)\setminus (d-2\N),\\
\norm{\nabla^{(d-\alpha)/2}  u}_{L^\infty((0,T);L^{\frac{2d}{d-\alpha-2}})}, & \mbox{if} & \alpha\equiv d \ \emph{mod } 2.
\end{array}\right.\]
\end{theorem}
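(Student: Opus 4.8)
The plan is to run a modulated-energy argument, following \cite{CJpre,NRSpre}. First I would set $\sigma \coloneqq \rho - \bar\rho$ and introduce the modulated interaction energy
\[
\mathcal E[\sigma] \coloneqq \intr \sigma\,(K\ast\sigma)\,dx .
\]
Since $\widehat K(\xi) = c_{d,\alpha}\abs{\xi}^{\alpha-d}$ for a positive constant $c_{d,\alpha}$ depending only on $d$ and $\alpha$, the bilinear form $(\sigma_1,\sigma_2)\mapsto\intr\sigma_1(K\ast\sigma_2)\,dx$ is positive semidefinite and $\mathcal E[\sigma] = c_{d,\alpha}^{-1}\bigl\|\Lambda^{(d-\alpha)/2}(K\ast\sigma)\bigr\|_{L^2}^2\geq 0$; combined with (ii) and the Cauchy--Schwarz inequality for this form, this makes $\mathcal E[\sigma]$ finite on $[0,T)$. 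Differentiating in time, using $K(x)=K(-x)$ and $\partial_t\sigma = -\nabla\cdot(\rho u - \bar\rho\bar u)$ from (i), gives
\[
\frac12\frac{d}{dt}\mathcal E[\sigma] = \intr (\rho u - \bar\rho\bar u)\cdot(\nabla K\ast\sigma)\,dx .
\]
At this level of regularity this time differentiation and all integrations by parts below are formal; they are made rigorous in the usual way by mollifying $\sigma$ in space (using $\rho,\bar\rho\in\mc((0,T);L^1)$ from (iii)), running the estimates for the regularised objects, and passing to the limit with (i)--(ii).

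Next I would decompose the flux as $\rho u - \bar\rho\bar u = \bar\rho(u-\bar u) + \sigma\,u$. The first summand reproduces, with equality, the first term on the right-hand side of the claimed inequality, so the theorem reduces to the commutator-type bound
\[
\Bigl|\intr \sigma\,u\cdot(\nabla K\ast\sigma)\,dx\Bigr| \leq C\,\mathcal E[\sigma]
\]
with $C$ as in the statement. As $\nabla K$ is odd, symmetrising the corresponding double integral yields
\[
\intr \sigma\,u\cdot(\nabla K\ast\sigma)\,dx = \frac12\intdd\sigma(x)\sigma(y)\,\bigl(u(x)-u(y)\bigr)\cdot\nabla K(x-y)\,dx\,dy .
\]
The crucial point --- and the reason the higher-regularity hypotheses in (iii) enter --- is that $\sigma$ is sign-changing, so the naive pointwise bound $\abs{(u(x)-u(y))\cdot\nabla K(x-y)}\lesssim\norm{\nabla u}_{L^\infty}K(x-y)$ is \emph{not} enough: it only controls $\intdd\abs{\sigma(x)}\abs{\sigma(y)}K(x-y)\,dx\,dy$, which is not dominated by $\mathcal E[\sigma]$. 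One must exploit the cancellation carried by the positive-definite form.

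To that end I would pass to the potential $\phi \coloneqq K\ast\sigma = c_{d,\alpha}\Lambda^{\alpha-d}\sigma$, so that $\sigma = c_{d,\alpha}^{-1}\Lambda^{d-\alpha}\phi$; writing $s\coloneqq(d-\alpha)/2\in(0,d/2)$, we have $\mathcal E[\sigma] = c_{d,\alpha}^{-1}\norm{\Lambda^s\phi}_{L^2}^2$ and
\[
\intr \sigma\,u\cdot(\nabla K\ast\sigma)\,dx = c_{d,\alpha}^{-1}\intr (\Lambda^{2s}\phi)\,u\cdot\nabla\phi\,dx = c_{d,\alpha}^{-1}\intr (\Lambda^{s}\phi)\,\Lambda^{s}(u\cdot\nabla\phi)\,dx .
\]
Splitting $\Lambda^{s}(u\cdot\nabla\phi) = u\cdot\nabla(\Lambda^s\phi) + [\Lambda^s,u\cdot\nabla]\phi$, the transport part integrates by parts to $-\tfrac12 c_{d,\alpha}^{-1}\intr(\nabla\cdot u)(\Lambda^s\phi)^2\,dx$, which is $\lesssim\norm{\nabla u}_{L^\infty}\mathcal E[\sigma]$, while the commutator part is at most $c_{d,\alpha}^{-1}\norm{\Lambda^s\phi}_{L^2}\norm{[\Lambda^s,u\cdot\nabla]\phi}_{L^2}$; thus everything reduces to the commutator estimate $\norm{[\Lambda^s,u]\cdot\nabla\phi}_{L^2}\lesssim(\text{a norm of }u)\,\norm{\Lambda^s\phi}_{L^2}$, where $[\Lambda^s,u\cdot\nabla]=[\Lambda^s,u]\cdot\nabla$ because $\Lambda^s$ commutes with $\nabla$. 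When $s\leq 1$, i.e.\ $\alpha\geq d-2$, this commutator involves only one derivative of $u$ and closes with $\norm{\nabla u}_{L^\infty}$ alone --- at the Coulomb endpoint $\alpha=d-2$ it is precisely the stress-tensor identity
\[
\intr (-\Delta\phi)\,u\cdot\nabla\phi\,dx = -\intr (\partial_i u_j)\bigl(\partial_i\phi\,\partial_j\phi - \tfrac12\delta_{ij}\abs{\nabla\phi}^2\bigr)\,dx .
\]
When $s>1$, i.e.\ $\alpha<d-2$, one Taylor-expands $u$ (equivalently, applies a Kato--Ponce-type commutator bound, in the form of Lemma \ref{lem_moser}(ii)) and uses the fractional Sobolev embedding of $\dot{H}^s$ into $\dot{W}^{1,q}$; this redistributes the derivatives and brings in exactly $[(d-\alpha)/2]+1$ derivatives of $u$ measured in $L^{d/[(d-\alpha)/2]}$ --- respectively $\nabla^{(d-\alpha)/2}u$ in $L^{2d/(d-\alpha-2)}$ when $(d-\alpha)/2$ is an integer, i.e.\ $\alpha\equiv d\bmod 2$ --- which is exactly assumption (iii). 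Collecting the transport and commutator contributions yields the asserted differential inequality, with $C$ depending on $\alpha$, $d$, $\norm{\nabla u}_{L^\infty}$ and, in the singular regime, on the higher-order norms of $u$ recorded in the statement.

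I expect the main obstacle to be this commutator estimate in the singular regime $\alpha<d-2$: one has to recover full $\dot{H}^s$ control of $\phi$ while shifting $s>1$ --- possibly non-integer --- derivatives onto $u$ in a scaling-critical way, and then verify that the function spaces in (iii) are exactly those in which the relevant Moser/Kato--Ponce inequalities (Lemma \ref{lem_moser}) and Sobolev embeddings close, with a separate book-keeping for the integer case $\alpha\equiv d\bmod 2$. By contrast, when $\alpha\geq d-2$ the argument is comparatively soft, requiring only $\nabla u\in L^\infty$ together with the stress-tensor manipulation above.
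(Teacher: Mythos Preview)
The paper does not prove this theorem: it is stated with the preamble ``we first recall from \cite{CJpre} (see also \cite{NRSpre}) the following modulated interaction energy estimates'' and is used as a black box in the proof of Theorem~\ref{thm_ktoc}. There is therefore no in-paper proof to compare against.

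That said, your outline is the standard modulated-interaction-energy argument in the spirit of \cite{CJpre,NRSpre}, and the structure is correct: differentiate $\mathcal E[\sigma]$ along the two continuity equations, split the flux as $\bar\rho(u-\bar u)+\sigma u$, keep the first piece as is, and reduce the second to a commutator-type bound $\bigl|\intr \sigma\,u\cdot(\nabla K\ast\sigma)\,dx\bigr|\leq C\,\mathcal E[\sigma]$. Your passage to the potential $\phi=K\ast\sigma$, the identity $\intr(\Lambda^{2s}\phi)\,u\cdot\nabla\phi\,dx=\intr(\Lambda^s\phi)\,\Lambda^s(u\cdot\nabla\phi)\,dx$, the transport/commutator splitting, and the stress-tensor identity at $\alpha=d-2$ are all the right moves. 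The only part you have left genuinely sketchy is the commutator estimate for $s=(d-\alpha)/2>1$: Lemma~\ref{lem_moser}(ii) as stated puts $\nabla\phi$ in $L^\infty$ or $\dot H^{s-1}$, neither of which is directly $\|\Lambda^s\phi\|_{L^2}$, so closing the loop requires the more refined Kato--Ponce/Leibniz and Sobolev bookkeeping carried out in \cite{CJpre} (this is exactly where the specific exponents $L^{d/[(d-\alpha)/2]}$ and $L^{2d/(d-\alpha-2)}$ in (iii) come from). You have correctly flagged this as the main obstacle; filling it in amounts to reproducing the relevant computation from \cite{CJpre}.
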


\begin{rmk} \label{rmk:macr_vel} If we define the macroscopic velocity as
\[ 
u (t,x)= \frac{\intr vf(t,x,v)\,dv}{\intr f(t,x,v)\,dv},
\]
then 
\[
 \rho^\e_i \abs{u^\e_i}^2     \leq \intr \abs{v}^2 f^\e_i\,dv, 
\]
and thus by Proposition \ref{prop_sing_sec_mom}, we obtain that for $\e > 0$
\[
\sum_{i=1}^N\intr \rho^\e_i \abs{u^\e_i}^2 \,dx < \infty
\]
on some time interval $[0,T]$.
\end{rmk}

We also recall from \cite[Lemma 4.1]{CCJ21} (see also \cite[Proposition 3.1]{Choi21}, \cite[Theorem 23.9]{Vi09}, \cite{ags, CC21, FK19}) the following lemma which gives a relation between the $1$-Wasserstein distance and modulated kinetic energy.
\begin{lemma}\label{lem_dbl} Let $T>0$ and $\bar\rho:[0,T] \to \calP(\R^d)$ be a narrowly continuous solution of 
\[
\partial_t \bar\rho + \nabla \cdot (\bar\rho \bar u) = 0,
\]
that is, $\bar\rho$ is continuous in the duality with continuous bounded functions, for a Borel vector field $\bar u$ satisfying
\[
\int_0^T \intr \abs{\bar u(x,t)}^p \bar\rho(x,t)\,dx\,dt < \infty
\]
for some $p > 1$. Let $\rho \in \mc([0,T]; \calP_p(\R^d))$ be a solution of the following continuity equation
\[
\partial_t \rho + \nabla \cdot (\rho u) = 0
\]
with the velocity fields $u \in L^\infty((0,T); \dot{W}^{1,\infty}(\R^d))$. Then there exists a $C_{u,T} > 0$ depending only on $T$ and $\norm{\nabla u}_{L^\infty}$ such that for all $t \in [0,T]$
\[
W_1^2(\rho, \bar\rho) \leq C_{u,T} \left(W_1^2(\rho_0, \bar\rho_0) + \int_0^t \intr \rho^\e \abs{u^\e - u}^2\,dx\,ds \right),
\]
where $\rho^\varepsilon$ and $u^\varepsilon$ are defined in Remark \ref{rmk:macr_vel}.
\end{lemma}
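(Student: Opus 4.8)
The plan is to establish this as a weak--strong stability estimate for continuity equations, representing both densities via characteristics and running a Gr\"onwall argument along a coupled family of curves. First, since $u \in L^\infty((0,T);\dot{W}^{1,\infty}(\R^d))$, the non-autonomous ODE $\dot X = u(X,t)$ generates a flow map $\Phi_t\colon\R^d\to\R^d$ with $\mathrm{Lip}(\Phi_t)\le e^{\norm{\nabla u}_{L^\infty}t}$, and by the standard uniqueness theory for continuity equations with Lipschitz drift the given $\rho$ satisfies $\rho(t)=(\Phi_t)_\#\rho_0$. On the other hand, the hypothesis $\int_0^T\intr\abs{\bar u}^p\bar\rho\,dx\,dt<\infty$ with $p>1$ yields $\int_0^T\intr\abs{\bar u}\bar\rho\,dx\,dt<\infty$ (H\"older, using that $\bar\rho(t)$ is a probability measure), so Ambrosio's superposition principle applies to the narrowly continuous curve $\bar\rho$: there exists $\eta\in\calP(C([0,T];\R^d))$ concentrated on absolutely continuous curves $\sigma$ with $\dot\sigma(t)=\bar u(\sigma(t),t)$ for a.e.\ $t$ and $(\mathrm{ev}_t)_\#\eta=\bar\rho(t)$ for every $t\in[0,T]$, where $\mathrm{ev}_t(\sigma)=\sigma(t)$.

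Next I would build a coupling at the level of paths. Take $\gamma_0\in\Pi(\rho_0,\bar\rho_0)$ optimal for $W_1$, disintegrate $\gamma_0=\int(\gamma_0)_x\,d\rho_0(x)$ and $\eta=\int\eta_y\,d\bar\rho_0(y)$ with $\eta_y$ carried by curves issuing from $y$, and set
\[
\Theta\coloneqq\int_{\R^d}\int_{\R^d}\eta_y\,d(\gamma_0)_x(y)\,d\rho_0(x),
\]
a probability measure on $\R^d\times C([0,T];\R^d)$. A direct check shows that $\Theta$ has $\R^d$-marginal $\rho_0$, curve marginal $\eta$ (since the second marginal of $\gamma_0$ is $\bar\rho_0$), and that $(x,\sigma)\mapsto(x,\sigma(0))$ pushes $\Theta$ forward to $\gamma_0$. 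Consequently, for each $t$, the map $(x,\sigma)\mapsto(\Phi_t(x),\sigma(t))$ pushes $\Theta$ to an element of $\Pi(\rho(t),\bar\rho(t))$, and therefore $W_1(\rho(t),\bar\rho(t))\le\int\abs{\Phi_t(x)-\sigma(t)}\,d\Theta(x,\sigma)$.

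To close the estimate I would bound the integrand by Gr\"onwall: for $\Theta$-a.e.\ $(x,\sigma)$ and a.e.\ $t$,
\[
\frac{d}{dt}\abs{\Phi_t(x)-\sigma(t)}\le\abs{u(\Phi_t(x),t)-u(\sigma(t),t)}+\abs{(u-\bar u)(\sigma(t),t)}\le\norm{\nabla u}_{L^\infty}\abs{\Phi_t(x)-\sigma(t)}+\abs{(u-\bar u)(\sigma(t),t)},
\]
so $\abs{\Phi_t(x)-\sigma(t)}\le e^{\norm{\nabla u}_{L^\infty}t}\bigl(\abs{x-\sigma(0)}+\int_0^t\abs{(u-\bar u)(\sigma(s),s)}\,ds\bigr)$. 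Integrating in $\Theta$, using $\int\abs{x-\sigma(0)}\,d\Theta=W_1(\rho_0,\bar\rho_0)$ and $\int\abs{(u-\bar u)(\sigma(s),s)}\,d\Theta=\intr\abs{(u-\bar u)(\cdot,s)}\,d\bar\rho(s)$ (because $(\mathrm{ev}_s)_\#\eta=\bar\rho(s)$), and then Cauchy--Schwarz in $(s,x)$ against the measure $\bar\rho(s)\,ds$ of total mass $t\le T$, gives
\[
W_1(\rho(t),\bar\rho(t))\le e^{\norm{\nabla u}_{L^\infty}T}\Bigl(W_1(\rho_0,\bar\rho_0)+T^{1/2}\Bigl(\int_0^t\intr\abs{u-\bar u}^2\bar\rho\,dx\,ds\Bigr)^{1/2}\Bigr).
\]
Squaring and applying $(a+b)^2\le 2a^2+2b^2$ yields the asserted bound with $C_{u,T}=2e^{2\norm{\nabla u}_{L^\infty}T}\max\{1,T\}$; specializing to $\bar\rho=\rho^\e$, $\bar u=u^\e$, whose modulated kinetic energy $\intr\rho^\e\abs{u^\e-u}^2\,dx$ is finite by Remark \ref{rmk:macr_vel}, gives precisely the inequality in the statement.

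The main obstacle I anticipate is the path-coupling step: one must pass through the superposition principle to make rigorous sense of characteristics for the only $L^p$-integrable field $\bar u$, and then correctly glue the optimal plan $\gamma_0$ with the disintegrated superposition measure so that the time-zero marginals match. After that the Gr\"onwall and interpolation computations are routine; I would also verify the standard technical points (measurability of $\bar u$ along $\eta$-a.e.\ curve, and well-posedness of the flow $\Phi_t$ under only $\dot{W}^{1,\infty}$ control on $u$) that are customary in this circle of arguments.
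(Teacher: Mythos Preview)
The paper does not prove this lemma; it simply recalls it from \cite[Lemma~4.1]{CCJ21} (with further pointers to \cite{Choi21, Vi09, ags, CC21, FK19}). Your argument---Lipschitz flow representation for $\rho$, Ambrosio's superposition principle for $\bar\rho$, a path-level coupling gluing an optimal $W_1$-plan at time zero with the disintegrated superposition measure, then Gr\"onwall along coupled trajectories and Cauchy--Schwarz in time---is correct and is exactly the strategy carried out in those references. The technical caveats you flag (global well-posedness of $\Phi_t$ when $u$ is only in $\dot W^{1,\infty}$, measurability of $\bar u$ along $\eta$-a.e.\ curve) are the standard ones; in the paper's application $u_i=\sum_j\nabla K_{ij}\ast\rho_j$ is bounded as well as Lipschitz, which removes the first concern, and the second is part of the superposition statement itself. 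Your final specialization $\bar\rho=\rho^\e$, $\bar u=u^\e$ is also the intended reading of the (somewhat informally stated) conclusion.
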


\begin{rmk}
Since 
\[
\rho^\e_i \abs{u^\e_i - u_i}^2 \leq \intr f^\e_i \abs{ v - u_i}^2\,dv,
\]
Lemma \ref{lem_dbl} particularly implies
\[
W_1^2(\rho_i, \bar\rho_i) \leq C_{u,T} \bigg(W_1^2(\rho_{i0}, \bar\rho_{i0}) + \int_0^t \intdd f^\e_i \abs{v - u_i}^2\,dx\,dv\,ds \bigg),
\]
for $\fori$.
\end{rmk}

We are now in a position to provide the details of proof for Theorem \ref{thm_ktoc}.

\begin{proof}[Proof of Theorem \ref{thm_ktoc}]
We first rewrite the system \eqref{eq:singular_lim} as
\[% \begin{align}\label{eq:singular_limit2}
\begin{aligned}
&\partial_t \rho_i + \nabla \cdot (\rho_i u_i) = 0, \cr
&\e\partial_t u_i +\e u_i \cdot \nabla u_i =  - u_i -\sum_{j=1}^N \nabla K_{ij}\ast\rho_j + \e \mathrm{e}_i,
\end{aligned}
\]%\end{align}
where $\mathrm{e}_i\coloneqq \partial_t u_i + u_i \cdot \nabla u_i$, for $i=1,\dots,N$.
For the error estimates, we consider the modulated kinetic and interaction energies:
\[
\calE_{K}(f^\e_i| \rho_i, u_i) \coloneqq \frac12\intdd \abs{u_i-v}^2 f^\e_i\,dx\,dv + \frac1{2\e}\intr (\rho_i - \rho^\e_i )K_{ii} * (\rho_i - \rho^\e_i)\,dx.
\]
Straightforward computation yields that for each $i=1,\dots,N$
\[
\begin{aligned}
&\frac12\frac{d}{dt}\intdd \abs{u_i-v}^2 f^\e_i\,dx\,dv + \frac{1}{\e} \intdd  \abs{u_i-v}^2 f^\e_i\,dx\,dv\cr
&\quad = \intdd (u_i - v) \otimes (v-u_i) : \nabla_x u_i  f^\e_i\,dx\,dv - \intdd (v- u_i) \cdot \mathrm{e}_i f\i^\e\,dx\,dv\cr
&\qquad \quad + \frac1\e \intdd (v-u_i) \cdot \bigg(\sum_{j=1}^N \nabla K_{ij} *(\rho_j - \rho^\e_j)  \bigg)f^\e_i\,dx\,dv\cr
&\quad \eqqcolon I + II + III,
\end{aligned}
\]
where
\[
I \leq \norm{\nabla u_i}_{L^\infty}\intdd \abs{u_i-v}^2 f^\e_i\,dx\,dv, 
\]
and
\[
II \leq 4\e\norm{\mathrm{e}_i}_{L^\infty} + \frac1\e\intdd \abs{u_i-v}^2 f^\e_i\,dx\,dv.
\]
For $III$, we use $\nabla K_{ij} \in W^{1,\infty}$ for $i,j=1,\dots,N$ with $i \neq j$ to obtain
\[
\begin{aligned}
III \leq & \frac1\e \intr \rho^\e_i (u^\e_i - u_i)  \cdot \nabla K_{ii} *(\rho_i - \rho^\e_i) \,dx  \cr
&\quad + \frac{1}{\e}\bigg( \intdd \abs{u_i-v}^2 f^\e_i\,dx\,dv \bigg)^{1/2}\sum_{j\neq i}\norm{\nabla K_{ij}}_{W^{1,\infty}} W_1(\rho_j,\rho_j^\e)\cr
\leq & \frac1\e \intr \rho^\e_i (u^\e_i - u_i)  \cdot \nabla K_{ii} *(\rho_i - \rho^\e_i) \,dx + \frac{c_K}{2\e} \intdd \abs{u_i-v}^2 f^\e_i\,dx\,dv \cr
&\quad + \frac{c_K}{2\e} \sum_{j\neq i}W_1^2(\rho_j,\rho_j^\e),
\end{aligned}
\]
where 
\[
c_K \coloneqq \max_{i=1,\dots,N}\sum_{j\neq i}\norm{\nabla K_{ij}}_{W^{1,\infty}}.
\]
We then apply Theorem \ref{thm_mod} and Lemma \ref{lem_dbl} to deduce
\[
\begin{aligned}
&\calE_{K}(f^\e_i|  \rho_i, u_i)  + \frac1\e\bigg(1 -  \max_{i=1,\dots,N}\norm{\nabla u_i}_{L^\infty} \e - 1 - \frac{c_K}{2}  \bigg)\int_0^t \intdd  \abs{u_i-v}^2 f^\e_i\,dx\,dv\,ds  \cr
&\quad \leq  \calE_K(f^\e_{i0}| \rho_{i0}, u_{i0}) + \frac{Cc_K}{2\e} \sum_{j\neq i}W_1^2(\rho_{j0},\rho^\e_{j0}) + C\e \cr
&\qquad + \frac C{\e}\sum_{i=1}^N\int_0^t\intr (\rho_i - \rho^\e_i )K_{ii} * (\rho_i - \rho^\e_i)\,dx\,ds + \frac{Cc_K}{2\e}\sum_{j\neq i}\int_0^t \intdd \abs{u_j - v}^2 f^\e_j\,dx\,dv\,ds,
\end{aligned}
\]
where $C>0$ depends only on $u_1,\dots,u_N$, and $T$, but independent of $\e>0$. 
We now sum over $i=1,\dots,N$, apply Gr\"onwall's lemma to have
\[
\begin{aligned}
&\sum_{i=1}^N\calE_{K}(f^\e_i| \rho_i, u_i) + \frac1\e\sum_{i=1}^N\int_0^t \intdd  \abs{u_i-v}^2 f^\e_i\,dx\,dv\,ds \cr
&\quad   \leq c_0\sum_{i=1}^N\calE_{K}(f^\e_{i0}| \rho_{i0}, u_{i0}) + \frac{c_0}{\e}\sum_{i=1}^N W_1^2(\rho_{i0},\rho^\e_{i0}),
\end{aligned}
\]
where $c_0 > 0$ is independent of $\e > 0$. We finally use \eqref{ini_cond_sing0}, \eqref{ini_cond_sing1}, and \cite[Lemma 4.2]{CJpre} to conclude our desired result.
\end{proof}

\section*{Acknowledgments}

The research of YPC is supported by NRF grant no. 2022R1A2C1002820.
The research of SF is  supported by the Ministry of University and Research (MIUR), Italy under the grant PRIN 2020- Project N. 20204NT8W4, Nonlinear Evolutions PDEs, fluid dynamics and transport equations: theoretical foundations and applications. VI and SF are supported by the ``MMEAN-FIELDSS'' INdAM project N.E53C22001930001 , and by the InterMaths Network, \url{www.intermaths.eu}.

\bibliographystyle{plain}

\end{document}